%

\documentclass[preprint]{imsart}




\startlocaldefs
\usepackage{t1enc}

\usepackage[latin2]{inputenc}
\usepackage{amssymb}
\usepackage{amsmath,amsthm}
\usepackage{color}
\usepackage{mathrsfs}
\usepackage{latexsym}
\usepackage{epsfig}
\usepackage[a4paper,left=4cm,right=3cm,top=3cm,bottom=3cm]{geometry}
\usepackage{subfigure}
\usepackage{hyperref}

\long\def\beginskip#1\endskip{}
\def\endskip{}
\newcommand{\diamm}{\text{diam$_n$}}
\newcommand{\diam}{\mathop{\rm diam}\nolimits}
\newcommand{\ra}{\rightarrow}
\newcommand{\da}{\downarrow}

\renewcommand{\k}{\kappa}

\newcommand{\E}{\mathord{\rm E}}
\newcommand{\Var}{\mathop{\rm var}\nolimits}
\let\var\Var

\newcommand{\eps}{\varepsilon}

\newcommand{\RR}{\mathbb{R}}

\renewcommand{\phi}{\varphi}

\newcommand{\given}{\,|\,}
\renewcommand{\l}{\lambda}

\renewcommand{\th}{\theta}
\renewcommand{\t}{\tau}
\newcommand{\z}{\zeta}
\newcommand{\e}{\varepsilon}

\renewcommand{\d}{\delta}

\renewcommand{\a}{\alpha}
\newcommand{\argmax}{\mathop{\rm argmax}}
\newcommand{\argmin}{\mathop{\rm argmin}}
\newcommand{\loglog}{\mathop{\rm loglog}\nolimits}

\newcommand{\simple}{\widehat\tau_S}
\newcommand{\mmle}{\widehat\tau_M}
\newcommand{\1}{\textbf{1}}

\newtheorem{theorem}{Theorem}[section]

\newtheorem{proposition}[theorem]{Proposition}
\newtheorem{lemma}[theorem]{Lemma}

\theoremstyle{definition}

\newtheorem{ex}[theorem]{Example}

\newtheorem{cond}{Condition}

\numberwithin{equation}{section}

\graphicspath{{./Figures/}}

\endlocaldefs

\begin{document}

\begin{frontmatter}

\title{Adaptive posterior contraction rates for the horseshoe}
\runtitle{Adaptive contraction for the horseshoe}


\author{\fnms{St\'ephanie} \snm{van der Pas}\ead[label=e1]{svdpas@math.leidenuniv.nl}\thanksref{t1,m1}},
\author{\fnms{Botond} \snm{Szab\'o}\ead[label=e2]{b.t.szabo@math.leidenuniv.nl}\thanksref{t1,t2,m1,m2}}
\and
\author{\fnms{Aad} \snm{van der Vaart}\ead[label=e3]{avdvaart@math.leidenuniv.nl}\thanksref{t2,m1}}
\address{Leiden University\thanksmark{m1} 
and Budapest University of Technology and Economics\thanksmark{m2}\\
\printead{e1}\\ \printead{e2}\\\printead{e3}}

\thankstext{t1}{Research supported by the Netherlands Organization for Scientific Research.}
\thankstext{t2}{The research leading to these results has received funding from the European
  Research Council under ERC Grant Agreement 320637.}

\runauthor{van der Pas et al.}

\begin{abstract}
We investigate the frequentist properties of  Bayesian procedures for estimation based on the horseshoe prior in the sparse multivariate normal means model. Previous theoretical results assumed that the sparsity level, that is, the number of signals, was known. We drop this assumption and characterize the behavior of the maximum marginal likelihood estimator (MMLE) of a key parameter of the horseshoe prior. We prove that the MMLE is an effective estimator of the sparsity level, in the sense that it leads to (near) minimax optimal estimation of the underlying mean vector generating the data. Besides this empirical Bayes procedure, we consider the hierarchical Bayes method of putting a prior on the unknown sparsity level as well. We show that both Bayesian techniques
lead to rate-adaptive optimal posterior contraction, which implies that the horseshoe posterior is a good candidate for generating rate-adaptive credible sets.
\end{abstract}

\begin{keyword}[class=AMS]
\kwd[Primary ]{62G15}
\kwd[; secondary ]{62F15}
\end{keyword}

\begin{keyword}
\kwd{horseshoe}
\kwd{sparsity}
\kwd{nearly black vectors}
\kwd{normal means problem}
\kwd{adaptive inference}
\kwd{frequentist Bayes}
\end{keyword}

\end{frontmatter}

\section{Introduction}
The rise of big datasets with few signals, such as gene expression data and astronomical images, has given an impulse to the study of sparse models. The sequence model, or sparse normal means problem, is  well studied. In this model, a random
vector $Y^n = (Y_1, \ldots, Y_n)$ with values in $\mathbb{R}^n$ is observed, and 
each single observation $Y_i$ is the sum of a fixed mean $\theta_{0,i}$ and standard normal noise $\varepsilon_i$:
\begin{equation}
\label{EqObservation}
Y_i = \th_{0,i} + \varepsilon_i, \quad i = 1, \ldots, n.
\end{equation}
We perform inference on the mean vector $\th_0
=(\th_{0,1},\ldots,\th_{0,n})$, and assume it to be sparse in the nearly black sense, 
meaning that all except an unknown number $p_n = \sum_{i=1}^n\1\{\th_{0,i} \neq 0\}$ of the means are zero. 
We assume that $p_n$ increases with $n$, but not as fast as $n$: $p_n \to \infty$ and $p_n/n \to 0$ as $n$ tends to infinity. 

Many methods to recover $\th_0$ have been suggested. Those most directly related
to this work are \cite{Tibshirani1996, Johnstone2004, Castillo2012,
  Castillo2015, Jiang2009,Griffin2010, Johnson2010, Ghosh2015, Caron2008, Bhattacharya2014, Bhadra2015,
  Rockova2015}. In the present paper we study the Bayesian method based on the \emph{horseshoe prior}
\cite{Carvalho2010, Carvalho2009, Scott2011, Polson2012, Polson2012-2}. Under this prior the
coordinates $\th_1,\ldots, \th_n$ are an i.i.d.\ sample from a scale mixture of normals with a half-Cauchy prior on the
variance, as follows. Given a ``global hyperparameter'' $\t$,
\begin{equation}
\begin{aligned}
\th_i \given \l_i, \t &\sim \mathcal{N}(0,\l_i^2\t^2),\\
\l_i &\sim C^+(0,1), \qquad i = 1, \ldots, n.
\end{aligned}
\label{EqHorseShoePrior}
\end{equation}
In the Bayesian model the observations $Y_i$  follow \eqref{EqObservation} with $\th_0$ taken equal to $\th$.
The \emph{posterior distribution} is then as usual obtained as the conditional distribution of $\th$ given $Y^n$.
For a given value of $\t$, possibly determined by an empirical Bayes method, 
aspects of the posterior distribution of $\th$, such as its mean and variance, can be computed with the help
of analytic formulas and numerical integration \cite{Polson2012, Polson2012-2, vdPas}. 
It is also possible to equip $\t$ with a hyper prior, and follow a hierarchical, full Bayes approach.
Several MCMC samplers and  software packages are available for 
computation of the posterior distribution \cite{Scott2010-2, Makalic2015,  Gramacy2014, horseshoepackage, fasthorseshoe}. 

The horseshoe posterior has performed well in simulations \cite{Carvalho2010, Carvalho2009, Polson2012,
  Polson2010, Bhattacharya2014, Armagan2013}. 
Theoretical investigation in \cite{vdPas} shows that the parameter $\t$ can, up to a logarithmic factor, 
be interpreted as the fraction of nonzero parameters $\th_i$. 
In particular, if $\t$ is chosen to be at most of the order $(p_n/n)\sqrt{\log{n/p_n}}$, 
then the horseshoe posterior contracts to the true parameter at the (near) minimax rate of recovery for quadratic loss 
over sparse models  \cite{vdPas}. While motivated by these good properties,
we also believe that the results obtained give insight in the performance of Bayesian procedures
for sparsity in general.

In the present paper we make three novel contributions. First and second we establish the contraction rates of the
posterior distributions of $\th$ in the hierarchical, full Bayes case and in the general empirical Bayes case. Third
we study the particular empirical Bayes method of estimating $\t$ by the method of maximum  Bayesian marginal likelihood. 

As the parameter $\t$ can be viewed as measuring sparsity, the first two contributions
are both focused on adaptation to the number $p_n$ of nonzero means, which is unlikely to
be known in practice. The hierarchical and empirical Bayes  methods
studied here are shown to have similar performance, both in theory and in a small simulation study,
and appear to outperform the ad-hoc estimator introduced in \cite{vdPas}. The horseshoe posterior attains similar contraction 
rates as the spike-and-slab priors, as obtained in \cite{Johnstone2004,Castillo2012, Castillo2015}, 
and two-component mixtures, as in \cite{Rockova2015}.
We obtain these results under general conditions on the hyper prior on $\t$, and for general empirical Bayes
methods.

The conditions for the empirical Bayes method are met in particular by the 
maximum marginal likelihood estimator (MMLE). This is the maximum likelihood estimator
of $\t$ under the assumption that the ``prior'' \eqref{EqHorseShoePrior} is part of the data-generating
model, leaving only $\t$ as a parameter. The MMLE is a natural estimator and is easy to compute.
It turns out that the ``MMLE plug-in posterior distribution'' closely mimics the hierarchical Bayes posterior distribution, as has been observed in other settings  \cite{SzVZ,rousseau:szabo:2015}. 
Besides practical benefit, this correspondence provides 
a theoretical tool to analyze the hierarchical Bayes method, which need
not rely on testing arguments (as in \cite{Ghosal2000,GhosalLemberandvanderVaart(2008),vdVvZGamma}).

In the Bayesian framework the spread of the posterior distribution
over the parameter space is used as an indication of the error in estimation.
For instance, a set of prescribed posterior probability around the center of the posterior distribution
(a credible set) is often used in the same way as a confidence region for the parameter. In the follow-up paper \cite{coveragepaper}, we investigate the coverage properties and sizes of the adaptive credible balls and marginal credible intervals.

The paper is organized as follows. We first introduce the MMLE in Section~\ref{sec:MMLE}. Next we
present contraction rates  in Section~\ref{sec:contraction}, for general empirical and hierarchical Bayes approaches, 
and specifically for the MMLE. We illustrate the results in Section~\ref{sec:simulation}. 
We conclude with appendices containing all proofs not given in the main text.

\subsection{Notation} 
We use $\Pi(\cdot\given Y^n,\t)$ for the posterior distribution of $\th$ relative to the
prior \eqref{EqHorseShoePrior} given fixed $\t$, and $\Pi(\cdot\given Y^n)$ for the posterior distribution
in the hierarchical setup where $\t$ has received a prior.
The empirical Bayes ``plug-in posterior'' is the first object with a data-based variable $\widehat\t_n$ substituted for $\t$. 
In order to stress that this does not entail conditioning on $\widehat\t_n$, we also write 
$\Pi_\t(\cdot\given Y^n)$ for $\Pi(\cdot\given Y^n,\t)$, and then $\Pi_{\widehat \t_n}(\cdot\given Y^n)$ is the
empirical Bayes (or plug-in) posterior distribution.

The density of the standard normal distribution is denoted by $\phi$. Furthermore,
$\ell_0[p] = \{\th \in \mathbb{R}^n : \sum_{i=1}^n\1\{\th_{i} \neq 0\} \leq p\}$
denotes the class of nearly black vectors, and we abbreviate
$$\z_\t = \sqrt{2\log(1/\t)}, \qquad \t_n(p) = (p/n)\sqrt{\log (n/p)}, \qquad \t_n= \t_n(p_n).$$

\section{Maximum marginal likelihood estimator} 
\label{sec:MMLE}
In this Section we define the MMLE and compare it to a naive empirical Bayes estimator
previously suggested in \cite{vdPas}. In Section~\ref{sec:EBcontract},
we show that the MMLE is close to the ``optimal'' value  $\t_n(p_n) = (p_n/n)\sqrt{\log(n/p_n)}$ with high
probability, and leads to posterior contraction at the near-minimax rate.

The marginal prior density of a parameter $\th_i$ in the model \eqref{EqHorseShoePrior}
is given by 
\begin{equation}
g_{\t}(\th)=\int_0^{\infty}\phi\left(\frac{\th}{\l\t}\right)\frac{1}{\l\t}\frac{2}{\pi(1+\l^2)}\, d\l.
\label{EqPriorDensityTheta}
\end{equation}
In the Bayesian model the observations $Y_i$ are distributed according to the convolution of this density
and the standard normal density. The MMLE is the maximum likelihood estimator of $\t$ in this latter model, given by
\begin{align}\label{def: MMLE_HS}
\mmle =  \argmax_{\t\in \left[{1}/{n},1\right]} \prod_{i=1}^n\int_{-\infty}^{\infty} \phi(y_i-\th)g_{\t}(\th)\,d\th.
\end{align}
The restriction of the MMLE to the interval $[1/n, 1]$ can be motivated by the
interpretation of $\t$ as the level of sparsity, as in \cite{vdPas}, which makes the interval
correspond to assuming that at least one and at most all  parameters are nonzero. The lower bound of $1/n$ has the additional
advantage of preventing computational issues that arise when $\t$ is very small
(\cite{vdPas,Datta2013}). We found the observation in \cite{Datta2013} that an empirical Bayes approach cannot replace a
hierarchical Bayes one, because the estimate of $\t$ tends to be too small, too general.
In both our theoretical study as in our simulation results the restriction that the MMLE 
be at least $1/n$ prevents a collapse to zero. Our simulations, presented in
Section~\ref{sec:simulation}, also give no reason to believe that the hierarchical Bayes method is inherently
better than empirical Bayes. Indeed, they behave very similarly (depending on the prior on $\t$).

The MMLE requires one-dimensional maximization and is thus easily computed. The behavior of the quantity to be maximized in \eqref{def: MMLE_HS} and the MMLE itself is illustrated in Figure \ref{fig:MMLEComp}. A function for computation is available in the R package 'horseshoe' (\cite{horseshoepackage}). 

\begin{figure}
\subfigure{{\includegraphics[width=0.45\textwidth]{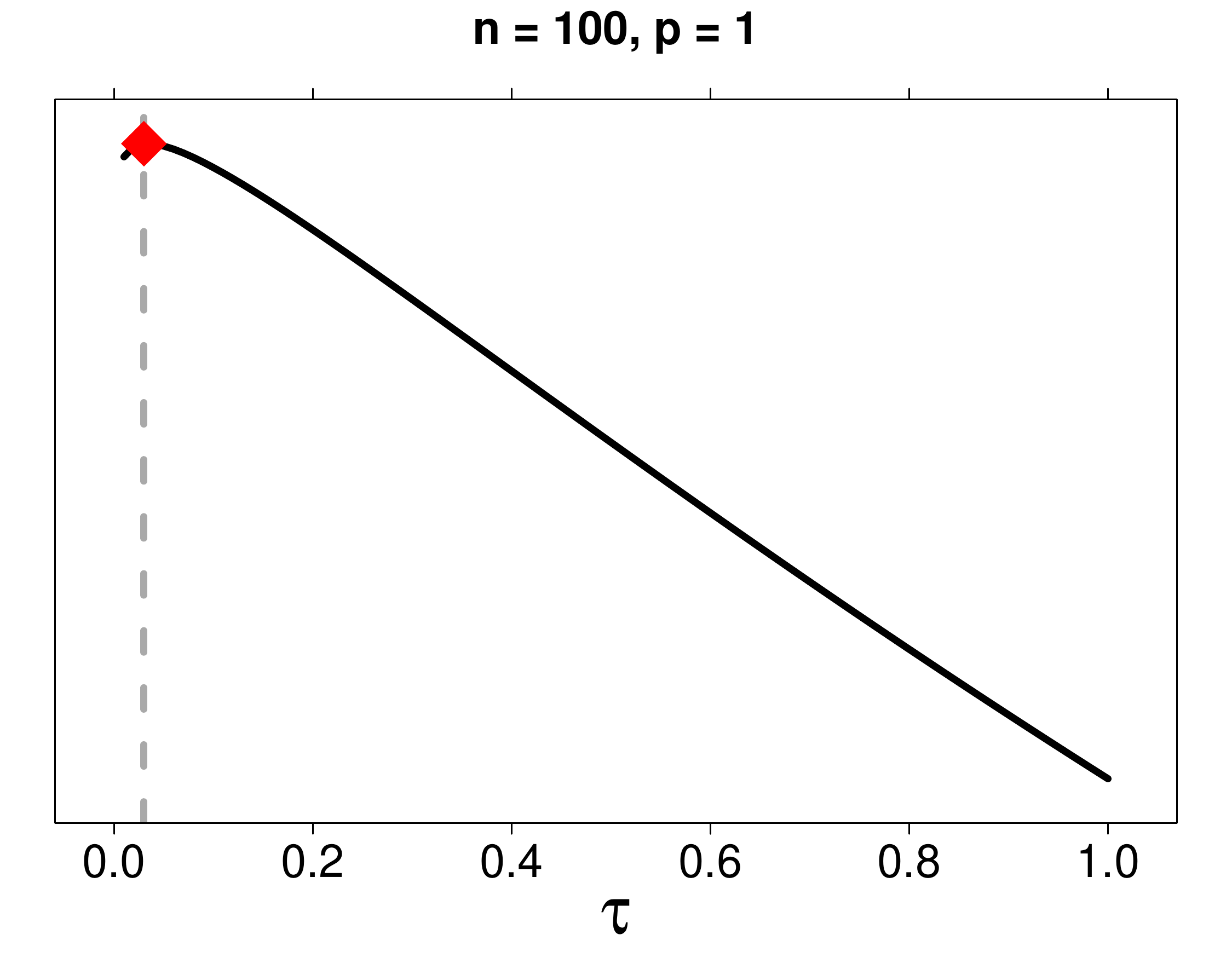} }}%
    \subfigure{{\includegraphics[width=0.45\textwidth]{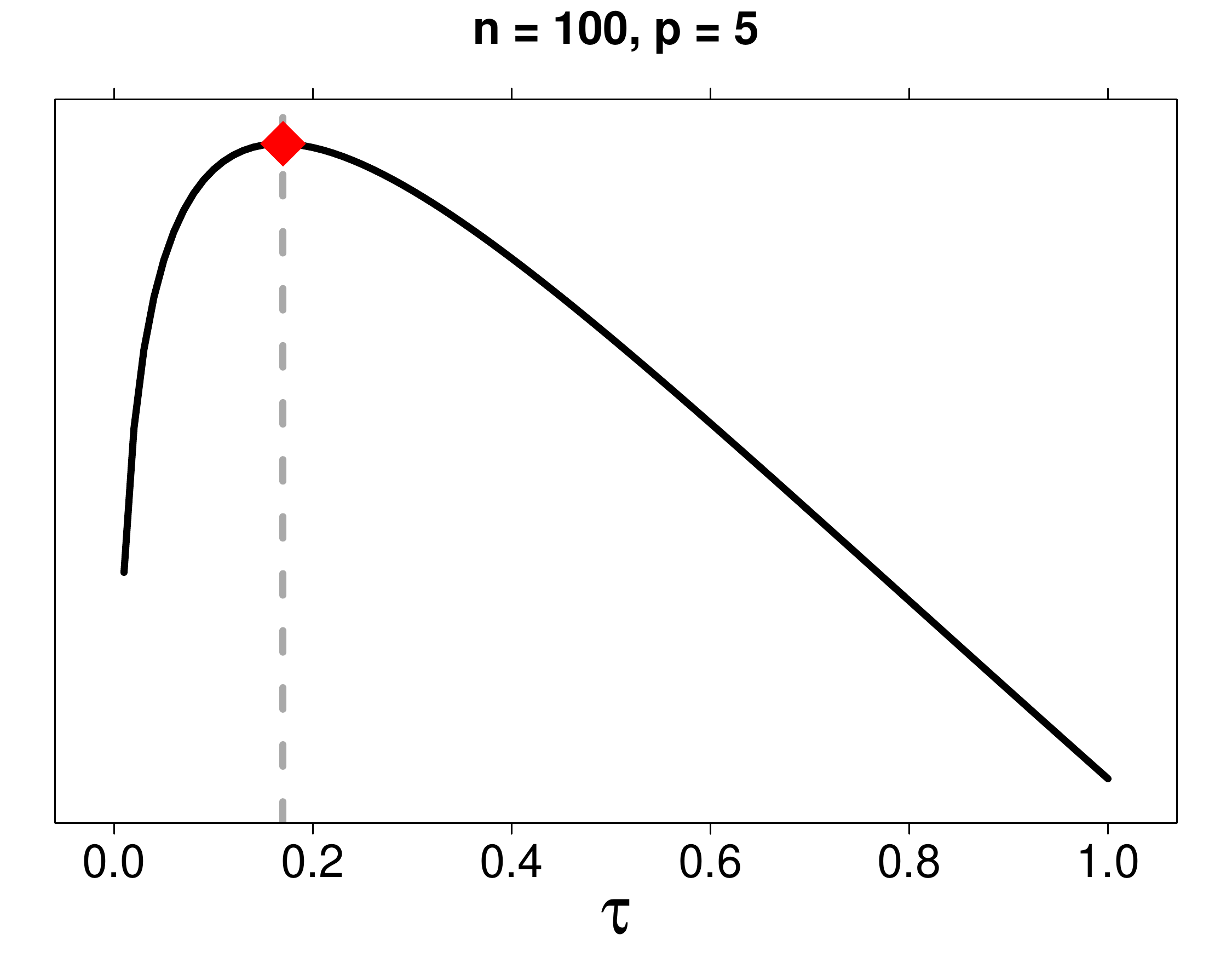} }}\\%
    \subfigure{{\includegraphics[width=0.45\textwidth]{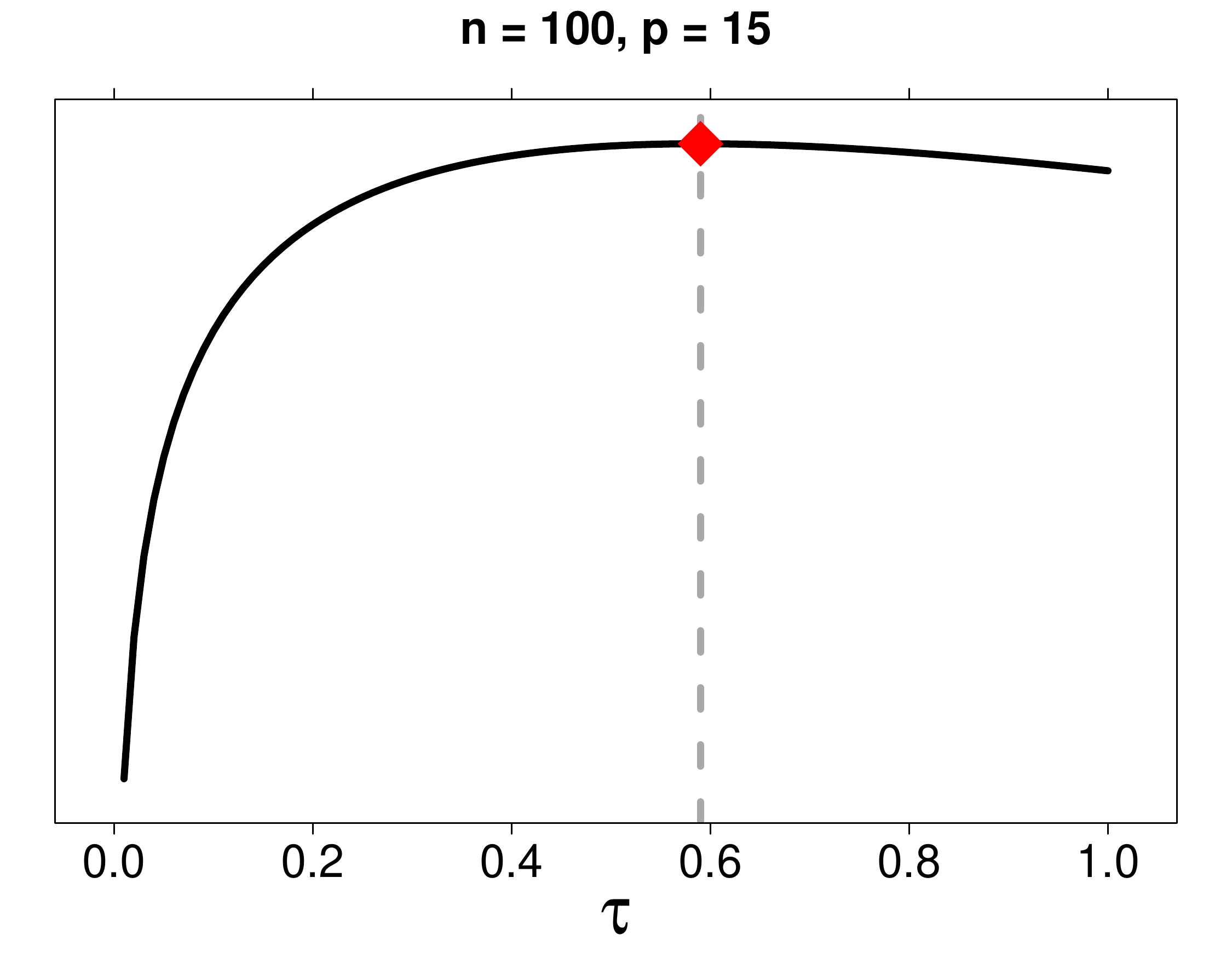} }}%
    \subfigure{{\includegraphics[width=0.45\textwidth]{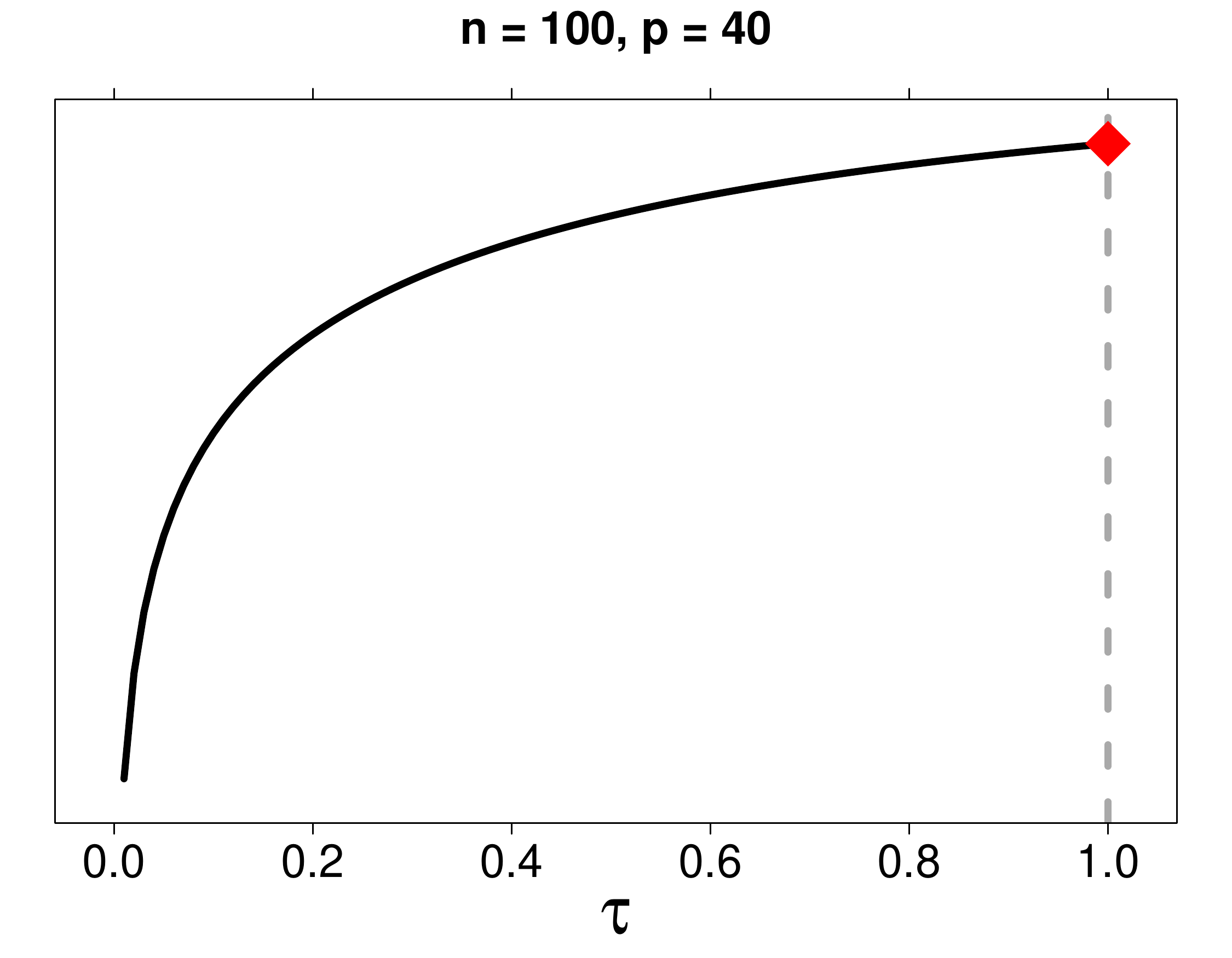} }}%
    \caption{Logarithm of the quantity to be maximized in \eqref{def: MMLE_HS}. The red dot indicates the location of the MMLE. Each plot was made using a single simulated data set consisting of 100 observations each. From left to right, top to bottom, there are 1, 5, 15 or 40 means equal to 10; the remaining means are equal to zero.}
    \label{fig:MMLEComp}
\end{figure}

An interpretation of $\t$ as the fraction of nonzero coordinates motivates
another estimator (\cite{vdPas}), which is based on a count of the number of observations 
that exceed the ``universal threshold'' $\sqrt{2\log n}$:
\begin{equation}\label{eq:def.simple.estimator}
\simple(c_1, c_2) = \max\left\{ \frac{\sum_{i=1}^n \1\{|y_i| \geq \sqrt{c_1\log n} \}}{c_2 n}, \frac{1}{n}\right\},
\end{equation}
where $c_1$ and $c_2$ are positive constants. 
If $c_2 > 1$ and ($c_1 > 2$ or  $c_1 = 2$ and $p_n \gtrsim \log{n}$), then the plug-in posterior distribution with 
the \emph{simple estimator} $\simple(c_1,c_2)$ contracts at the near square minimax rate $p_n\log n$ (see \cite{vdPas}, Section~4).
This also follows from Theorem~\ref{thm:eb_contract} in the present paper, 
as $\simple(c_1,c_2)$ satisfies Condition~\ref{cond.eb} below.

In \cite{vdPas}, it was observed that the simple estimator is prone to underestimation of the sparsity level if signals are smaller than the universal threshold. This is corroborated by the numerical study presented in Figure~\ref{fig:compare.estimators}.
The figure shows approximations to the expected values of $\simple$ and $\mmle$ when $\th_0$ is a vector of length
$n = 100$, with $p_n$ coordinates drawn from a $\mathcal{N}(A, 1)$ distribution, with $A \in \{1, 4, 7\}$,
and the remaining coordinates drawn from a $\mathcal{N}(0, 1/4)$ distribution. For this sample size the
``universal threshold''  $\sqrt{2\log n}$  is approximately 3, and thus signals with $A = 1$ should be
difficult to detect, whereas those with $A = 7$ should be easy;  those with $A = 4$ represent a
boundary case.

\begin{figure}[h]
\includegraphics[width = 0.7\textwidth]{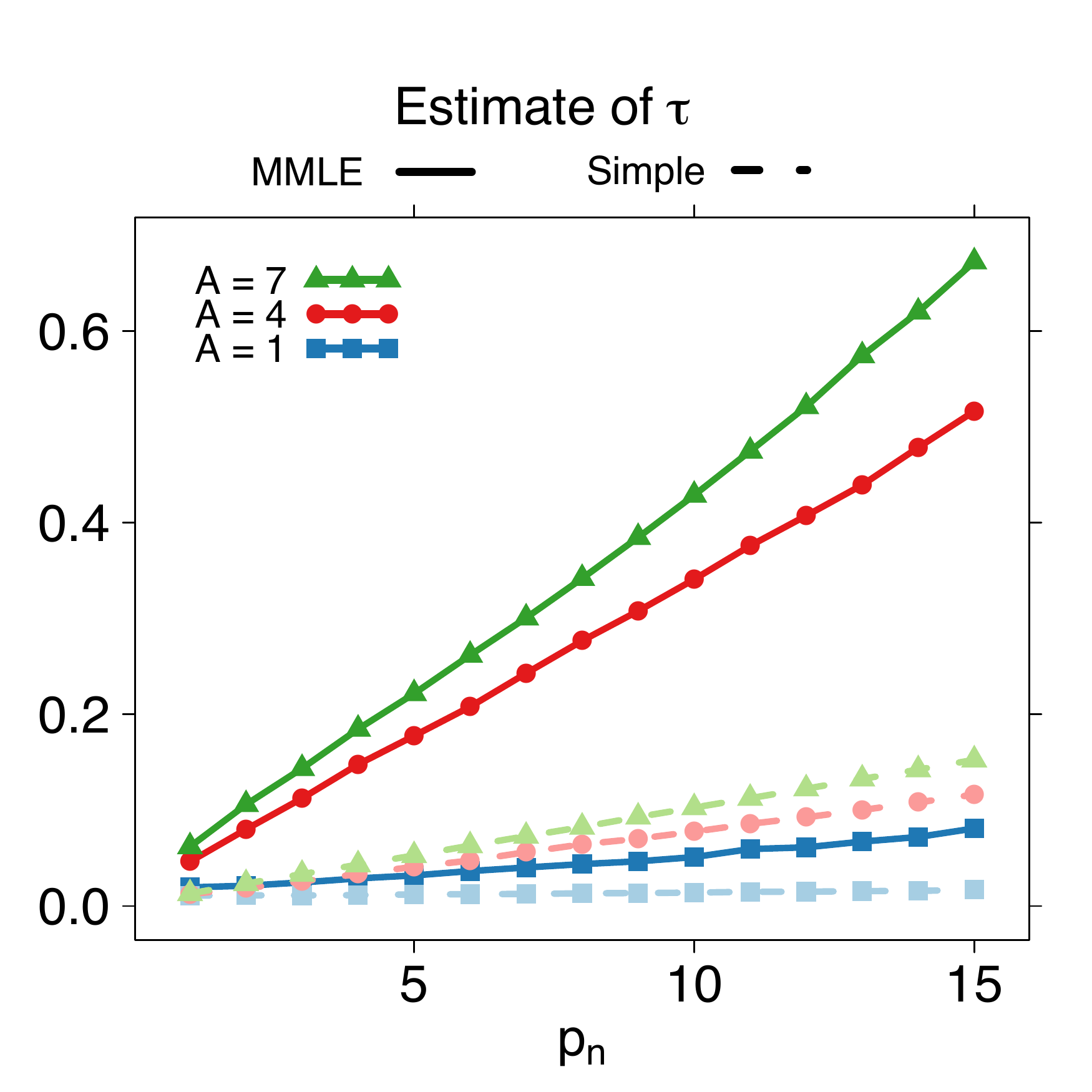}
\caption{Approximate expected values of the MMLE \eqref{def: MMLE_HS} 
(solid) and the simple estimator \eqref{eq:def.simple.estimator}
with $c_1=2$ and $c_2=1$ (dotted) when $p_n$ (horizontal axis) 
out of $n = 100$ parameters are drawn from a $\mathcal{N}(A, 1)$ distribution, 
and the remaining $(n-p_n)$ parameters from a $\mathcal{N}(0, 1/4)$ distribution. 
The study was conducted with $A = 1$ ($\blacksquare$), $A = 4$ ($\bullet$) and $A = 7$ ($\blacktriangle$). 
The results as shown are the averages over $N = 1000$ replications.}
\label{fig:compare.estimators}
\end{figure} 

The figure shows that in all cases the MMLE \eqref{def: MMLE_HS}  yields larger estimates of $\t$ than the simple estimator
\eqref{eq:def.simple.estimator}, and thus leads to less shrinkage. This is expected in light of the results in the following section, 
which show that the MMLE is of order $\t_n(p_n)$, whereas the simple estimator is capped at $p_n/n$. 
Both estimators appear to be linear in the number of nonzero coordinates of $\th_0$, with different slopes. 
When the signals are below the universal threshold, then the simple estimator is unlikely to detect 
any of them, whereas the MMLE may still pick up some of the signals. 
We study the consequences of this for the mean square errors in Section~\ref{sec:simulation}.

\section{Contraction rates}
\label{sec:contraction}
In this section we establish the rate of contraction 
of both the empirical Bayes and full Bayes posterior distributions.   The
\emph{empirical Bayes posterior} is found by replacing $\t$
in the posterior distribution $\Pi(\cdot\given Y^n,\t)$ of $\th$ relative to the prior
\eqref{EqHorseShoePrior} with a given $\t$ by a data-based estimator $\widehat\t_n$; we denote
this by $\Pi_{\widehat\t_n}(\cdot\given Y^n)$. The
\emph{full Bayes posterior} $\Pi(\cdot\given Y^n)$ is the ordinary posterior distribution of $\th$ in the
model where $\t$ is also equipped with a prior and \eqref{EqHorseShoePrior} is interpreted as the
conditional prior of $\th$ given $\t$.

The rate of contraction refers to properties of these posterior distributions when the vector
$Y^n$ follows a normal distribution on $\RR^n$ with mean $\th_0$ and covariance the identity.
We give general conditions on the empirical Bayes estimator $\widehat\t_n$ and the hyper prior on $\t$ that ensure that
the square posterior rate of contraction to $\th_0$ of the resulting posterior distributions
is the near minimax rate $p_n\log n$ for estimation of $\th_0$ relative to the Euclidean norm. 
We also show that these conditions are met by the MMLE and natural hyper priors on $\t$.

The minimax rate, the usual criterion for point estimators, has proven to be a useful benchmark
for the speed of contraction of posterior distributions as well. The posterior cannot contract faster to the truth
than at the minimax rate \cite{Ghosal2000}. The square minimax
$\ell_2$-rate for the sparse normal means problem is $p_n\log(n/p_n)$ \cite{Donoho1992}. This
is slightly faster (i.e.\ smaller) than $p_n \log n$, 
but equivalent if the true parameter vector is not very sparse (if $p_n\le n^\alpha$, for some
$\a<1$, then $(1-\alpha )p_n\log n\le p_n\log (n/p_n)\le p_n\log n$).
For adaptive procedures, where the number of nonzero means $p_n$ is unknown, results are usually given
in terms of the ``near-minimax rate''  $p_n\log n$, for example for the spike-and-slab Lasso
\cite{Rockova2015}, the Lasso \cite{Bickel2009}, and the horseshoe \cite{vdPas}.

\subsection{Empirical Bayes}
\label{sec:EBcontract}
The empirical Bayes posterior distribution achieves the near-minimax contraction rate
provided that the estimator $\widehat\t_n$ of $\t$ satisfies the following condition. 
Let $\t_n(p)=(p/n)\sqrt{\log (n/p)}$.
 
\begin{cond}\label{cond.eb}
There exists a constant $C > 0$ such that $\widehat\t_n\in[1/n, C\t_n(p_n)]$, with $P_{\th_0}$-probability tending to one,
uniformly in $\th_0\in\ell_0[p_n]$. 
\end{cond}

This condition is weaker than the condition given in  \cite{vdPas} for $\ell_2$-adaptation of the empirical
Bayes posterior mean, which requires asymptotic concentration of $\widehat\t_n$ on the same interval
$[1/n, C\t_n(p_n)]$ but at a rate. In
\cite{vdPas} a plug-in value for $\t$ of order $\t_n(p_n)$ was found to be 
the largest value of $\t$ for which the posterior distribution contracts at the minimax-rate, and has 
variance of the same order. Condition \ref{cond.eb} can  be interpreted
as ensuring that $\widehat\t_n$ is of at most this ``optimal'' order. The lower bound can be interpreted
as assuming that there is at least one nonzero mean, which is reasonable in light of the
assumption $p_n \rightarrow \infty$.  In addition, it prevents computational issues, as discussed in 
Section~\ref{sec:MMLE}.

A main result of the present paper is that the MMLE satisfies Condition~\ref{cond.eb}.

\begin{theorem}\label{thm: UB_tau}
The MMLE \eqref{def: MMLE_HS}  satisfies Condition~\ref{cond.eb}.
\end{theorem}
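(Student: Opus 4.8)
The plan is to exploit that the MMLE is an argmax over the compact interval $[1/n,1]$, so that the lower bound $\mmle\ge 1/n$ in Condition~\ref{cond.eb} holds by construction and only the upper bound $\mmle\le C\t_n(p_n)$ needs proof. Writing $M_n(\t)=\sum_{i=1}^n\log f_\t(Y_i)$ for the log marginal likelihood, with $f_\t=\phi*g_\t$ the density in \eqref{EqPriorDensityTheta} convolved with the standard normal density, it suffices to show that, with $P_{\th_0}$-probability tending to one uniformly over $\th_0\in\ell_0[p_n]$, one has $M_n(\t)<M_n(\t_*)$ for every $\t\in(C\t_n(p_n),1]$, where $\t_*=\t_n(p_n)$ is a fixed feasible reference value (note $\t_n(p_n)\in[1/n,C\t_n(p_n)]$ for large $n$, since $p_n\to\infty$). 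Then the maximiser cannot lie in $(C\t_n(p_n),1]$.

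First I would split the increment $M_n(\t)-M_n(\t_*)=\sum_i\log\bigl(f_\t(Y_i)/f_{\t_*}(Y_i)\bigr)$ according to whether $\th_{0,i}=0$ (noise, $n-p_n\asymp n$ terms) or $\th_{0,i}\neq 0$ (signal, at most $p_n$ terms), and control each group through its mean and its fluctuation. The core analytic input is a sharp description of $f_\t$ in the two regimes separated by the threshold $\z_\t=\sqrt{2\log(1/\t)}$: for $|y|\lesssim\z_\t$ the Gaussian bulk dominates and $f_\t(y)\asymp\phi(y)$, whereas for $|y|\gtrsim\z_\t$ the heavy Cauchy-type tail of $g_\t$ takes over and $f_\t(y)\asymp\t/y^2\gg\phi(y)$. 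Using this, I would prove (i) a negative-drift bound for the noise coordinates, $-\EE_0\log\bigl(f_\t(Y)/f_{\t_*}(Y)\bigr)\gtrsim\t\,\z_\t$ for $\t\gtrsim\t_n(p_n)$, so that the noise terms contribute a mean at most $-c\,n\,\t\,\z_\t$; and (ii) a uniform upper bound on the per-signal gain, $\sup_{\t\le 1}\log\bigl(f_\t(y)/f_{\t_*}(y)\bigr)\lesssim\log(1/\t_*)\asymp\log(n/p_n)$, valid for every $y\in\RR$ and hence for every signal value, so that the signal group contributes a mean at most $C'\,p_n\log(n/p_n)$. Since $n\,\t\,\z_\t$ exceeds $p_n\log(n/p_n)$ once $\t>C\t_n(p_n)$ with $C$ large, the expected increment is strictly negative with a margin of order $n\t\z_\t$.

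It remains to control the stochastic fluctuations around these means, uniformly in $\t$. I would bound the variance of the per-noise summand $\log\bigl(f_{\t_*}(Y)/f_\t(Y)\bigr)$, which by the bulk/tail dichotomy is of order $\t$ up to logarithmic factors, so that the centred noise sum has standard deviation $\lesssim\sqrt{n\t}\,\mathrm{polylog}$, negligible against the drift $n\t\z_\t$ whenever $n\t\ge Cp_n\sqrt{\log(n/p_n)}\to\infty$; the signal sum is handled similarly using the uniform bound from (ii). To upgrade pointwise control into control of the supremum over $(C\t_n(p_n),1]$ I would discretise $\t$ on a fine (polynomially spaced) grid, apply an exponential tail inequality with a union bound, and pass to the continuum using monotonicity and Lipschitz continuity of $\t\mapsto\log f_\t(y)$ in $\log\t$. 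All bounds are uniform over $\th_0\in\ell_0[p_n]$, because the noise contribution does not depend on $\th_0$ and the signal bounds hold for arbitrary signal magnitudes and at most $p_n$ signals.

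The main obstacle I anticipate is establishing steps (i)--(ii): obtaining two-sided estimates on $f_\t$ and on the increment $\log\bigl(f_\t/f_{\t_*}\bigr)$ with the correct $\sqrt{\log}$ factors. The perturbative expansion in $\t$ breaks down precisely in the tail region $|y|\gtrsim\z_\t$, where the half-Cauchy scale mixture makes $f_\t$ far heavier than $\phi$ and renders naive chi-square-type second-moment bounds infinite; the delicate cancellation between bulk and tail is what produces a penalty of order $\t\z_\t$ rather than $\t$, and hence the threshold $\t_n(p_n)=(p_n/n)\sqrt{\log(n/p_n)}$ rather than $p_n/n$. Making this cancellation rigorous, uniformly in $\t\in[C\t_n(p_n),1]$ and in the signal configuration, is the crux of the argument.
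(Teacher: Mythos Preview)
Your strategy of comparing $M_n(\t)$ against $M_n(\t_*)$ for the reference $\t_*=\t_n(p_n)$ is different from the paper's. The paper instead shows that the \emph{derivative} $(d/d\t)M_\t(Y^n)=\t^{-1}\sum_i m_\t(Y_i)$ is negative, with probability tending to one, for all $\t$ above a multiple of $\t_n(p_n)$, whence the maximiser lies to the left. The two ingredients are the uniform bound $m_\t\le C_u$ (Lemma~\ref{lem: E_nonzero_m(Y)}), which caps the signal contribution to the score at $p_nC_u/\t$, and the asymptotic $\E_0 m_\t(Y)\sim -c\,\t/\z_\t$ (Proposition~\ref{prop:Em(Y)}), which makes the noise score about $-cn/\z_\t$; the balance $n/\z_\t\asymp p_n/\t$ gives $\t\asymp\t_n(p_n)$.

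Your key estimate (i) has the wrong order. Integrating the score,
\[
-\E_0\log\frac{f_\t(Y)}{f_{\t_*}(Y)}=\int_{\t_*}^\t\frac{-\E_0 m_s(Y)}{s}\,ds\ \asymp\ \int_{\t_*}^\t\frac{ds}{\z_s}\ \in\ \Bigl[\frac{\t-\t_*}{\z_{\t_*}},\,\frac{\t-\t_*}{\z_\t}\Bigr],
\]
so the noise penalty per coordinate is of order $\t/\z_\t$, not $\t\z_\t$; the bulk/tail cancellation you allude to makes the penalty \emph{smaller} than a naive $O(\t)$, not larger. With the correct drift $n\t/\z_\t$ and your signal bound (ii) of $p_n\log(1/\t_*)$, the balance only yields $\mmle\lesssim\t_n(p_n)\log(n/p_n)$, which does not verify Condition~\ref{cond.eb}. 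As written, the overestimate in (i) by a factor $\z_\t^2$ and the slack in (ii) (using $\log(1/\t_*)$ rather than the sharper $\log(\t/\t_*)$) happen to cancel and produce the right threshold, but this is an artefact, not a proof.

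The integrated route can be salvaged if you simultaneously sharpen (ii) to $\sup_y\log\bigl(f_\t(y)/f_{\t_*}(y)\bigr)\le C_u\log(\t/\t_*)$, which is just $\int_{\t_*}^\t s^{-1}m_s(y)\,ds\le C_u\log(\t/\t_*)$. Writing $\t=K\t_*$ and using $n\t_*/\z_{\t_*}\asymp p_n$, the expected increment is at most $-c(K-1)p_n+C_up_n\log K$, negative for all $K$ beyond an absolute constant. But this is precisely the paper's derivative inequality, integrated from $\t_*$ to $\t$; working pointwise in $\t$ with the score is more direct and is what the paper does.
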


\begin{proof} See Appendix \ref{proof_MMLE_upper}.
\end{proof}

A second main result is that under Condition~\ref{cond.eb} the posterior contracts at the near-minimax rate.

\begin{theorem}\label{thm:eb_contract}
For any estimator $\widehat\t_n$ of $\t$ that satisfies Condition~\ref{cond.eb},
the empirical Bayes posterior distribution contracts around the true parameter at the near-minimax rate:
for any $M_n \ra \infty$ and $p_n\ra\infty$,
\begin{align*}
\sup_{\th_0\in\ell_0[p_n]}
\E_{\th_0}\Pi_{\widehat\t_n}\Big(\th:\|\th_0-\th\|_2\geq M_n \sqrt{p_n \log n} \given Y^n\Big)\rightarrow0.
\end{align*}
In particular, this is true for $\widehat\t_n$ equal to the MMLE.
\end{theorem}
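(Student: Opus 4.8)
The plan is to prove Theorem~\ref{thm:eb_contract} by reducing the empirical Bayes contraction to the fixed-$\t$ contraction result already established in \cite{vdPas}, exploiting the monotonicity of the horseshoe posterior in $\t$ so that the random plug-in estimator $\widehat\t_n$ can be bounded by the deterministic ``optimal'' value $C\t_n(p_n)$. First I would recall from \cite{vdPas} that for a fixed $\t$ of order $\t_n(p_n)$, the posterior $\Pi_\t(\cdot\given Y^n)$ contracts at the near-minimax rate uniformly over $\ell_0[p_n]$; the goal is to transfer this to the data-dependent $\widehat\t_n$. The key structural fact to isolate is that the horseshoe posterior probability of the complement of an $\ell_2$-ball, namely $\th\mapsto\Pi_\t\big(\|\th-\th_0\|_2\ge M_n\sqrt{p_n\log n}\given Y^n\big)$, is \emph{monotone} (nondecreasing) in $\t$ on $[1/n,1]$. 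This monotonicity is what lets us dominate the plug-in posterior, evaluated at the random $\widehat\t_n$, by the posterior at the fixed upper endpoint.

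Granting monotonicity, the argument proceeds as follows. Let $A_n=\{\widehat\t_n\in[1/n,C\t_n(p_n)]\}$ be the event from Condition~\ref{cond.eb}. On $A_n$, monotonicity gives
\begin{equation*}
\Pi_{\widehat\t_n}\Big(\|\th_0-\th\|_2\ge M_n\sqrt{p_n\log n}\given Y^n\Big)
\le \sup_{\t\in[1/n,\,C\t_n(p_n)]}\Pi_\t\Big(\|\th_0-\th\|_2\ge M_n\sqrt{p_n\log n}\given Y^n\Big).
\end{equation*}
Splitting the expectation over $A_n$ and its complement, and using that the posterior probability is bounded by $1$ while $P_{\th_0}(A_n^c)\to0$ uniformly, it suffices to show that the supremum on the right, taken in expectation, tends to zero uniformly over $\ell_0[p_n]$. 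I would bound the supremum by the value at the largest relevant $\t$, of order $\t_n(p_n)$, and then invoke the fixed-$\t$ contraction bound from \cite{vdPas}: since any $\t\le C\t_n(p_n)$ produces at most as much mass outside the ball as $\t$ of order $\t_n(p_n)$ (up to adjusting the constant $M_n$, which is harmless as $M_n\to\infty$), the expected posterior mass outside the ball vanishes. The final assertion for the MMLE follows immediately by combining this with Theorem~\ref{thm: UB_tau}, which guarantees the MMLE satisfies Condition~\ref{cond.eb}.

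The main obstacle I anticipate is establishing the monotonicity of the tail posterior probability in $\t$ rigorously, and more precisely handling the supremum over the interval $[1/n,C\t_n(p_n)]$ rather than a single point. The horseshoe posterior does not come with an obvious stochastic-ordering property, so I would either prove the relevant monotonicity directly from the explicit posterior mean/variance formulas and concentration bounds in \cite{vdPas}, or, if strict monotonicity fails, replace it by a uniform-in-$\t$ contraction bound: show that $\sup_{\t\le C\t_n(p_n)}\E_{\th_0}\Pi_\t(\cdots)$ is controlled by re-running the coordinatewise analysis of \cite{vdPas} with $\t$ ranging over the interval and checking that all constants are uniform. The heart of the matter is that the bounds on per-coordinate posterior mean-squared error in the horseshoe model are, up to constants, increasing in $\t$ for $\t$ below the detection threshold, so the worst case occurs at the upper endpoint; verifying this uniformity is the technical crux, while the surrounding reduction is routine.
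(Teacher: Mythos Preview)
Your primary approach rests on the claim that $\t\mapsto\Pi_\t(\|\th-\th_0\|_2\ge M_n\sqrt{p_n\log n}\given Y^n)$ is nondecreasing in $\t$. This is not established in \cite{vdPas} and there is no reason to expect it: the ball is centered at the \emph{true} parameter $\th_0$, not at the posterior mean $\hat\th(\t)$, and for a large nonzero signal $\th_{0,i}$ with very small $\t$ the posterior mean is shrunk towards zero, which can increase the bias $|\hat\th_i(\t)-\th_{0,i}|$ and hence the posterior mass far from $\th_{0,i}$. The dependence on $\t$ is therefore genuinely two-sided, and no stochastic-ordering argument is available; even if one could prove such monotonicity, it would require substantially more work than the contraction result itself.

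What the paper actually does is precisely your fallback, but the mechanism is slightly different from what you describe. After restricting to the event in Condition~\ref{cond.eb}, the paper applies the triangle inequality $\|\th-\th_0\|_2\le\|\th-\hat\th(\t)\|_2+\|\hat\th(\t)-\th_0\|_2$ and Chebyshev on the first piece, reducing the problem to showing that both $\E_{\th_0}\sup_{\t\in[1/n,C\t_n]}\|\hat\th(\t)-\th_0\|_2^2$ and $\E_{\th_0}\sup_{\t\in[1/n,C\t_n]}\var(\th\given Y^n,\t)$ are $O(p_n\log n)$. These are handled coordinatewise via the explicit bounds of Lemma~\ref{LemmaBoundsPostMeanVariance}. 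For the zero coordinates the relevant bounds ($|\hat\th_i(\t)|\le\t|y|e^{y^2/2}$ and $\var(\th_i\given y,\t)\lesssim\t e^{y^2/2}$) are linear in $\t$, so the supremum is attained at $\t=C\t_n$; for the nonzero coordinates the bounds depend on $\t$ only through $\z_\t$, which on $[1/n,C\t_n]$ is at most $\z_{1/n}=\sqrt{2\log n}$, so the supremum is attained at the \emph{lower} endpoint. Thus the uniformity over $\t$ comes from having coordinatewise upper bounds that are monotone in $\t$ in opposite directions and both controlled on the interval, not from monotonicity of the posterior tail probability itself. Your sketch should be reorganized to make this uniform-bound argument the main line, and the monotonicity-of-posterior-probability step should be dropped.
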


\begin{proof}
See Appendix \ref{proof:eb_contract}.
\end{proof}

\subsection{Hierarchical Bayes}
\label{sec:hierarchical}
The full Bayes posterior distribution contracts at the near minimax rate whenever the prior density
$\pi_n$ on $\t$ satisfies the following two conditions.


\begin{cond}\label{cond.hyper.3}
The prior density $\pi_n$ is supported inside $[1/n, 1]$. 
\end{cond}

\begin{cond}\label{cond.hyper.1}
Let $t_n= C_u\pi^{3/2}\, \t_n(p_n)$, with the constant $C_u$ as in 
Lemma~\ref{lem: E_nonzero_m(Y)}(i). The prior density $\pi_n$ satisfies
\begin{equation*}
\int_{t_n/2}^{t_n}\pi_n(\t)\, d\t \gtrsim e^{-cp_n},\quad \text{for some $c\leq C_u/2$}.
\end{equation*}
\end{cond}

The restriction of the prior distribution to the interval $[1/n, 1]$ can be motivated by the same reasons as 
discussed under the definition of the MMLE in Section \ref{sec:MMLE}. 
In our simulations (also see \cite{vdPas}) we have also noted that 
large values produced by for instance a sampler using a half-Cauchy prior, as in the original set-up proposed by \cite{Carvalho2010}, 
were not beneficial to recovery.

As $t_n$ is of the same order as $\t_n(p_n)$, Condition~\ref{cond.hyper.1} is similar to
Condition~\ref{cond.eb} in the empirical Bayes case. It requires that there is sufficient prior mass
around the ``optimal'' values of $\t$.  The condition is satisfied by many
prior densities, including the usual ones, except in the very sparse case that $p_n\lesssim \log n$,
when it requires that $\pi_n$ is unbounded near zero. For this situation we also introduce the
following weaker condition, which is still good enough for a contraction rate with additional
logarithmic factors.

\begin{cond}\label{cond.hyper.1a}
For $t_n$ as in Condition~\ref{cond.hyper.1} the prior density $\pi_n$ satisfies,
\begin{equation*}
\int_{t_n/2}^{t_n}\pi_n(\t)\, d\t \gtrsim t_n.
\end{equation*}
\end{cond}

\begin{ex} 
The Cauchy distribution on the positive reals, truncated to $[1/n, 1]$,
has density $\pi_n(\t) = (\arctan(1) - \arctan(1/n))^{-1}(1+\t^2)^{-1}\textbf{1}_{\t \in [1/n, 1]}$. This satisfies
Condition~\ref{cond.hyper.3}, of course, and Condition~\ref{cond.hyper.1a}. It also satisfies 
the stronger Condition~\ref{cond.hyper.1} provided $t_n\ge e^{-cp_n}$, i.e.\ $p_n\ge C\log n$, for a sufficiently
large $C$. 
\end{ex}

\begin{ex}
For the uniform prior on $[1/n, 1]$, with density $\pi_n(\t) = n/(n-1)\textbf{1}_{\t \in [1/n, 1]}$,
the same conclusions hold.
\end{ex}

\begin{ex}
For the prior with density $\pi_n(x)\propto 1/x$ on $[1/n,1]$, 
Conditions~\ref{cond.hyper.3} and~\ref{cond.hyper.1} hold provided $p_n\gg \loglog n$.
\end{ex}

The following lemma is a crucial ingredient of the derivation of the contraction rate.
It shows that the posterior distribution of $\t$ will concentrate its mass at most a constant multiple of $t_n$
away from zero.  We denote the posterior distribution of $\t$ by the same
general symbol $\Pi(\cdot\given Y^n)$.

\begin{lemma}\label{lem:HBhyper}
If Conditions~\ref{cond.hyper.3} and~\ref{cond.hyper.1} hold, then 
\begin{align*}
\inf_{\th_0\in\ell_0[p_n]}\E_{\th_0}\Pi(\t: \t\le  5t_n \given Y^n)\ra 1.
\end{align*}
Furthermore, if only Conditions~\ref{cond.hyper.3} and~\ref{cond.hyper.1a} hold, then 
 the similar assertion  is true but with $5t_n$ replaced by $(\log n)t_n$.
\end{lemma}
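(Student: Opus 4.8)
The plan is to write the posterior of $\t$ as a ratio of integrals of the marginal likelihood and to show that the mass on $\{\t>5t_n\}$ is exponentially negligible compared with that on $[t_n/2,t_n]$, where Condition~\ref{cond.hyper.1} guarantees prior mass at least $e^{-cp_n}$. Writing $m_\t(y)=\int\phi(y-\th)g_\t(\th)\,d\th$ for the marginal density of one observation and $R(\t)=\prod_{i=1}^n m_\t(Y_i)/\phi(Y_i)$ for the likelihood ratio against pure noise (the factor $\prod_i\phi(Y_i)$ cancels in the posterior), one gets
\begin{equation*}
\Pi(\t:\t>5t_n\given Y^n)\le \frac{\int_{5t_n}^1 R(\t)\pi_n(\t)\,d\t}{\int_{t_n/2}^{t_n}R(\t)\pi_n(\t)\,d\t}\le e^{cp_n}\,\frac{\sup_{\t\ge 5t_n}R(\t)}{\inf_{\t^*\in[t_n/2,t_n]}R(\t^*)}.
\end{equation*}
It therefore suffices to prove that, on an event of $P_{\th_0}$-probability tending to one uniformly over $\ell_0[p_n]$,
\begin{equation*}
\sup_{\t\ge5t_n,\ \t^*\in[t_n/2,t_n]}\ \sum_{i=1}^n\log\frac{m_\t(Y_i)}{m_{\t^*}(Y_i)}\ \le\ -cp_n-w_n,\qquad w_n\to\infty,
\end{equation*}
since then the previous display is at most $e^{-w_n}$ and taking expectations (adding the negligible complementary probability) gives the claim.

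The exponent splits over the at most $p_n$ nonzero and at least $n-p_n$ zero coordinates. For the zero coordinates $Y_i\sim\NNN(0,1)$, and the crucial observation is that the raw expectation of the ratio is useless (indeed $\E_0[m_\t(Y)/\phi(Y)]=1$); one must work on the log scale, where Jensen's inequality produces a strictly negative drift $\E_0\log(m_\t(Y)/m_{\t^*}(Y))=-(K(\t)-K(\t^*))$, with $K(\t):=\int\phi(y)\log(\phi(y)/m_\t(y))\,dy$. Using the bounds on $m_\t$ recorded in Lemma~\ref{lem: E_nonzero_m(Y)} and its companions, $K$ is increasing in $\t$, and the calibration of $t_n$ (this is exactly where the constant $C_u$ and the factor $\pi^{3/2}$ enter) is arranged so that $(n-p_n)\big(K(5t_n)-K(t_n)\big)\ge \kappa_1\,p_n\log(n/p_n)$ for a constant $\kappa_1$ increasing in the threshold multiple. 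For the nonzero coordinates one needs only an upper bound, and here comparison to $m_{\t^*}$ rather than to $\phi$ is essential: because $m_{\t^*}$ already captures signal through its Cauchy-like tails, the per-coordinate increment saturates, $\sup_y \log\big(m_\t(y)/m_{\t^*}(y)\big)\lesssim\log(1/t_n)\asymp\log(n/p_n)$, whence $\sum_{\text{nonzero}}\log(m_\t(Y_i)/m_{\t^*}(Y_i))\le \kappa_2\,p_n\log(n/p_n)$. Comparing instead to $\phi$ would only yield the useless bound $\tfrac12\th_{0,i}^2$ per coordinate.

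Combining the two contributions, the mean of the exponent is at most $-(\kappa_1-\kappa_2)p_n\log(n/p_n)$, and the threshold $5$ is chosen (in terms of $C_u$ and the constant in the nonzero bound) so that $\kappa_1>\kappa_2$; since $\log(n/p_n)\to\infty$ this dominates $cp_n$ (recall $c\le C_u/2$) with a margin $w_n\asymp(\kappa_1-\kappa_2)p_n\log(n/p_n)\to\infty$. It remains to pass from this bound on the mean to a bound on the random exponent that is uniform over $\t\ge5t_n$ and $\t^*\in[t_n/2,t_n]$. The summands are bounded by $O(\log(n/p_n))$ with controlled variance, so a Bernstein inequality handles each fixed $(\t,\t^*)$; uniformity follows by discretizing $[t_n/2,1]$ into polynomially many points and using the Lipschitz dependence of $\t\mapsto\log m_\t(y)$ (equivalently, the monotonicity of the drift in $\t$) to fill the gaps. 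I expect this uniform control, together with verifying the per-coordinate increment at the sharp scale $\log(n/p_n)$ rather than the crude $\log n$, to be the main obstacle: that distinction is precisely what makes the argument survive in the less sparse regime where $\log(n/p_n)\ll\log n$.

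Finally, under the weaker Condition~\ref{cond.hyper.1a} the prior mass on $[t_n/2,t_n]$ is only $\gtrsim t_n$, so the factor $e^{cp_n}$ above is replaced by $1/t_n$, costing an extra $\log(1/t_n)\asymp\log(n/p_n)$ in the exponent. Repeating the argument with the threshold $5t_n$ enlarged to $(\log n)t_n$ inflates the zero-coordinate penalty $(n-p_n)\big(K((\log n)t_n)-K(t_n)\big)$ by a factor of order $\log n$, which comfortably absorbs this extra cost while the nonzero gain remains of order $p_n\log(n/p_n)$; this yields the second assertion.
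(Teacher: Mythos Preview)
Your setup---bounding the posterior ratio by the marginal likelihood ratio times the reciprocal of the prior mass on $[t_n/2,t_n]$---matches the paper's. The difference is in how you control $\log R(\t)-\log R(\t^*)$, and there your argument has a gap in the rates.

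You claim $(n-p_n)\bigl(K(5t_n)-K(t_n)\bigr)\ge\kappa_1\, p_n\log(n/p_n)$. This is too large by the factor $\log(n/p_n)$. Writing $\psi_\t$ for the marginal density (your $m_\t$; beware that the paper reserves $m_\t$ for the score object in \eqref{def: m}), one has $K'(\t)=-\E_0[\dot\psi_\t(Y)/\psi_\t(Y)]=-\t^{-1}\E_0 m_\t(Y)$ by Lemma~\ref{lem: diff_like}, and Proposition~\ref{prop:Em(Y)} gives this as $(2/\pi)^{3/2}\z_\t^{-1}(1+o(1))$. Integrating from $t_n$ to $5t_n$ and multiplying by $n-p_n$ yields a quantity of order $n\,t_n/\z_{t_n}\asymp p_n$, not $p_n\log(n/p_n)$. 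On the other hand your nonzero bound $\sup_y\log\bigl(\psi_\t(y)/\psi_{\t^*}(y)\bigr)\le C_u\log(\t/\t^*)$ is correct (it follows from $m_\t\le C_u$ in Lemma~\ref{lem: E_nonzero_m(Y)}(i) together with Lemma~\ref{lem: diff_like}), but it gives $p_n C_u\log(\t/\t^*)$, which is of order $p_n$ for $\t$ near $5t_n$ and grows to order $p_n\log(n/p_n)$ only as $\t\to 1$. So the two pieces are not on a common scale $p_n\log(n/p_n)$, and your constant comparison $\kappa_1>\kappa_2$ does not go through as written. To salvage the approach you would have to compare, at each $\t\ge 5t_n$, the zero-coordinate penalty (of order $n\t/\z_\t$) against the nonzero gain (of order $p_n\log(\t/t_n)$) and check that the former dominates uniformly; this is true, but it is exactly the information encoded in the sign and size of the derivative $\partial_\t M_\t$, and the paper takes that shorter route.

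Concretely, the paper uses \eqref{eq: UB_diff_like} to show that, on an event of $P_{\th_0}$-probability tending to one, $\partial_\t M_\t(Y^n)$ is bounded above by $p_nC_u/(t_n/2)$ on $[t_n/2,t_n]$, is negative on $(t_n,2t_n)$, and is below $-p_nC_u/(2t_n)$ on $[2t_n,1]$. Integrating from $\t_{\min}=\argmin_{[t_n/2,t_n]}M_\t$ to any $\t\ge 5t_n$ gives $M_\t(Y^n)-M_{\t_{\min}}(Y^n)\le -\t p_nC_u/(10t_n)$, and the posterior ratio is bounded by a multiple of $e^{-c_np_nC_u/10}$ divided by the prior mass on $[t_n/2,t_n]$. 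No separate Bernstein-plus-discretization step is needed: the uniformity in $\t$ is already supplied by the uniform law of large numbers in Lemma~\ref{lem: uniform0}, which underlies \eqref{eq: UB_diff_like}. This also makes the second assertion immediate: under Condition~\ref{cond.hyper.1a} the denominator is only $\gtrsim t_n$, and taking $c_n=\log n$ in the same display suffices.
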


\begin{proof}
See Appendix \ref{proof:lem:HBhyper}.
\end{proof}

We are ready to state the posterior contraction result for the full Bayes posterior. 

\begin{theorem}\label{thm:hierarchical_contract}
If the prior on $\t$ satisfies Conditions~\ref{cond.hyper.3} and~\ref{cond.hyper.1}, then 
the hierarchical Bayes posterior contracts to the true parameter at the near minimax rate:
for any $M_n\ra\infty$ and $p_n\ra\infty$,
\begin{align*}
\sup_{\th_0\in\ell_0[p_n]}\E_{\th_0}\Pi(\th:\|\th-\th_0\|_2\geq M_n\sqrt{p_n\log n}  \given Y^n)\rightarrow0.
\end{align*}
If the prior on $\t$ satisfies only Conditions~\ref{cond.hyper.3} and~\ref{cond.hyper.1a}, then 
this is true with $\sqrt{p_n\log n}$ replaced by $\sqrt{p_n}\log n$. 
\end{theorem}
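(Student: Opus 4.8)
The plan is to write the full Bayes posterior as a $\t$-mixture of the fixed-$\t$ posteriors and to combine the confinement of the $\t$-posterior supplied by Lemma~\ref{lem:HBhyper} with a uniform-in-$\t$ strengthening of the empirical Bayes bound of Theorem~\ref{thm:eb_contract}. Abbreviating $A_n=\{\th:\|\th-\th_0\|_2\ge M_n r_n\}$, the hierarchical posterior is
\[
\Pi(A_n\given Y^n)=\int_{1/n}^{1}\Pi_\t(A_n\given Y^n)\,d\Pi(\t\given Y^n),
\]
where Condition~\ref{cond.hyper.3} ensures the support lies in $[1/n,1]$. For a threshold $T$ to be chosen, I would split the integral at $\t=T$ and bound it by
\[
\Pi(A_n\given Y^n)\le \sup_{\t\in[1/n,T]}\Pi_\t(A_n\given Y^n)+\Pi(\t> T\given Y^n).
\]

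Under Conditions~\ref{cond.hyper.3} and~\ref{cond.hyper.1} I take $T=5t_n$ and $r_n=\sqrt{p_n\log n}$. The second term then has $\E_{\th_0}$-expectation tending to zero, uniformly over $\ell_0[p_n]$, directly by Lemma~\ref{lem:HBhyper}. For the first term, note that $T=5t_n$ is a fixed multiple of $\t_n(p_n)$, so $[1/n,T]\subset[1/n,C\t_n(p_n)]$ for a suitable constant $C$. Because Theorem~\ref{thm:eb_contract} holds for an \emph{arbitrary} estimator lying in $[1/n,C\t_n(p_n)]$ with high probability, its proof must in fact control the whole family at once, i.e.\ deliver
\[
\sup_{\th_0\in\ell_0[p_n]}\E_{\th_0}\sup_{\t\in[1/n,C\t_n(p_n)]}\Pi_\t(A_n\given Y^n)\to0,
\]
which is exactly the bound needed. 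Concretely I would obtain it by bounding $\Pi_\t(A_n\given Y^n)$ through Markov's inequality by $\int\|\th-\th_0\|_2^2\,d\Pi_\t(\th\given Y^n)/(M_n^2r_n^2)$ and invoking the coordinatewise bounds on the horseshoe posterior mean and variance from \cite{vdPas}; since these are monotone in $\t$, the supremum over $[1/n,T]$ is governed by the endpoint $\t=T\asymp\t_n(p_n)$, at which the expected second moment is of order $p_n\log n=r_n^2$.

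Under the weaker Conditions~\ref{cond.hyper.3} and~\ref{cond.hyper.1a}, Lemma~\ref{lem:HBhyper} only confines $\t$ to $[1/n,(\log n)t_n]$, so I would rerun the argument with $T=(\log n)t_n$ and $r_n=\sqrt{p_n}\log n$. The single change is in the order of the endpoint second moment: the dominant contribution of the $n-p_n\approx n$ null coordinates to $\int\|\th-\th_0\|_2^2\,d\Pi_\t$ grows linearly in $\t$, so at $\t\asymp(\log n)\t_n(p_n)$ the bound picks up an extra factor $\log n$ and becomes of order $p_n(\log n)^2=r_n^2$, again cancelling $M_n^2r_n^2$.

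The main obstacle is the uniform-in-$\t$ second moment bound over the continuum $[1/n,T]$: one must control the expectation of a random supremum over $\t$, not merely each fixed $\t$. I expect this to reduce to the endpoint $\t=T$ via monotonicity of the per-coordinate horseshoe mean and variance bounds in $\t$, so that the supremum collapses to the fixed-$\t$ analysis already underlying Theorem~\ref{thm:eb_contract}. Verifying this monotonicity and pinning down the precise order at the endpoint---in particular the linear-in-$\t$ growth of the null-coordinate posterior variance, which is exactly what produces the extra $\sqrt{\log n}$ in the weak case---is the crux of the calculation.
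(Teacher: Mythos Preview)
Your proposal is correct and follows essentially the same route as the paper: split the mixture at $T=5t_n$ (respectively $T=(\log n)t_n$), control the large-$\t$ piece by Lemma~\ref{lem:HBhyper}, and control the small-$\t$ piece by the uniform-in-$\t$ version of Theorem~\ref{thm:eb_contract}. Your observation that the proof of Theorem~\ref{thm:eb_contract} necessarily delivers the uniform bound $\E_{\th_0}\sup_{\t\in[1/n,C\t_n]}\Pi_\t(A_n\given Y^n)\to 0$ is exactly what the paper relies on, and your Markov-on-the-second-moment route is just a repackaging of the paper's split into bias and posterior variance via Chebyshev; both lead to the same coordinatewise bounds from Lemma~\ref{LemmaBoundsPostMeanVariance}. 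One small refinement: rather than arguing monotonicity of the posterior mean and variance themselves in $\t$, the paper fixes the regime boundary at $\z_{\t_n}$ (independent of $\t$) and uses that the bounds (iii) and (vi) of Lemma~\ref{LemmaBoundsPostMeanVariance} are linear in $\t$, while (ii) and (v) are $\t$-free; this makes the supremum over $\t\in[1/n,T]$ immediate without needing monotonicity of the actual quantities.
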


\begin{proof}
Using the notation $r_n=\sqrt{p_n\log n}$, we can decompose the left side of the preceding display as
\begin{align*}
& \E_{\th_0}\Bigl[\int_{\t\leq 5t_n}+\int_{\t>5t_n}\Bigr] \Pi_\t(\th: \|\th-\th_0\|_2\geq M_nr_n \given Y^n)
\, \pi(\t \given Y^n)\,d\t\\
&\quad\leq  \E_{\th_0}\sup_{\t\leq 5t_n} \Pi_\t(\th:\|\th-\th_0\|_2\geq M_nr_n \given Y^n)
+\E_{\th_0}\Pi(\t: \t> 5 t_n \given Y^n).
\end{align*}
The first term on the right tends to zero by Theorem~\ref{thm:eb_contract}, and the
second by Lemma~\ref{lem:HBhyper}.
\end{proof}

\section{Simulation study}
\label{sec:simulation}
We study the relative performances of the empirical Bayes and hierarchical Bayes approaches further through simulation studies, 
extending the simulation study in \cite{vdPas}.
We  consider the mean square error (MSE) for empirical Bayes combined with either (i) the simple estimator (with $c_1 = 2, c_2 = 1$)
or (ii) the MMLE, and for hierarchical Bayes with either (iii) a Cauchy prior on $\t$, or (iv) a Cauchy prior truncated to $[1/n, 1]$ on $\t$. 

We created a ground truth $\th_0$ of length $n = 400$ with $p_n \in \{20, 200\}$, where each nonzero mean was fixed to
$A \in \{1, 2, \ldots, 10\}$. We computed the posterior mean for each of the four procedures, and
approximated the MSE by averaging over $N = 100$ iterations. The
results are shown in Figure~\ref{fig:MSE}. In addition the figure shows the MSE separately for the
nonzero and zero coordinates of $\th_0$, and the average value (of the posterior mean) of $\t$. 

\begin{figure}[h!]
\begin{center}
\includegraphics[width = 0.9\textwidth]{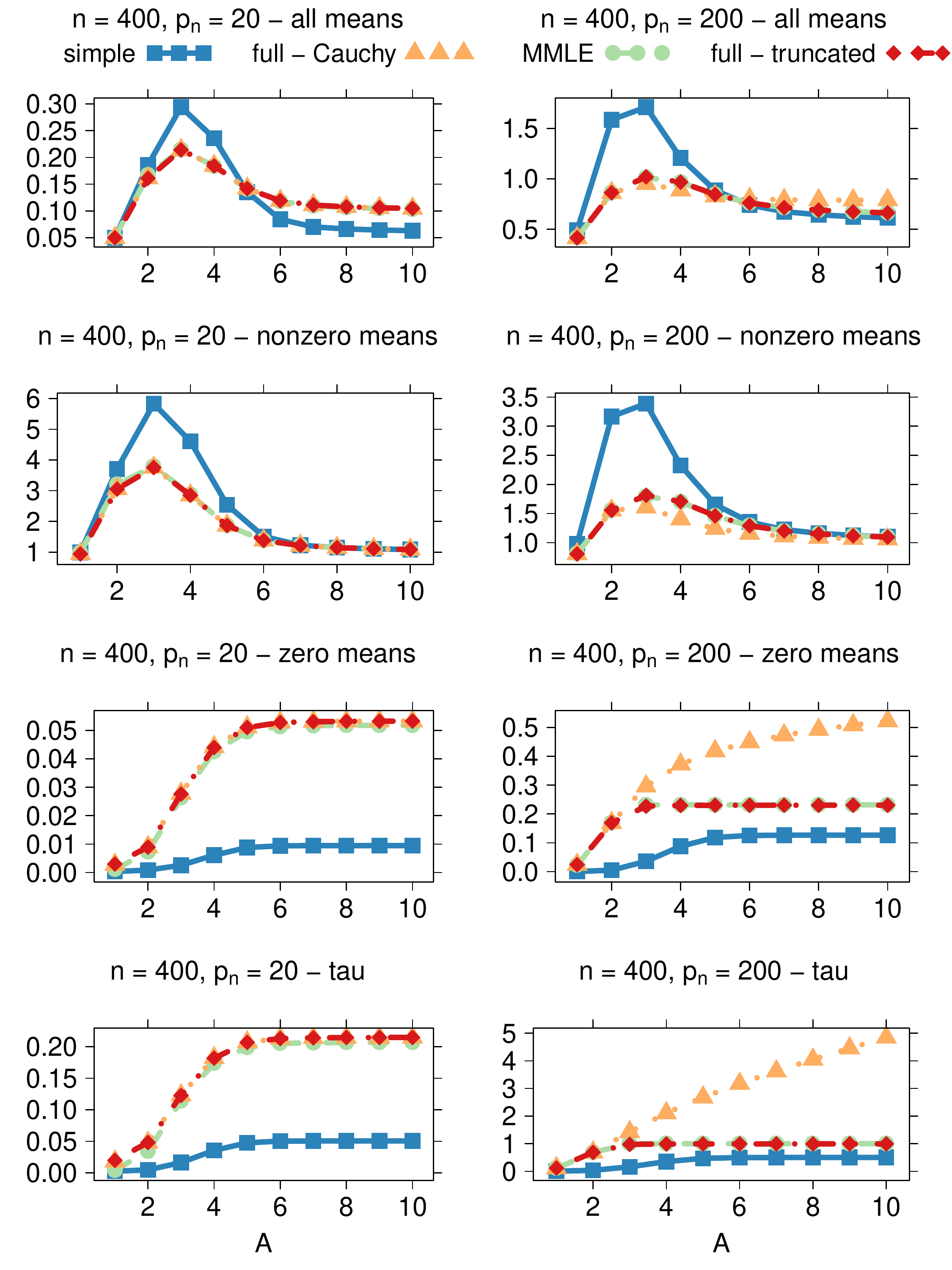}
\caption{Mean square error (overall, for the nonzero coordinates, and for the zero coordinates) of
  the posterior mean corresponding to empirical Bayes with the simple estimator with
  $c_1 = 2, c_2 = 1$ ($\blacksquare$) or the MMLE ($\bullet$) and to hierarchical Bayes with a Cauchy
  prior on $\t$ ($\blacktriangle$) or a Cauchy prior truncated to $[1/n, 1]$ ($\blacklozenge$). The
  bottom plot shows the average estimated value of $\t$ (or the posterior mean in the case of the
  hierarchical Bayes approaches). The settings are $n = 400$ and $p_n = 20$ (left) and $p_n = 200$
  (right); the results are approximations based on averaging over $N = 100$ samples for each value of $A$.}
\label{fig:MSE}	
\end{center}
\end{figure}

The shapes of the curves of the overall MSE for methods (i) and (iii) were discussed in
\cite{vdPas}.  Values close to the threshold $\sqrt{2\log{n}} \approx 3.5$ pose the most difficult
problem, and hierarchical Bayes with a Cauchy prior performs better below the threshold, while
empirical Bayes with the simple estimator performs better above, as the simple estimator is very
close to $p_n/n$ in those settings, whereas the values of $\t$ resulting from hierarchical Bayes are
much larger.

Three new features stand out in this comparison, with the MMLE and hierarchical Bayes with a
truncated Cauchy added in, and the opportunity to study the zero and nonzero means separately. The
first is that empirical Bayes with the MMLE and hierarchical Bayes with the Cauchy prior truncated
to $[1/n, 1]$ behave very similarly, as was expected from our proofs, in which the comparison of the
two methods is fruitfully explored.

Secondly, while in the most sparse setting ($p_n = 20$), full Bayes with the truncated and non-truncated Cauchy priors yield very similar results, as the mean value of $\t$ does not come close to the `maximum' of 1 in either approach, the truncated Cauchy (and the MMLE) offer an improvement over the non-truncated Cauchy in the less sparse ($p_n = 200$) setting. The non-truncated Cauchy does lead to lower MSE on the nonzero means close to the threshold, but overestimates the zero means due to the large values of $\t$. With the MMLE and the truncated Cauchy, the restriction to $[1/n, 1]$ prevents the marginal posterior of $\t$ from concentrating too far away from the 'optimal' values of order $\t_n(p_n)$, leading to better estimation results for the zero means, and only slightly higher MSE for the nonzero means. 

Thirdly, the lower MSE of the simple estimator for large values of $A$ in case $p_n = 20$ is mostly due to a small improvement in estimating the zero means, compared to the truncated Cauchy and the MMLE. As so many of the parameters are zero, this leads to lower overall MSE. However, close to the threshold, the absolute differences between these methods on the nonzero means can be quite large, and the simple estimator performs worse than all three other methods for these values.

Thus, from an estimation point of view, empirical Bayes with the MMLE or hierarchical Bayes with a truncated Cauchy seem to deliver the best results, only to be outperformed by hierarchical Bayes with a non-truncated Cauchy in a non-sparse setting with all zero means very close to the universal threshold.


\appendix

\section{Proof of the main result about the MMLE}

\subsection{Proof of Theorem \ref{thm: UB_tau}}
\label{proof_MMLE_upper}
By its definition the MMLE maximizes the logarithm of the marginal likelihood function, which is given by
\begin{align}\label{eq:def.M.tau}
M_{\t}(Y^n)&=\sum_{i=1}^n\log \Big(\int_{-\infty}^{\infty} \phi(y_i-\th)g_{\t}(\th)d\th\Big).
\end{align}
We split the sum in the indices $I_0:=\{i : \th_{0,i}=0\}$ and $I_1:=\{i : \th_{0,i}\not=0\}$. 
By Lemma~\ref{lem: diff_like}, with  $m_\t$ given by \eqref{def: m},
$$\frac{d}{d\t}M_{\t}(Y^n)=\frac1\t\sum_{i\in I_0}m_\t(Y_i)+\frac1\t\sum_{i\in I_1}m_\t(Y_i).$$
By Proposition~\ref{prop:Em(Y)} the expectations of the terms in the first sum are strictly negative and
bounded away from zero for $\t\ge\e$, and any given $\e>0$.
By Lemma~\ref{LemmaNotNormed} the sum behaves likes its expectation, uniformly in $\t$.
By Lemma~\ref{lem: E_nonzero_m(Y)} (i) the function  $m_\t$ is uniformly bounded by a constant $C_u$. 
It follows that for every $\e>0$ there exists a constant $C_\e>0$ such that, for all $\t\ge\e$, 
and with $p_n=\#(\th_{0,i}\not=0)$,
the preceding display is bounded above by 
$$-\frac{n-p_n}{\t}C_\e(1+o_P(1))+\frac{p_n}{\t}C_u.$$
This is negative with probability tending to one as soon as $(n-p_n)/p_n> C_u/C_e$, and in that case the maximum $\mmle$ of $M_\t(Y^n)$ is taken on $[1/n,\e]$. Since this is true  for any $\e>0$, we conclude that 
$\mmle$  tends to zero in probability.

We can now apply Proposition~\ref{prop:Em(Y)} and Lemma~\ref{lem: uniform0} to obtain
the more precise bound on the derivative when $\t\ra 0$ given by
\begin{equation}
\frac{d}{d\t}M_{\t}(Y^n)\le -\frac{(n-p_n)(2/\pi)^{3/2}}{\z_\t}(1+o_P(1))+ \frac{p_n}{\t}C_u.
\label{eq: UB_diff_like}
\end{equation}
This is negative for $\t/\z_\t\gtrsim p_n/(n-p_n)$, and then $\mmle$ is situated on the
left side of the solution to this equation, or $\mmle/\z_{\mmle}\lesssim p_n/(n-p_n)$,
which implies, that $\mmle \lesssim \t_n$, given the assumption that $p_n = o(n)$.

\section{Proofs of the contraction results}

\begin{lemma}
\label{LemmaBoundsPostMeanVariance}
For $A>1$ and every $y\in \RR$,
\begin{itemize}
\item[(i)] $|\E(\th_i\given Y_i=y,\t)-y|\le 2 \z_\t^{-1}$, for $|y|\ge A\z_\t$, as $\t\ra0$.
\item[(ii)] $|\E(\th_i\given Y_i=y,\t)|\le |y|$.
\item[(iii)] $|\E(\th_i\given Y_i=y,\t)|\le \t |y| e^{y^2/2}$, as $\t\ra0$.
\item[(iv)] $|\var(\th_i\given Y_i=y,\t)- 1|\le \z_\t^{-2}$, for $|y|\ge A\z_\t$, as $\t\ra0$.
\item[(v)] $\var(\th_i\given Y_i=y,\t)\le 1+y^2$,
\item[(vi)] $\var (\th_i\given Y_i=y,\t)\lesssim \t e^{y^2/2}(y^{-2}\wedge 1)$, as $\t\ra0$.
\end{itemize}
\end{lemma}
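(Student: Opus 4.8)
My plan is to start from the mixture representation of the posterior. Conditionally on the local scale $\l_i$ and on $\t$, the pair $(\th_i,Y_i)$ is jointly Gaussian, so $\th_i\mid Y_i=y,\l_i,\t$ is normal with mean $(1-\k)y$ and variance $1-\k$, where $\k=1/(1+\l_i^2\t^2)\in(0,1)$ is the shrinkage weight. Writing $\E(\cdot\mid y,\t)$ and $\var(\cdot\mid y,\t)$ for moments under the posterior law of $\k$ obtained from the prior \eqref{EqHorseShoePrior} after integrating out $\th_i$, this gives
\begin{equation*}
\E(\th_i\mid Y_i=y,\t)=y\,\E(1-\k\mid y,\t),\qquad
\var(\th_i\mid Y_i=y,\t)=\E(1-\k\mid y,\t)+y^2\var(\k\mid y,\t),
\end{equation*}
the second identity being the decomposition of variance. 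Since $\k\in(0,1)$ forces $\E(1-\k\mid y,\t)\in[0,1]$ and $\var(\k\mid y,\t)\le1/4$, the crude bounds (ii) and (v) are immediate from these two formulas.

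The explicit weight is the key object: after integrating out $\th_i$, the posterior density of $\l$ is proportional to $\sqrt{\k}\,e^{-y^2\k/2}(1+\l^2)^{-1}$, where I used $\tfrac{y^2}{2(1+\l^2\t^2)}=\tfrac{y^2\k}{2}$ and $(1+\l^2\t^2)^{-1/2}=\sqrt\k$. All four remaining bounds reduce to estimating ratios of such integrals. A lower bound on the normalising constant comes from the region of small $\l$, where $\k\approx1$, giving $\int_0^\infty\sqrt\k\,e^{-y^2\k/2}(1+\l^2)^{-1}\,d\l\gtrsim e^{-y^2/2}$; this is the source of the factor $e^{y^2/2}$ in (iii) and (vi).

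For the $\t$-bounds (iii) and (vi), valid as $\t\ra0$, I would bound the numerator of $\E(1-\k\mid y,\t)$ using $1-\k=\l^2\t^2/(1+\l^2\t^2)\le\l^2\t^2$, reducing matters to the elementary integral $\int_0^\infty\l^2\t^2\big((1+\l^2\t^2)(1+\l^2)\big)^{-1}\,d\l\lesssim\t$ (a one-line partial-fraction computation). Dividing by the denominator bound $\gtrsim e^{-y^2/2}$ yields $\E(1-\k\mid y,\t)\lesssim\t e^{y^2/2}$, hence (iii) after multiplying by $|y|$. Estimate (vi) follows along the same lines from the variance decomposition, the extra decay $y^{-2}\wedge1$ coming from a slightly more careful treatment of the $y^2\var(\k\mid y,\t)$ term.

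The substantive part is (i) and (iv), the ``no shrinkage for large signals'' bounds on the range $|y|\ge A\z_\t$ with $A>1$. Here $y^2\ge A^2\z_\t^2=2A^2\log(1/\t)$, so the weight carries the factor $e^{-y^2\k/2}\le\t^{A^2\k}$, which strongly penalises the region where $\k$ is bounded away from $0$ and thereby forces the posterior of $\k$ to concentrate near $0$, i.e.\ almost no shrinkage. Quantitatively I would pass to the variable $\k$ (so that $\l^2=(1-\k)/(\k\t^2)$ and the weight becomes an explicit density on $(0,1)$) and carry out a Laplace-type estimate to show $\E(\k\mid y,\t)\lesssim\z_\t^{-2}$ and $y^2\var(\k\mid y,\t)\lesssim\z_\t^{-2}$ uniformly over this range as $\t\ra0$. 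Then $|\E(\th_i\mid y,\t)-y|=|y|\,\E(\k\mid y,\t)\lesssim|y|\z_\t^{-2}$, which is $\le2\z_\t^{-1}$ on $|y|\ge A\z_\t$, giving (i); and $|\var(\th_i\mid y,\t)-1|\le\E(\k\mid y,\t)+y^2\var(\k\mid y,\t)\lesssim\z_\t^{-2}$ gives (iv). I expect this concentration estimate for $\k$ — obtaining the precise rate $\z_\t^{-2}$ with the right constants after splitting the $\k$-integral into its bulk near $0$ and the exponentially suppressed tail near $1$ — to be the main obstacle; the bounds (ii), (iii), (v), (vi) are comparatively routine once the representation and the denominator lower bound are in hand.
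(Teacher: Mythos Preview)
Your mixture representation via the shrinkage coefficient $\k=1/(1+\l_i^2\t^2)$ is exactly the paper's route in disguise: after the substitution $z=1-\k$ the posterior moments of $z$ become the ratios $I_{1/2}/I_{-1/2}$, $I_{3/2}/I_{-1/2}$ of the integrals $I_k$ defined in \eqref{eq:def.Ik}, and the paper proves (i)--(vi) by expanding those integrals (Lemmas~\ref{LemmaI-1/2} and~\ref{LemmaI1/23/2}). So the approach is the same, and your treatment of (ii), (iii), (v) is correct and essentially identical to the paper's.

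There is, however, a genuine gap in your argument for (i). You propose to show $\E(\k\mid y,\t)\lesssim\z_\t^{-2}$ uniformly on $|y|\ge A\z_\t$ and then conclude that $|y|\,\E(\k\mid y,\t)\lesssim|y|\z_\t^{-2}\le 2\z_\t^{-1}$. But the last inequality is equivalent to $|y|\le 2\z_\t$, which \emph{contradicts} the hypothesis $|y|\ge A\z_\t$ (and the range contains arbitrarily large $|y|$). A uniform-in-$y$ bound $\E(\k)\lesssim\z_\t^{-2}$ is simply too weak once it is multiplied by $|y|$. What the Laplace argument actually delivers is the pointwise bound $\E(\k\mid y,\t)=O(y^{-2})$: the factor $e^{-y^2\k/2}$ confines $\k$ to a window of width $\sim 2/y^2$ around $0$, and near $\k=0$ the prior density (in $\k$) is essentially constant, so the posterior of $\k$ is close to exponential with mean $2/y^2$. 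From $\E(\k)\lesssim y^{-2}$ you then get $|y|\,\E(\k)\lesssim |y|^{-1}\le (A\z_\t)^{-1}$, which is (i). The fix is thus to retain the $y$-dependence in the concentration estimate and only pass to a $\z_\t$-bound after multiplying by $|y|$. The paper obtains the same conclusion by writing $\E(\k)=(I_{-1/2}-I_{1/2})/I_{-1/2}$ and using that both $I_{-1/2}$ and $I_{1/2}$ equal $2e^{y^2/2}/y^2$ to leading order for $|y|\ge A\z_\t$, so their difference divided by $I_{-1/2}$ is $O(y^{-2})$ plus a term $\asymp y^2\t e^{-y^2/2}$ which is negligible because $A>1$.

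For (iv) your outline is fine since you do not multiply by $|y|$ at the end; but you will again need the pointwise rates $\E(\k\mid y,\t)=O(y^{-2})$ and $\var(\k\mid y,\t)=O(y^{-4})$, not merely their suprema $\z_\t^{-2}$ and $\z_\t^{-4}$, to make the $y^2\var(\k)$ term work uniformly.
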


\begin{proof}
Inequalities  (iii) and  (v) come from Lemma~A.2 and Lemma A.4 
in  \cite{vdPas}, while (ii), (iv) and  (vi) are implicit in the proofs of Theorems~3.1 and 3.2 (twice) in \cite{vdPas},
and (i) with the bound $\z_\t$ instead of $\z_\t^{-1}$ is their (17). 
Alternatively, the posterior mean and variance in these assertions are given in \eqref{EqPosteriorCumulants}
 and \eqref{EqPosteriorVarianceExpression}. Then (ii) and (iv) are immediate
from the fact that $0\le I_{3/2}\le I_{1/2}\le I_{-1/2}$, while (iii) and (vi) follow by bounding $I_{-1/2}$ below by a multiple of
$1/\t$ and $I_{3/2}\le I_{1/2}$ above by $(1\wedge y^{-2})e^{y^2/2}$, using Lemmas~\ref{LemmaI-1/2} and~\ref{LemmaI1/23/2}.
Assertions  (i) and (iv) follow from expanding $I_{-1/2}$ and $I_{1/2}$ and $I_{3/2}$, 
again using Lemmas~\ref{LemmaI-1/2} and~\ref{LemmaI1/23/2}. 
\end{proof}

For the proof of Theorem \ref{thm:eb_contract}, we use the following observations.  The posterior density of $\th_i$ given $(Y_i=y,\t)$ is (for fixed $\t$) an exponential family with density
$$\th\mapsto \frac{\phi(y-\th)g_\t(\th)}{\psi_\t(y)}=c_\t(y) e^{\th y} g_\t(\th) e^{-\th^2/2},$$
where $g_\t$ is the posterior density of $\th$ given in \eqref{EqPriorDensityTheta}, and $\psi_\t$ is the
Bayesian marginal density of $Y_i$, given in \eqref{Eqpsitau}, and the norming constant is given by
$$c_\t(y)=\frac{\phi(y)}{\psi_\t(y)}=\frac{\pi}{\t I_{-1/2}(y)},$$
for the function $I_{-1/2}(y)$ defined in \eqref{eq:def.Ik}. The cumulant moment generating function
$z\mapsto \log \E (e^{z\th_i}\given Y_i=y,\t)$ of the family is given by 
$z\mapsto\log \bigl(c_\t(y)/c_\t(y+z)\bigr)$, which is $z\mapsto \log I_{-1/2}(y+z)$ plus an additive constant
independent of $z$. We conclude that the first, second and fourth cumulants are given by
\begin{align}
\hat\th_i(\t)=\E(\th_i\given Y_i=y,\t)&=\frac{d}{dy}\log I_{-1/2}(y),\nonumber\\
\var(\th_i\given Y_i=y,\t)&=\frac{d^2}{dy^2}\log I_{-1/2}(y),
\label{EqPosteriorCumulants}\\
\E\bigl[\bigl(\th_i-\hat\th_i(\t)\bigr)^4\given Y_i=y,\t\bigr]-3 \var(\th_i\given Y_i=y,\t)^2
&=\frac{d^4}{dy^4}\log I_{-1/2}(y).\nonumber
\end{align}
The derivatives at the right side can be computed by repeatedly using the product and sum rule together with the
identity $I_k'(y)=y I_{k+1}(y)$, for $I_k$ as in \eqref{eq:def.Ik}. In addition, since $(\log h)''=h''/h-(h'/h)^2$, for any function $h$,  and $I_{-1/2}'(y)=y I_{1/2}(y)$ and $I_{-1/2}''(y)=y^2I_{3/2}(y)+I_{1/2}(y)$, we
have
\begin{equation}
\label{EqPosteriorVarianceExpression}
\var(\th_i\given Y_i=y,\t)=y^2\Bigl[\frac{I_{3/2}}{I_{-1/2}}-\Bigl(\frac{I_{1/2}}{I_{-1/2}}\Bigr)^2\Bigr](y)+\frac{I_{1/2}}{I_{-1/2}}(y).
\end{equation}

\subsection{\label{proof:eb_contract}Proof of Theorem \ref{thm:eb_contract}}
\begin{proof}
Set $r_n=\sqrt{p_n\log n}$ and $\t_n=\t_n(p_n)$. By  Condition~\ref{cond.eb} and the triangle inequality,
\begin{align*}
&\E_{\th_0}\Pi_{\widehat\t_n}\Big(\th:\|\th_0-\th\|_2\geq M_n r_n \given Y^n\Big)\\
&\qquad\le \E_{\th_0}\1_{\widehat\t_n\in[1/n, C\t_n] }\Pi_{\widehat\t_n}\Big(\th:\|\th_0-\hat\th(\widehat\t_n)\|_2+\|\th-\hat\th(\widehat\t_n)\|_2\geq M_n r_n \given Y^n\Big)+o(1)\nonumber\\
&\qquad \leq \E_{\th_0} \sup_{\t\in[1/n, C\t_n] }\Pi_{\t}\Big(\th:\|\th_0-\hat\th(\t)\|_2+\|\th-\hat\th(\t)\|_2\geq M_n r_n \given Y^n \Big)+o(1).
\end{align*}
Hence, in view of Chebyshev's inequality, it is sufficient to show that, 
with $\var(\th\given Y^n,\t)=\E\bigl(\|\th-\hat\th(\t)\|^2\given Y^n,\t\bigr)$,
\begin{align}
P_{\th_0} \Bigl(\sup_{\t\in [1/n, C\t_n]}\|\th_0-\hat\th(\t)\|_2\geq (M_n/2)r_n\Bigr)&=o(1),\label{eq: MSE}\\
P_{\th_0} \Bigl(\sup_{\t\in [1/n, C\t_n]} \var (\th \given Y^n, \t )\geq M_n r_n^2\Bigr)&=o(1).\label{eq: spread}
\end{align}
To prove \eqref{eq: MSE} we first use Lemma~\ref{LemmaBoundsPostMeanVariance}(i)+(ii) to see that
$|\hat\th_i(\t)|\lesssim \z_\t$ and next the triangle inequality to see that $|\hat\th_i(\t)-\th_{0,i}|\lesssim \z_\t+ |Y_i-\th_{0,i}|$, as $\t\ra0$.
This shows that 
\begin{align}
\E_{\th_{0,i}}\sup_{\t\in [1/n,\t_n]}(\th_{0,i}-\hat\th_{i}(\t))^2\lesssim \sup_{\t\geq1/n}\z_\t^2+\var_{\th_{0,i}}Y_i\lesssim \log n.\label{eq: help101}
\end{align}
Second we use Lemma~\ref{LemmaBoundsPostMeanVariance} (iii) and (ii) to see that $|\hat\th_i(\t)|$ is bounded
above by $\t|Y_i|e^{Y_i^2/2}$ if $|Y_i|\le \z_{\t_n}$ and bounded above by $|Y_i|$ otherwise, so that
$$\E_0\sup_{\t\in [1/n,C\t_n]} |\hat\th_i(\t)|^2\lesssim \int_0^{\z_{\t_n}}(C\t_n)^2y^2e^{y^2}\phi(y)\,dy+\int_{\z_{\t_n}}^\infty y^2\phi(y)\,dy
\lesssim \t_n\z_{\t_n}.$$
Applying the upper bound \eqref{eq: help101} for the $p_n$ non-zero coordinates $\th_{0,i}$, and the upper bound in the
last display for the zero parameters, we find that
\begin{align*}
\E_{\th_{0}}\sup_{\t \in [1/n, C\t_n]}\|\th_{0}-\hat\th(\t)\|_2^2\lesssim p_n \log{n}+(n-p_n)\t_n\z_{\t_n}\lesssim p_n \log{n}.
\end{align*}
Next an application of Markov's inequality leads to \eqref{eq: MSE}.

The proof of \eqref{eq: spread} is similar. For the nonzero $\th_{0,i}$ we use
the fact that $\Var(\th_i \given Y_i,\t)\leq 1+\z_{\t}^2 \lesssim \log n$,
by Lemma~\ref{LemmaBoundsPostMeanVariance} (iv) and (v), 
while for the zero $\th_{0,i}$ we use that $\Var(\th_i \given Y_i,\t)$ is bounded above by $\t e^{Y_i^2/2}$ for $|Y_i|\le\z_{\t_n}$ and
bounded above by $1+Y_i^2$ otherwise, by Lemma~\ref{LemmaBoundsPostMeanVariance} (vi) and (v).
For the two cases of parameter values this gives bounds for $\E_{\th_{0,i}}\sup_{\t\in[1/n,C\t_n]}\Var(\th_i \given Y_i,\t)$ of the same
form as the bounds for the square bias, resulting in the overall bound 
$p_n \log{n}+(n-p_n)\t_n\z_{\t_n}\lesssim p_n \log{n}$ for the sum of these variances.
An application of Markov's inequality gives \eqref{eq: spread}.
\end{proof}

\subsection{\label{proof:lem:HBhyper}Proof of Lemma \ref{lem:HBhyper}}
The number $t_n$ defined in Condition~\ref{cond.hyper.1a} is the (approximate) solution to the equation
$p_nC_u/\t=C_e(n-p)/(2\z_\t)$, for $C_e=(\pi/2)^{3/2}$. By the decomposition \eqref{eq: UB_diff_like},  
with $P_{\th_0}$-probability tending to one, 
$$\frac{\partial}{\partial\t} M_{\t}(Y^n)<
\begin{cases}
p_nC_u/(t_n/2),& \text{ if } t_n/2\leq\t\leq t_n,\\
0& \text{ if }\t>t_n,\\
-p_nC_u/(2t_n),& \text{ if }\t\geq 2t_n.
\end{cases}$$
Therefore, for $M_\t(Y^n)$ defined in \eqref{eq:def.M.tau}, $\t_{\min}=\argmin_{\t\in[t_n/2,t_n]}M_{\t}(Y^n)$, and  $\t\geq 2t_n$,
\begin{align*}
M_{\t}(Y^n)-M_{\t_{\min}}(Y^n) &=\Bigl[\int_{\t_{\min}}^{t_n}+\int_{t_n}^{2t_n}+\int_{2t_n}^\t\Bigr]\frac{\partial}{\partial s}  M_{s}(Y^n)\,ds\\
&\leq (t_n/2)p_nC_u/(t_n/2)+0 -(\t-2t_n)p_nC_u/(2t_n)\\
&=-(\t-4t_n)p_nC_u/(2t_n) \leq -\t p_n C_u/(10t_n),
\end{align*}
for $\t\ge 5t_n$. Since $\pi(\t\given Y^n)\propto \pi(\t)e^{M_\t(Y^n)}$ by Bayes's formula, with $P_{\th_0}$-probability tending to one, for $c_n\ge 5$
\begin{align*}
\Pi(\t\geq c_nt_n \given Y^n)
&
\leq \frac{\int_{\t\geq c_nt_n}e^{M_{\t_{\min}}(Y^n)-\t p_n C_u/(10t_n)}\pi(\t)\, d\t}{\int_{\t\in [t_n/2,t_n] }e^{M_{\t_{\min}}(Y^n)}\pi(\t)\, d\t}
\lesssim \frac{e^{-c_np_nC_u/10}}{\int_{\t\in [t_n/2,t_n] }\pi(\t)\, d\t}.
\end{align*}
Under Condition~\ref{cond.hyper.1} this tends to zero if $c_n\ge 5$. 
Under the weaker Condition~\ref{cond.hyper.1a} this is certainly true for $c_n\ge\log n$.

\section{Lemmas supporting the MMLE results}
\label{Sec: lem: diff_like}

For $k\in\{-1/2,1/2,3/2\}$ define a function $I_k: \RR\to\RR$ by
\begin{align}\label{eq:def.Ik}
I_k(y): =\int_0^1 z^k \frac{1}{\t^2+(1-\t^2)z}e^{y^2z/2}\,dz.
\end{align}
The Bayesian marginal density of $Y_i$ given $\t$ is the convolution $\psi_\t:=\phi\ast g_\t$ of the
standard normal density and the prior density of $g_\t$, given in  \eqref{EqPriorDensityTheta}.
The latter is a half-Cauchy mixture of  normal densities $\phi_{\t\l}$ with mean zero and standard deviation $\t\l$.
By Fubini's theorem it follows that $\psi_\t$ is a half-Cauchy mixture of the densities $\phi\ast\phi_{\t\l}$. In other words
\begin{align}
\psi_\t(y)&=\int _0^\infty \frac{e^{-\frac12y^2/(1+\t^2\l^2)}}{\sqrt{1+\t^2\l^2}\sqrt{2\pi}}\,\frac{2}{1+\l^2}\frac1\pi\,d\l
=\int _0^1 \frac{e^{-\frac12y^2(1-z)}}{\sqrt{2\pi}\pi} \frac{\t z^{-1/2}}{\t^2(1-z)+z}\,dz\nonumber\\
&=\frac{\t}{\pi}I_{-1/2}(y)\phi(y),\label{Eqpsitau}
\end{align}
where the second step follows by the substitution $1-z=(1+\t^2\l^2)^{-1}$ and some algebra.
Note that $I_{-1/2}$ depends on $\t$, but this has been suppressed  from the notation $I_k$.

Set
\begin{align}
m_\t(y)=y^2\frac{I_{1/2}(y)-I_{3/2}(y)}{I_{-1/2}(y)}-\frac{I_{1/2}(y)}{I_{-1/2}(y)}.\label{def: m}
\end{align}

\begin{lemma}\label{lem: diff_like}
The derivative of the log-likelihood function takes the form 
\begin{align*}
\frac{d}{d\t}M_\t(y^n)=\frac1\t\sum_{j=1}^{n}m_\t(y_j).
\end{align*}
\end{lemma}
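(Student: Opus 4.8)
The plan is to collapse the whole statement to a single pointwise identity for the $\t$-derivative of $I_{-1/2}$ and then to prove that identity by integration by parts in the auxiliary variable $z$ of \eqref{eq:def.Ik}.

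First I would feed the closed form \eqref{Eqpsitau}, namely $\psi_\t(y)=(\t/\pi)\,\phi(y)\,I_{-1/2}(y)$, into the definition \eqref{eq:def.M.tau}. Taking logarithms gives $M_\t(y^n)=\sum_{j}\bigl[\log\t+\log I_{-1/2}(y_j)\bigr]$ up to a $\t$-free constant (collecting $-n\log\pi$ and $\sum_j\log\phi(y_j)$), so that $\tfrac{d}{d\t}M_\t(y^n)=\sum_j\bigl(\tfrac1\t+\partial_\t I_{-1/2}(y_j)/I_{-1/2}(y_j)\bigr)$. Comparing this with the definition \eqref{def: m} of $m_\t$, the lemma is seen to be equivalent to the single identity
\[
\t\,\partial_\t I_{-1/2}(y)=y^2\bigl(I_{1/2}(y)-I_{3/2}(y)\bigr)-I_{1/2}(y)-I_{-1/2}(y)\quad(\star).
\]
Differentiation under the integral sign is justified on every interval $[\e,1]\ni\t$ by domination (the singularity at $z=0$ is integrable), and $I_{-1/2}>0$ makes the division legitimate.

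Next I would compute the left side of $(\star)$ directly. Writing $D=D(z,\t)=\t^2+(1-\t^2)z=\t^2(1-z)+z$, the only $\t$-dependence in \eqref{eq:def.Ik} sits in $1/D$, with $\partial_\t(1/D)=-2\t(1-z)/D^2$. Using the elementary relation $\t^2(1-z)=D-z$ to trade one power of $D$ for a factor $z$, this yields $\t\,\partial_\t I_{-1/2}=-2I_{-1/2}+2K$, where $K:=\int_0^1 z^{1/2}D^{-2}e^{y^2z/2}\,dz$. Thus $(\star)$ reduces to showing $2K=I_{-1/2}-I_{1/2}+y^2(I_{1/2}-I_{3/2})$.

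The crux, and the step I expect to be the main obstacle, is expressing the $D^{-2}$-integral $K$ in terms of the $D^{-1}$-integrals $I_k$. The device is to convert $D^{-2}$ back to $D^{-1}$ by the fundamental theorem of calculus in $z$: since $\partial_z(1/D)=-(1-\t^2)/D^2$, integrating $\tfrac{d}{dz}\bigl(z^{1/2}e^{y^2z/2}/D\bigr)$ over $[0,1]$ produces the boundary value $e^{y^2/2}$ (at $z=1$, $D=1$) together with $I_{-1/2}$, $I_{1/2}$ and $(1-\t^2)K$. This one identity unavoidably carries both the stray boundary term $e^{y^2/2}$ and a factor $1/(1-\t^2)$, neither of which occurs in $m_\t$; I would eliminate both by running the same argument with $z^{3/2}e^{y^2z/2}/D$, giving a second identity involving $e^{y^2/2}$, $I_{1/2}$, $I_{3/2}$ and $K_{3/2}:=\int_0^1 z^{3/2}D^{-2}e^{y^2z/2}\,dz$, while the algebraic relation $(1-\t^2)K_{3/2}=I_{1/2}-\t^2K$ (again from $\t^2(1-z)=D-z$) rewrites $K_{3/2}$ through $K$. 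Subtracting the two boundary identities then cancels $e^{y^2/2}$ and the $1/(1-\t^2)$ factors simultaneously, leaving precisely $2K=I_{-1/2}-I_{1/2}+y^2(I_{1/2}-I_{3/2})$. Substituting this into $\t\,\partial_\t I_{-1/2}=-2I_{-1/2}+2K$ delivers $(\star)$, and summing over the coordinates $y_j$ gives the claimed form of $\tfrac{d}{d\t}M_\t(y^n)$.
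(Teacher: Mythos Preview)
Your proof is correct and follows the paper's strategy: reduce the statement to the pointwise identity $(\star)$ and verify it by integration by parts in the auxiliary variable $z$. The only difference is tactical: the paper performs a single integration by parts against the factor $\sqrt{z}(1-z)/N(z)$, which vanishes at both endpoints $z=0$ and $z=1$, so no boundary terms arise and the cancellation you engineer via two auxiliary identities is unnecessary.
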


\begin{proof}
From \eqref{Eqpsitau} we infer that, with a dot denoting the partial derivative with respect to $\t$,
\begin{align*}
\frac{\dot\psi_\t}{\psi_\t}&=\frac1\t+\frac{\dot I_{-1/2}}{ I_{-1/2}}=\frac{I_{-1/2}+\t\dot I_{-1/2} }{ \t I_{-1/2}}
=\frac{\int_0^1\frac{e^{y^2z/2}}{\sqrt z N(z)^2}[N(z)-2\t^2(1-z)]\,dz}{ \t I_{-1/2}},
\end{align*}
where $N(z)=\t^2(1-z)+z=\t^2+(1-\t^2)z$. By  integration by parts,
$$y^2(I_{1/2}-I_{3/2})(y)=\int_0^1\frac{\sqrt{z}(1-z)}{N(z)} y^2 e^{y^2z/2}\,dz=-2\int_0^1 e^{y^2z/2}\, d\Bigl[\frac{\sqrt z(1-z)}{N(z)}\Bigr].$$
Substituting the right hand side in formula \eqref{def: m}, we readily see by some algebra that 
$\t^{-1}$ times the latter formula reduces to the right side of the preceding display.
\end{proof}

\begin{proposition}\label{prop:Em(Y)}
Let $Y\sim N(\th,1)$. Then  $\sup_{\t\in [\eps,1]}\E_0 m_\t(Y)<0$ for every $\eps>0$, and as $\t\ra0$,
\begin{equation}\label{eq:m_tau_asymp}
\E_\th m_\t(Y)=
\begin{cases}
-\frac{2^{3/2}}{\pi^{3/2}}\,\frac{\t}{\z_\t}\bigl(1+o(1)\bigr), & |\th|=o(\z_{\t}^{-2}),\\
o (\t^{1/16}\z_{\t}^{-1}), &|\th|\leq \z_\t/4.
\end{cases}
\end{equation}
\end{proposition}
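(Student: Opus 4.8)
The plan is to express $m_\t$ through the posterior mean $\hat\th_\t(y)=\E(\th\given Y=y,\t)$ and variance $\Var(\th\given Y=y,\t)$, which are tightly controlled by Lemma~\ref{LemmaBoundsPostMeanVariance}. Comparing the definition~\eqref{def: m} of $m_\t$ with the variance expression~\eqref{EqPosteriorVarianceExpression}, and using $\hat\th_\t(y)=yI_{1/2}(y)/I_{-1/2}(y)$, a one-line algebraic simplification gives
\begin{equation*}
m_\t(y)=\hat\th_\t(y)\bigl(y-\hat\th_\t(y)\bigr)-\Var(\th\given Y=y,\t).
\end{equation*}
Moreover, the cumulant formulas~\eqref{EqPosteriorCumulants} identify $\hat\th_\t=(\log I_{-1/2})'$ and $\Var(\th\given Y=y,\t)=(\log I_{-1/2})''$, so that $\tfrac{d}{dy}\hat\th_\t(y)=\Var(\th\given Y=y,\t)$. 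These two facts drive the whole proof.

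First I would settle the sign. For $Y\sim N(0,1)$, Gaussian integration by parts (Stein's identity) applied to $\hat\th_\t$ gives $\E_0[\hat\th_\t(Y)\,Y]=\E_0[\tfrac{d}{dy}\hat\th_\t(Y)]=\E_0[\Var(\th\given Y,\t)]$, and substituting the displayed identity collapses the expectation to
\begin{equation*}
\E_0 m_\t(Y)=\E_0[\hat\th_\t(Y)Y]-\E_0[\hat\th_\t(Y)^2]-\E_0[\Var(\th\given Y,\t)]=-\E_0[\hat\th_\t(Y)^2].
\end{equation*}
Since $\hat\th_\t$ is not identically zero for $\t\in(0,1]$ this is strictly negative, and since $\t\mapsto\E_0[\hat\th_\t(Y)^2]$ is continuous (dominated convergence, with $\hat\th_\t^2\le y^2$ from Lemma~\ref{LemmaBoundsPostMeanVariance}(ii)) and positive on the compact set $[\e,1]$, it attains a positive minimum; hence $\sup_{\t\in[\e,1]}\E_0 m_\t(Y)<0$.

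For $Y\sim N(\th,1)$ the same integration by parts, now centered at $\th$, yields $\E_\th[\hat\th_\t(Y)(Y-\th)]=\E_\th[\Var(\th\given Y,\t)]$, so that
\begin{equation*}
\E_\th m_\t(Y)=\th\,\E_\th[\hat\th_\t(Y)]-\E_\th[\hat\th_\t(Y)^2].
\end{equation*}
For the coarse regime $|\th|\le\z_\t/4$ (by symmetry take $\th\ge0$) I would bound both terms by splitting each integral at $|y|=\z_\t$, the crossover of the two estimates $|\hat\th_\t(y)|\le\t|y|e^{y^2/2}$ and $|\hat\th_\t(y)|\le|y|$ of Lemma~\ref{LemmaBoundsPostMeanVariance}(iii),(ii). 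On $|y|\le\z_\t$ one uses $|\hat\th_\t(y)|\le\t|y|e^{y^2/2}$, so that after partial cancellation against the Gaussian the integrands are dominated by the endpoint $y=\z_\t$; on $|y|>\z_\t$ one uses $|\hat\th_\t(y)|\le|y|$ and the Gaussian tail directly. Because $\th\le\z_\t/4$, the exponential tilt contributes at most $e^{\z_\t^2/4}=\t^{-1/2}$ and the tail mass at most $e^{-9\z_\t^2/32}=\t^{9/16}$, so each piece is of size $\t^{c}\,\mathrm{poly}(\z_\t)$ with $c>1/16$, giving $\E_\th m_\t(Y)=o(\t^{1/16}\z_\t^{-1})$.

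For the sharp regime $|\th|=o(\z_\t^{-2})$ the cross term is negligible and the constant comes entirely from $-\E_\th[\hat\th_\t(Y)^2]$. Expanding $\phi(y-\th)$ about $\th=0$ and using that $\hat\th_\t$ is odd gives $\th\,\E_\th[\hat\th_\t(Y)]=\th^2\E_0[\Var(\th\given Y,\t)](1+o(1))$; since $\E_0[\Var(\th\given Y,\t)]=O(\t\z_\t)$ (integrate Lemma~\ref{LemmaBoundsPostMeanVariance}(iv)--(vi)) and $\th^2=o(\z_\t^{-4})$ this is $o(\t\z_\t^{-1})$, and the same expansion shows $\E_\th[\hat\th_\t(Y)^2]=\E_0[\hat\th_\t(Y)^2](1+o(1))$. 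It then remains to evaluate $\E_0[\hat\th_\t(Y)^2]$, and here I would insert the transition-region asymptotics of the functions $I_k$ from~\eqref{eq:def.Ik}, namely $I_{-1/2}(y)\sim\pi/\t+(2/y^2)e^{y^2/2}$ and $I_{1/2}(y)\sim(2/y^2)e^{y^2/2}$ for $y\asymp\z_\t$. Writing $b=b(y):=(2/y^2)e^{y^2/2}$ (so that $b\,e^{-y^2/2}=2/y^2$ and $dy\approx db/(\z_\t b)$ since $y\approx\z_\t$ throughout the crossover),
\begin{align*}
\E_0[\hat\th_\t(Y)^2]&=\frac{2}{\sqrt{2\pi}}\int_0^\infty\frac{y^2 I_{1/2}(y)^2}{I_{-1/2}(y)^2}\,e^{-y^2/2}\,dy\\
&\sim\frac{2}{\sqrt{2\pi}}\int_0^\infty\frac{2b}{(\pi/\t+b)^2}\,\frac{db}{\z_\t\,b}=\frac{2}{\sqrt{2\pi}\,\z_\t}\cdot\frac{2\t}{\pi}=\frac{2^{3/2}}{\pi^{3/2}}\,\frac{\t}{\z_\t},
\end{align*}
which is the asserted constant. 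The main obstacle is exactly this evaluation: the leading endpoint terms of $I_{1/2}$ and $I_{3/2}$ at $z=1$ coincide and cancel, so one cannot use the $z\approx1$ (``signal'') or $z\approx0$ (``noise'') approximation in isolation but must track $I_{-1/2}$ and $I_{1/2}$ together across the transition $y\asymp\z_\t$, and one must separately verify that the bulk $|y|\ll\z_\t$ and the far tail $|y|\gg\z_\t$ contribute only at lower order, so that the constant is genuinely pinned down.
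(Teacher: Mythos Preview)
Your approach is correct and genuinely different from the paper's. The key novelty is the identity $m_\t(y)=\hat\th_\t(y)\bigl(y-\hat\th_\t(y)\bigr)-\Var(\th\given Y=y,\t)$ combined with Stein's lemma, yielding the clean reduction $\E_\th m_\t(Y)=\th\,\E_\th\hat\th_\t(Y)-\E_\th\hat\th_\t(Y)^2$. This is not in the paper. For the sign statement your route is strictly stronger: $\E_0 m_\t(Y)=-\E_0\hat\th_\t(Y)^2<0$ is analytic and uniform on compacta by continuity, whereas the paper establishes $\sup_{\t\in[\e,1]}\E_0 m_\t(Y)<0$ only by numerical integration (Figure~\ref{fig:expectation}). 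For the coarse regime $|\th|\le\z_\t/4$ your splitting at $|y|=\z_\t$ and power counting with $e^{|\th|\z_\t}\le\t^{-1/2}$, $\phi(\z_\t-|\th|)\lesssim\t^{9/16}$ goes through and matches the paper's bookkeeping.

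For the sharp constant the paper takes a different path: it works directly with $m_\t$ via the $I_k$, splits $\int m_\t(y)\phi(y-\th)\,dy$ at $\z_\t$ and $\k_\t$, replaces $I_{-1/2}$ by $\pi/\t$ on $(0,\z_\t)$, then applies Fubini in $(y,z)$ and uses the exact identity $\int_0^\infty[y^2(1-z)-1]e^{-y^2(1-z)/2}\,dy=0$ to flip the inner integral to $(\z_\t,\infty)$, where it evaluates in closed form via $\int_y^\infty[(va)^2-1]\phi(va)\,dv=y\phi(ya)$. This delivers $2^{3/2}/\pi^{3/2}$ without any change-of-variables heuristic. Your substitution $b=(2/y^2)e^{y^2/2}$ with $dy\approx db/(b\z_\t)$ gives the right answer, but as you acknowledge it is not yet rigorous: you must justify replacing $y-2/y$ by $\z_\t$ in the Jacobian uniformly over the crossover window, and separately bound the contribution from bounded $|y|$ where the second-order expansions of Lemmas~\ref{LemmaI-1/2}--\ref{LemmaI1/23/2} are unavailable. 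Both are routine, so the proposal is sound up to those details; the paper's Fubini trick simply sidesteps them.
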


\begin{proof}
Let $\k_\t$ be the solution to the equation ${e^{y^2/2}}/({y^2/2})=1/\t$, that is
$$e^{\k_\t^2/2}=\frac 1\t\, \k_\t^2/2,\qquad\qquad 
\k_\t\sim \z_\t+\frac{2\log\z_\t}{\z_\t},\qquad\qquad \z_\t=\sqrt{2\log (1/\t)}.$$
We split the integral over $(0,\infty)$ into the three parts $(0,\z_\t)$, $(\z_\t,\k_\t)$, and $(\k_\t,\infty)$,
where we shall see that the last two parts give negligible contributions.

By Lemma~\ref{Lem: asymp_m}(vi) and (vii), if $|\th|\k_\t=O(1)$,
\begin{align*}
\int_{|y|\geq \k_\t} m_\t(y)\phi(y-\th)\,dy&\lesssim \int_{z\geq \k_\t-|\th|} \phi(z)\,dz\lesssim \frac{e^{-(\k_\t-\th)^2/2}}{\k_\t-\th}
\lesssim \frac{e^{-\k_\t^2/2}}{\k_\t},\\
\int_{\z_\t\leq |y|\leq\k_\t}m_\t(y)\phi(y-\th)\,dy&\lesssim \int_{\z_\t\leq |y|\leq\k_\t}\frac{\t e^{y^2/2-(y-\th)^2/2}}{y^2}\,dy
\lesssim\frac{\t(\k_\t-\z_\t)}{\z_\t^2}.
\end{align*}
By the definition of $\k_\t$, both terms are of smaller order than $\t/\z_\t$.

Because $e^{y^2/2}/y^2$ is increasing for large $y$ and reaches the value $\t^{-1}/\z_\t^2$ at $y=\z_\t$, Lemma~\ref{LemmaI-1/2}
gives that $I_{-1/2}(y)=\pi\t^{-1}(1+O(1/\z_\t^2))$ uniformly in $y$ in the interval $(0,\z_\t)$. Therefore
\begin{align*}
\int_{|y| \leq \z_\t} m_\t(y)\phi(y-\th)\,dy&=
\int_{0}^{\z_\t}\frac{y^2I_{1/2}(y)-y^2I_{3/2}(y)-I_{1/2}(y)}{\t^{-1}\pi}\,\phi(y)\,dy+R_\t,
\end{align*}
where the remainder $R_\t$ is bounded in absolute value by 
$\int_0^{\z_\t} |y^2(I_{1/2}-I_{3/2})(y)-I_{1/2}(y)|\phi(y)\,dy$ times 
$\sup_{0\le y\le\z_\t}\bigl|\phi(y-\th)/(I_{-1/2}(y)\phi(y))-{1}/{(\t^{-1}\pi)}\bigr|$, which is bounded
above by $\t\bigl(\z_\t^{-2}+e^{|\th|\z_\t-\th^2/2}-1)=o(\t\z_\t^{-1})$, for $|\th|=o(\z_\t^{-2})$.
By Lemma~\ref{LemmaI1/23/2} the integrand in the integral 
is bounded above by a constant for $y$ near $0$ and by a multiple
of $y^{-2}$ otherwise, and hence the integral remains bounded. Thus the remainder $R_\t$ is negligible. 
By Fubini's theorem the integral in the preceding display can be rewritten 
\begin{align*}
&\frac{\t}{\pi}\int_0^1 \frac{\sqrt z}{\t^2+(1-\t^2)z}\int_0^{\z_\t}\bigl[y^2(1-z)-1\bigr]\frac{e^{-y^2(1-z)/2}}{\sqrt{2\pi}}\,dy\,dz\\
&\qquad=-\frac{\t}{\pi}\int_0^1 \frac{\sqrt z}{\t^2+(1-\t^2)z}\int_{\z_\t}^\infty\bigl[y^2(1-z)-1\bigr]\frac{e^{-y^2(1-z)/2}}{\sqrt{2\pi}}\,dy\,dz
\end{align*}
by the fact that the inner integral vanishes when computed over the interval $(0,\infty)$ rather than $(0,\z_\t)$. 
Since $\int_y^\infty [(va)^2-1]\phi(va)\,dv=y\phi(ya)$, it follows that the right side is equal to
\begin{align*}
&\qquad  -\frac{\t}{\pi}\int_0^1 \frac{\sqrt z}{\t^2+(1-\t^2)z}\frac{\z_\t \, e^{-\z_\t^2(1-z)/2}}{\sqrt{2\pi}}\,dz.
\end{align*}
We split the integral in the ranges $(0,1/2)$ and $(1/2,1)$. For $z$ in the first range we have $1-z\ge 1/2$,
whence the contribution of this range is bounded in absolute value  by
$$\frac{{\z_\t}\t}{\pi\sqrt{2\pi}} e^{-\z_\t^2/4}\int_0^{1/2} \frac{\sqrt z}{(1-\t^2)z}\,dz=O({\z_\t}\t e^{-\z_\t^2/4}).$$
Uniformly in $z$ in the range $(1/2,1)$ we have $\t^2+(1-\t^2)z \sim z$, and the corresponding contribution is
\begin{align*}
-\frac{\t}{\pi}\int_{1/2}^1 \frac{1}{\sqrt z}\frac{\z_\t \, e^{-\z_\t^2(1-z)/2}}{\sqrt{2\pi}}\,dz
&= -\frac{\t}{ \pi \z_\t \sqrt{2\pi}}\int_0^{\z_\t^2/2} \frac1{\sqrt {1-u/\z_\t^2}}e^{-u/2}\,du.
\end{align*}
by the substitution $\z_\t^2(1-z)=u$. The integral tends to $\int_0^\infty e^{-u/2}\,du=2$,
and hence the expression is asymptotic to half the expression as claimed.

The second statement follows by the same estimates, where now we use that
$e^{|\th|2\z_\t-\th^2/2}\le \t^{-15/16}$, if $|\th|\le \z_\t/4$. 

Since $\E_0m_\t(Y)\sim -c\t/\z_\t$ for a positive constant $c$,  as $\t\da0$, 
the continuous function $\t\mapsto \E_0m_\t(Y)$ is certainly negative
if $\t>0$ and $\t$ is close to zero. To see that it is bounded away from zero as $\t$ moves away from 0,
we computed $\E_0 m_\t(Y)$ via numerical integration. The result is shown in Figure~\ref{fig:expectation}. 
\begin{figure}[h!]
\begin{center}
\includegraphics[width = 0.9\textwidth]{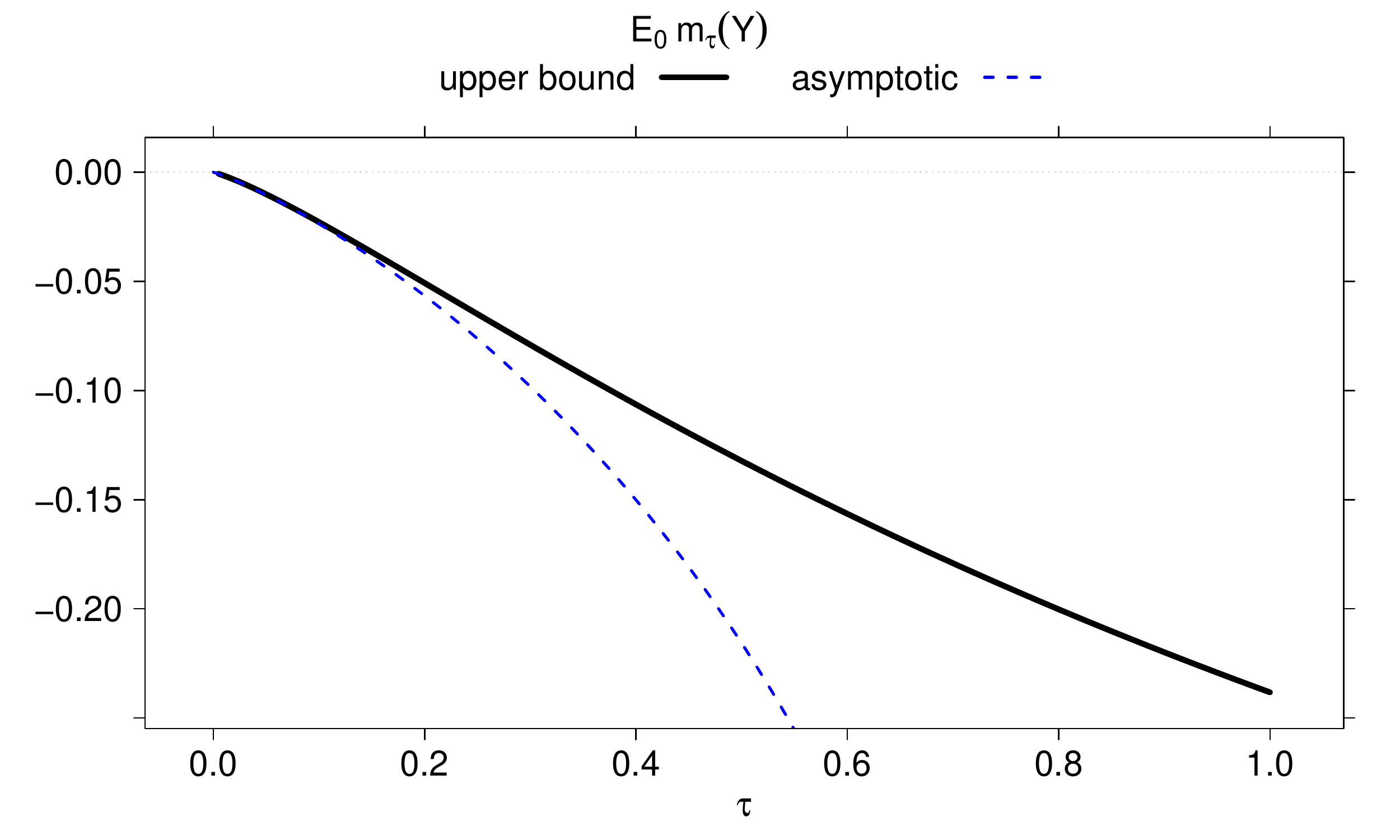} 
\caption{Upper bound on $\E_0 m_\t(Y)$ as computed with the R \texttt{integrate()} routine (solid line). The upper bound $m_\t(y) \leq y^2$ was used for $|y| > 500$ for numerical stability. The dashed line shows the asymptotic value \eqref{eq:m_tau_asymp}.  }
\label{fig:expectation}
\end{center}
\end{figure}
\end{proof}

\begin{lemma}\label{lem: uniform0}
For any $\e_\t\downarrow0$ and uniformly in $I_0\subseteq\{i: |\th_{0,i}|\leq \z_\t^{-1}\}$ with $|I_0|\gtrsim n$,
\begin{align*}
\sup_{1/n\leq \t\leq \e_\t}\frac{1}{|I_0|}\Big|\sum_{i\in I_0}m_\t(Y_i)\frac{\z_\t}{\t}
-\sum_{i\in I_0}\E_{\th_0} m_\t(Y_i)\frac{\z_\t}{\t}\Big|\stackrel{P_{\th_0}}{\rightarrow}0.
\end{align*}
Similarly, uniformly in $I_1\subseteq\{i: |\th_{0,i}|\leq \z_\t/4\}$,
\begin{align*}
\sup_{1/n\leq \t\leq \e_\t}\frac{1}{|I_1|}\Big|\sum_{i\in I_1}m_\t(Y_i)\frac{\z_\t}{\t^{1/32}}
-\sum_{i\in I_1}\E_{\th_0} m_\t(Y_i)\frac{\z_\t}{\t^{1/32}}\Big|\stackrel{P_{\th_0}}{\rightarrow}0.
\end{align*}
\end{lemma}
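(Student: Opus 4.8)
The plan is to read this as a uniform-in-$\t$ law of large numbers. Writing
$$Z_\t:=\frac{1}{|I_0|}\sum_{i\in I_0}\bigl(m_\t(Y_i)-\E_{\th_0}m_\t(Y_i)\bigr)\frac{\z_\t}{\t}$$
for the first assertion (and $Z_\t^{(1)}$ for the second, with $\t^{1/32}$ replacing $\t$ and $I_1$ replacing $I_0$), it suffices to prove $\E_{\th_0}\sup_{1/n\le\t\le\e_\t}|Z_\t|\to0$, since then $\sup_\t|Z_\t|\to0$ in $P_{\th_0}$-probability by Markov's inequality. I would obtain this from two ingredients: a pointwise-in-$\t$ second-moment bound, uniform over the admissible means, and a mean-square modulus of continuity in $\t$ allowing me to trade a single $\t$ for the supremum.

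First I would bound the variance at fixed $\t$. For $i\in I_0$ with $|\th_{0,i}|\le\z_\t^{-1}$, splitting the range of $Y_i$ into $(0,\z_\t)$, $(\z_\t,\k_\t)$ and $(\k_\t,\infty)$ as in the proof of Proposition~\ref{prop:Em(Y)}, and inserting the pointwise bounds on $m_\t$ from Lemma~\ref{Lem: asymp_m} together with $I_{-1/2}\sim\pi/\t$ on the bulk (Lemma~\ref{LemmaI-1/2}) and Lemma~\ref{LemmaI1/23/2}, gives $\E_{\th_0}m_\t(Y_i)^2\lesssim\t/\z_\t^3$ (up to logarithmic factors), the middle and tail regions dominating the bulk where $m_\t=O(\t)$. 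Hence the rescaled summand has variance $(\z_\t/\t)^2\E_{\th_0}m_\t(Y_i)^2\lesssim1/(\t\z_\t)$, so with $|I_0|\gtrsim n$ and $\t\ge1/n$ one gets $\Var_{\th_0}Z_\t\lesssim1/(|I_0|\t\z_\t)\le1/\z_\t\le1/\z_{\e_\t}\to0$, uniformly in $\t$ and in $I_0$. For the second assertion the same split applies, but $|\th_{0,i}|\le\z_\t/4$ inflates the region integrals by $e^{|\th_{0,i}|y-\th_{0,i}^2/2}\lesssim\t^{-7/16}$, which is outweighed by the milder factor $\z_\t/\t^{1/32}$, producing a rescaled per-coordinate variance of order $\t^{1/2}/\z_\t\to0$.

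To pass to the supremum, observe that $\t\mapsto Z_\t$ is $C^1$ on $[1/n,\e_\t]$ almost surely because each $I_k(y)$ is smooth in $\t$, so by the fundamental theorem of calculus and the fact that $Z_\t$ and $\partial_\t Z_\t$ are centered,
$$\E_{\th_0}\sup_\t|Z_\t|\le\sqrt{\Var_{\th_0}Z_{\e_\t}}+\int_{1/n}^{\e_\t}\sqrt{\Var_{\th_0}\,\partial_\t Z_\t}\,d\t.$$
The first term is $\lesssim\z_{\e_\t}^{-1/2}\to0$. For the integral I would differentiate $f_\t(y):=m_\t(y)\z_\t/\t$, using $\partial_\t I_k(y)=-\int_0^1 z^k\,2\t(1-z)N(z)^{-2}e^{y^2z/2}\,dz$ with $N(z)=\t^2+(1-\t^2)z$ and $\partial_\t(\z_\t/\t)\sim-\z_\t/\t^2$, and repeat the region analysis to establish $\E_{\th_0}(\partial_\t f_\t(Y))^2\lesssim1/(\t^3\z_\t)$. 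Then $\sqrt{\Var_{\th_0}\partial_\t Z_\t}\lesssim|I_0|^{-1/2}\t^{-3/2}\z_\t^{-1/2}$, and bounding $\z_\t^{-1/2}\le\z_{\e_\t}^{-1/2}$ and using $\int_{1/n}^{\e_\t}\t^{-3/2}\,d\t\le2\sqrt n$ yields an integral $\lesssim\z_{\e_\t}^{-1/2}\to0$. The second assertion is identical, the gentler normalisation there leaving an integrand of order $\t^{-3/4}\z_\t^{-1/2}$ that is integrable at the origin.

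The crux is the mean-square Lipschitz bound $\E_{\th_0}(\partial_\t f_\t(Y))^2\lesssim1/(\t^3\z_\t)$: it is a more delicate rerun of the second-moment computation, since $\partial_\t$ inserts the factor $2\t(1-z)N(z)^{-1}$ into the $I_k$-integrals, of size $\t^{-1}$ near $z=0$ but $O(\t)$ near $z=1$, so the competing small- and large-$z$ contributions must be tracked across all three $y$-regions to check that none exceeds $\t^{-3/2}\z_\t^{-1/2}$ after taking the square root. The heuristic that differentiation in $\t$ acts, on the dominant small-$z$ part, as multiplication by $\t^{-1}$ explains the extra factor $\t^{-2}$ relative to $\E_{\th_0}m_\t(Y)^2$; making this rigorous region by region is the only substantial obstacle, everything else being the routine variance bookkeeping above, which is uniform over the index sets because it uses $\th_{0,i}$ only through the crude bounds $|\th_{0,i}|\le\z_\t^{-1}$ and $|\th_{0,i}|\le\z_\t/4$.
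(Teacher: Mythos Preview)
Your approach is sound and reaches the same conclusion, but it differs from the paper's proof in how the uniform-in-$\t$ law of large numbers is obtained. The paper argues via chaining: it invokes a maximal inequality (Corollary~2.2.5 of \cite{vdVW} with $\psi(x)=x^2$), bounds the covering number of $[1/n,1]$ in the intrinsic metric $d_n^2(\t_1,\t_2)=\Var_{\th_0}\bigl(G_n(\t_1)-G_n(\t_2)\bigr)$ using Lemma~\ref{lem: metric} and a dyadic decomposition of $[1/n,1]$ into blocks $[2^i/n,2^{i+1}/n]$ to obtain $N(\e,[1/n,1],d_n)\lesssim 1/\e$, and concludes that the entropy integral is $\lesssim\sqrt{\diam_n}\to0$. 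Your fundamental-theorem-of-calculus route avoids chaining entirely; the mean-square Lipschitz bound you single out as the crux is precisely the content of the paper's Lemma~\ref{lem: metric} (established via Lemma~\ref{Lem: Diff}), so that step is already carried out there.

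One caution: the bound as stated in Lemma~\ref{lem: metric} is only $\E_\th\bigl(\partial_\t f_\t(Y)\bigr)^2\lesssim\t^{-3}$, and with this alone your integral $\int_{1/n}^{\e_\t}|I_0|^{-1/2}\t^{-3/2}\,d\t$ is merely $O(1)$, not $o(1)$. The paper's chaining argument sidesteps this because the entropy integral runs only up to the diameter, and the diameter (controlled separately by the pointwise variance from Lemma~\ref{lem: m2}) tends to zero. For your direct route you need the slightly sharper bound implicit in the proofs of Lemmas~\ref{lem: m2} and~\ref{lem: metric}: tracing through gives $\E_\th\bigl(\partial_\t f_\t(Y)\bigr)^2\lesssim \z_\t^2 + (\log\z_\t)\,\t^{-3}\z_\t^{-1}$ rather than your claimed $\t^{-3}\z_\t^{-1}$, the additive $\z_\t^2$ coming from the $\t^2\E_\th Y^4$ contribution to $\E_\th\dot m_\t(Y)^2$ in the proof of Lemma~\ref{lem: metric}. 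Both pieces still integrate to $o(1)$ (the first trivially since $\int_{1/n}^{\e_\t}\z_\t\,d\t\lesssim\e_\t\sqrt{\log n}$, the second essentially as you indicate), so your argument goes through once the bound is written correctly. In short, the chaining route is a little cleaner in that it separates a crude Lipschitz estimate from a vanishing diameter; your route is more elementary but demands one extra notch of precision in the derivative bound, which the paper's own lemmas supply.
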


\begin{proof}
Write $G_n(\t)=|I_0|^{-1}\sum_{i\in I_0}m_\t(Y_i)(\z_\t) /\t$. In view of Corollary 2.2.5 of \cite{vdVW}
(applied with $\psi(x)=x^2$) it is sufficient to show that $\Var_{\th_0} G_n(\t)\rightarrow 0$ for some
$\t$, and
\begin{align}\label{eq: entropy}
\int_0^{\diamm}\sqrt{N(\eps, [1/n,1],d_n)}\,d\eps=o(1),
\end{align}
where $d_n$ is the intrinsic metric defined by its square
$d_n^2(\t_1,\t_2)=\Var_{\th_0}\bigl(G_n(\t_1)-G_n(\t_2)\bigr)$, 
$\diamm$ is the diameter of the interval $[1/n,1]$ with respect to the metric $d_n$, and
$N(\eps,A,d_n)$ is the covering number of the set $A$ with $\eps$ radius balls with respect to the
metric $d_n$. 

If $|\th_{0,i}|\leq \z_\t^{-1}$, then in view of Lemma~\ref{lem: m2}, as $\t\ra0$,
\begin{align*}
\Var_{\th_0} G_n(\t)&\leq \frac1{|I_0|}\E_{\th_0}\bigl( m_\t(Y)\z_\t /\t\bigr)^2=o(\t^{-1}/|I_0|). 
\end{align*}
This tends to zero, as  $\t n\ge 1$ by assumption. 
Combining this with the triangle inequality we also see that the diameter $\diamm$ tends to 0.

Next we deal with the entropy. 
The metric $d_n$ is up to a constant equal to the square root of the left side of \eqref{def: metric}.
By Lemma~\ref{lem: metric} it satisfies
$$d_n(\t_1,\t_2)\lesssim  |I_0|^{-1/2} |\t_2/\t_1-1|\t_1^{-1/2}.$$
To compute the covering number of the interval
$[1/n,1]$, we cover this by dyadic blocks $[2^i/n,2^{i+1}/n]$, for $i=0,1,2,...,\log_2 n$. On the
$i$th block the distance $d_n(\t_1,\t_2)$ is bounded above by a multiple of $n|\t_1-\t_2|/2^{3i/2}$.
We conclude that the $i$th block can be covered by 
a multiple of $\eps^{-1} 2^{-i/2}$ balls of radius $\eps$. Therefore the whole
interval $[1/n,1]$ can be  covered by a multiple of $\eps^{-1}\sum_i 2^{-i/2}\lesssim \eps^{-1}$ balls of radius
$\eps$.
Hence the integral of the entropy is bounded by
$$\int_0^{\diamm}\sqrt{N(\eps, [1/n,1],d_n)}\,d\eps\lesssim \int_0^{\diamm}\eps^{-1/2}\,d\eps. $$
This tends to zero as $\diam_n$ tends to zero.

The second assertion of the lemma follows similarly, where we use the second parts
of Lemmas~\ref{lem: m2} and~\ref{lem: metric}. 
\end{proof}

\begin{lemma}\label{lem: metric}
Let $Y\sim N(\th,1)$. For $|\th|\lesssim \z_\t^{-1}$ and $0<\t_1<\t_2\le 1/2$,
\begin{align}
\label{def: metric}
\E_\th\left(\frac{\z_{\t_1}}{\t_1}m_{\t_1}(Y)-\frac{\z_{\t_2}}{\t_2}m_{\t_2}(Y)\right)^2
&\lesssim (\t_2-\t_1)^2 \t_1^{-3}.
\end{align}
Furthermore, for $|\th|\leq \z_\t/4$, and $\eps=1/16$ and $0<\t_1<\t_2\le 1/2$,
\begin{align*}
\E_\th\left(\frac{\z_{\t_1}}{\t_1^{\eps}}m_{\t_1}(Y)-\frac{\z_{\t_2}}{\t_2^{\eps}}m_{\t_2}(Y)\right)^2
\lesssim (\t_2	-\t_1)^2 \t_1^{-2-\eps}.
\end{align*}
\end{lemma}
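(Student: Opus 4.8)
The plan is to prove both inequalities by the same mechanism, reducing each to a pointwise-in-$\t$ bound on the $L^2(\PP_\th)$-norm of the $\t$-derivative of the map $\t\mapsto (\z_\t/\t)m_\t$ (respectively $\t\mapsto(\z_\t/\t^\eps)m_\t$). Writing the difference as $\frac{\z_{\t_1}}{\t_1}m_{\t_1}(y)-\frac{\z_{\t_2}}{\t_2}m_{\t_2}(y)=-\int_{\t_1}^{\t_2}\partial_\t\!\big(\frac{\z_\t}{\t}m_\t(y)\big)\,d\t$ and applying Minkowski's integral inequality in $L^2(\PP_\th)$ gives
\[
\Big\|\tfrac{\z_{\t_1}}{\t_1}m_{\t_1}-\tfrac{\z_{\t_2}}{\t_2}m_{\t_2}\Big\|_{L^2(\PP_\th)}\le \int_{\t_1}^{\t_2}\Big\|\partial_\t\big(\tfrac{\z_\t}{\t}m_\t\big)\Big\|_{L^2(\PP_\th)}\,d\t.
\]
Thus it suffices to establish the derivative bound $\big\|\partial_\t(\z_\t m_\t/\t)\big\|_{L^2(\PP_\th)}\lesssim \t^{-3/2}$ as $\t\to0$: integrating $\int_{\t_1}^{\t_2}\t^{-3/2}\,d\t=2(\t_1^{-1/2}-\t_2^{-1/2})\le (\t_2-\t_1)\t_1^{-3/2}$ and squaring yields exactly $(\t_2-\t_1)^2\t_1^{-3}$.

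For the derivative bound I would use the product rule, $\partial_\t(\z_\t m_\t/\t)=\partial_\t(\z_\t/\t)\,m_\t+(\z_\t/\t)\,\partial_\t m_\t$, and treat the two terms separately. Since $\z_\t'=-1/(\t\z_\t)$ we have $|\partial_\t(\z_\t/\t)|=(1+\z_\t^2)/(\t^2\z_\t)\lesssim \z_\t/\t^2$, so the first term is controlled by the $L^2$-moment bound on $m_\t$ supplied by Lemma~\ref{lem: m2}, namely $\E_\th m_\t^2\lesssim \t/\z_\t^2$ (for $|\th|\lesssim\z_\t^{-1}$), which gives $\big\|\partial_\t(\z_\t/\t)m_\t\big\|_{L^2}\lesssim (\z_\t/\t^2)(\t^{1/2}/\z_\t)=\t^{-3/2}$. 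For the second term the crucial observation is the pointwise estimate, obtained directly from $\partial_\t I_k(y)=-\int_0^1 z^k\,\frac{2\t(1-z)}{N(z)^2}\,e^{y^2z/2}\,dz$ together with $\sup_{z\in[0,1]}\frac{2\t(1-z)}{N(z)}=2/\t$ (attained at $z=0$, where $N(z)=\t^2$), that $|\partial_\t I_k(y)|\le (2/\t)I_k(y)$ for every $y$ and every $k$; the same bound holds for the nonnegative combination $I_{1/2}-I_{3/2}$. Feeding these into the quotient $m_\t=\big(y^2(I_{1/2}-I_{3/2})-I_{1/2}\big)/I_{-1/2}$ yields $|\partial_\t m_\t(y)|\le (4/\t)\,\tilde m_\t(y)$, where $\tilde m_\t(y)=\big(y^2(I_{1/2}-I_{3/2})+I_{1/2}\big)/I_{-1/2}$ is the ``cancellation-free'' majorant of $m_\t$.

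It then remains to show that $\tilde m_\t$ obeys the same second-moment bound as $m_\t$, i.e. $\E_\th\tilde m_\t^2\lesssim \t/\z_\t^2$; granting this, $\big\|(\z_\t/\t)\partial_\t m_\t\big\|_{L^2}\lesssim (\z_\t/\t^2)(\t^{1/2}/\z_\t)=\t^{-3/2}$, completing the derivative bound. This last step is the main obstacle: because $m_\t=P-Q$ with $P=y^2(I_{1/2}-I_{3/2})/I_{-1/2}$ and $Q=I_{1/2}/I_{-1/2}$ involves a genuine cancellation (precisely what makes $\E_\th m_\t$ only of order $\t/\z_\t$), one must check that the second moment of the \emph{sum} $P+Q$ is still of order $\t/\z_\t^2$. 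I expect this to follow by rerunning the proof of Lemma~\ref{lem: m2} verbatim: that argument bounds $P$ and $Q$ through the individual estimates on $I_{-1/2},I_{1/2},I_{3/2}$ from Lemmas~\ref{LemmaI-1/2} and~\ref{LemmaI1/23/2} and the three-range split $|y|\le\z_\t$, $\z_\t\le|y|\le\k_\t$, $|y|\ge\k_\t$ used in Proposition~\ref{prop:Em(Y)}, none of which uses the sign, so the bound transfers to $P+Q$ unchanged. The second assertion is proved identically: replacing $\t$ by $\t^\eps$ turns $\partial_\t(\z_\t/\t^\eps)$ into a quantity of order $\z_\t/\t^{1+\eps}$, one invokes the regime-2 moment bound $\E_\th m_\t^2\lesssim \t^{\eps}/\z_\t^2$ (valid for $|\th|\le\z_\t/4$) from Lemma~\ref{lem: m2}, arrives at $\|\partial_\t(\z_\t m_\t/\t^\eps)\|_{L^2}\lesssim \t^{-1-\eps/2}$, and integrates to $(\t_2-\t_1)\t_1^{-1-\eps/2}$, whose square is the claimed $(\t_2-\t_1)^2\t_1^{-2-\eps}$.
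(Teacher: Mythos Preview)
Your argument is correct and shares the paper's overall skeleton---write the difference as an integral of the $\t$-derivative, apply the product rule to $\t\mapsto (\z_\t/\t)\,m_\t$, and feed in Lemma~\ref{lem: m2}---but your treatment of $\dot m_\t$ is genuinely different. The paper introduces the auxiliary integrals $J_k(y)=\int_0^1 z^k N(z)^{-2}e^{y^2z/2}\,dz$, uses the identity $\dot I_k=2\t(J_{k+1}-J_k)$, and then exploits the two bounds $J_k\le I_{k-1}/(1-\t^2)$ and $J_k\le I_k/\t^2$ to arrive at the inequality $\E_\th\dot m_\t^2\lesssim \t^2\bigl[1+\E_\th Y^4+\t^{-4}\E_\th m_\t^2\bigr]$, which closes via Lemma~\ref{lem: m2} applied to $m_\t$ itself. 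Your route avoids the $J_k$ altogether through the clean pointwise bound $|\dot I_k|\le (2/\t)I_k$ (and its analogue for $I_{1/2}-I_{3/2}$), yielding $|\dot m_\t|\le (4/\t)\tilde m_\t$; the price is that you must re-verify Lemma~\ref{lem: m2} for the sign-free majorant $\tilde m_\t$. That verification does go through: the proof of Lemma~\ref{lem: m2} only uses the uniform bound $|m_\t|\lesssim 1$ and the pointwise estimate $|m_\t|\lesssim \t e^{y^2/2}(y^{-2}\wedge 1)$ from Lemma~\ref{Lem: asymp_m}(i) and (vii), and both hold for $\tilde m_\t$ (the first since $y^2(I_{1/2}-I_{3/2})/I_{-1/2}=m_\t+I_{1/2}/I_{-1/2}\le C_u+1$, the second from Lemmas~\ref{LemmaI-1/2} and~\ref{LemmaI1/23/2}). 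The paper's approach is more self-contained, reducing everything to $m_\t$; yours is more elementary but needs the extra check that no cancellation was used in Lemma~\ref{lem: m2}. The use of Minkowski instead of the Cauchy--Schwarz bound of Lemma~\ref{Lem: Diff} is immaterial.
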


\begin{proof}
In view of Lemma~\ref{Lem: Diff} the left side of \eqref{def: metric} is bounded above by,
for $\dot m_\t$ denoting the partial derivative of $m_\t$ with respect to $\t$,
\begin{align*}
&(\t_1-\t_2)^2 
 \sup_{\t\in[\t_1,\t_2]}\E_\th\Big(\frac{\z_{\t}}{\t}\dot m_{\t}(Y)-\frac{\z_{\t}+\z_{\t}^{-1}}{\t^2}m_{\t}(Y)\Big)^2\\
&\qquad\leq (\t_1-\t_2)^2 \Big[2 \sup_{\t\in[\t_1,\t_2]}\E_\th\Big(\frac{\z_{\t}}{\t}\dot m_{\t}(Y)\Big)^2
+ 2 \sup_{\t\in[\t_1,\t_2]}\E_\th\Big(\frac{\z_{\t}+\z_{\t}^{-1}}{\t^2}m_{\t}(Y)\Big)^2\Big].
\end{align*}
By Lemma~\ref{lem: m2} the second expected value on the right hand side 
 is bounded from above by a multiple of $\sup_{\t\in[\t_1,\t_2]} \t^{-3}\lesssim \t_1^{-3}$. 

To handle the first expected value, we note that the partial derivative of
$I_k$ with respect to $\t$ is given by $\dot I_k=2\t( J_{k+1}-J_{k})$, for
\begin{align}
J_k(y)=\int_0^1 \frac{z^k}{(\t^2+(1-\t^2)z)^2}e^{y^2z/2}dz.\label{def: J}
\end{align}
Therefore, by \eqref{def: m},
\begin{align*}
\dot m_{\t}(y)&=(y^2-1) \frac{\dot I_{1/2}}{I_{-1/2}}(y)-y^2 \frac{\dot I_{3/2}}{I_{-1/2}}(y)
-\frac{\dot I_{1/2}}{I_{-1/2}}(y)m_\t(y)\\
&= 2\t\Big[(y^2-1)\frac{J_{3/2}-J_{1/2}}{I_{-1/2}}(y)-y^2 \frac{J_{5/2}-J_{3/2}}{I_{-1/2}}(y)
-\frac{J_{1/2}-J_{-1/2}}{I_{-1/2}}(y)m_{\t}(y)\Big].
\end{align*}
Since $J_{k}\leq I_{k-1}/(1-\t^2)$ and $J_k\le I_k/\t^2$,
and $k\mapsto I_k$ and $k\mapsto J_k$ are decreasing and nonnegative, we have that
\begin{align}
0&\leq\frac{J_{3/2}-J_{5/2}}{I_{-1/2}}\le \frac{J_{1/2}-J_{3/2}}{I_{-1/2}}\le \frac{J_{1/2}}{I_{-1/2}}\leq 4,\nonumber\\
0&\leq\frac{J_{-1/2}-J_{1/2}}{I_{-1/2}}\leq \frac{J_{-1/2}}{I_{-1/2}}\leq \frac1{\t^{2}}.\label{eq: help_metric}
\end{align}
By combining the preceding two displays we conclude
\begin{align}
\E_\th\dot m_{\t}^2(Y)&\lesssim \t^2\Big[1+\E_\th Y^4 + \frac{1}{\t^4}\E_\th m_{\t}^2(Y) \Big].
\end{align}
Here $\E_\th Y^4$ is bounded and $\E_\th m_{\t}^2(Y)$ is bounded above by $\t\z_\t^{-2}$ by 
Lemma~\ref{lem: m2}. It follows that $(\z_\t/\t)^2\E_\th\dot m_{\t}^2(Y)$ is bounded by a multiple
of $\t^{-3}\le \t_1^{-3}$.

For the proof of the second assertion of the lemma, when $|\th|\leq \z_\t/4$,
we argue similarly, but now must bound, 
\begin{align*}
(\t_1-\t_2)^2 \Big[2 \sup_{\t\in[\t_1,\t_2]}\E_\th\Big(\frac{\z_{\t}}{\t^\eps}\dot m_{\t}(Y)\Big)^2
+ 2 \sup_{\t\in[\t_1,\t_2]}\E_\th\Big(\frac{\eps\z_{\t}+\z_{\t}^{-1}}{\t^{1+\eps}}m_{\t}(Y)\Big)^2\Big].
\end{align*}
The same arguments as before apply, now using the second bound from Lemma~\ref{lem: m2}.
\end{proof}

\begin{lemma}\label{lem: m2}
Let $Y\sim N(\th,1)$. Then,  as $\t\rightarrow 0$,
$$\E_\th m_\t^2(Y)=
\begin{cases}
o(\t \z_\t^{-2}),&|\th|\lesssim \z_\t^{-1},\\
o (\t^{1/16} \z_{\t}^{-2}),&|\th|\leq \z_\t/4.
\end{cases}$$
\end{lemma}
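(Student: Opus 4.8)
The plan is to mimic the region decomposition used for the first moment in Proposition~\ref{prop:Em(Y)}, but now applied to the squared integrand $m_\t^2(y)\phi(y-\th)$. Writing $\E_\th m_\t^2(Y)=\int_{-\infty}^\infty m_\t^2(y)\phi(y-\th)\,dy$ and using that $m_\t$ is even in $y$, I may assume $\th\ge0$ and split the line at $|y|=\k_\t$, where $\k_\t$ solves $e^{\k_\t^2/2}=\k_\t^2/(2\t)$, so that $\k_\t\sim\z_\t$. The two ingredients are the uniform bound $|m_\t(y)|\le C_u$ from Lemma~\ref{lem: E_nonzero_m(Y)}(i), valid for all $y$, and the sharp bound $|m_\t(y)|\lesssim\t(1\wedge y^{-2})e^{y^2/2}$ for $|y|\le\k_\t$ coming from Lemma~\ref{Lem: asymp_m} (the same estimate used in Proposition~\ref{prop:Em(Y)}), which reflects that $m_\t(y)\approx\frac{\t}{\pi}[y^2(I_{1/2}-I_{3/2})-I_{1/2}](y)$ once $I_{-1/2}(y)\sim\pi/\t$ by Lemmas~\ref{LemmaI-1/2} and~\ref{LemmaI1/23/2}.

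On the inner range $|y|\le\k_\t$ I would bound $m_\t^2(y)\lesssim\t^2(1\wedge y^{-4})e^{y^2}$, so that
\[
\int_{|y|\le\k_\t}m_\t^2(y)\phi(y-\th)\,dy\lesssim\t^2\int_{0}^{\k_\t}(1\wedge y^{-4})\,e^{y^2/2+y\th-\th^2/2}\,dy.
\]
The exponent $y^2/2+y\th-\th^2/2$ is increasing on the range, so a Laplace-type estimate at the upper endpoint $y=\k_\t$ (the mass concentrates in a window of width $\sim\k_\t^{-1}$) together with $e^{\k_\t^2/2}=\k_\t^2/(2\t)$ yields a bound of order $\t\,\k_\t^{-3}e^{\k_\t\th-\th^2/2}$. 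On the outer range $|y|>\k_\t$ I would use $m_\t^2\le C_u^2$ and the Gaussian tail $\int_{\k_\t}^\infty\phi(y-\th)\,dy\lesssim\k_\t^{-1}e^{-(\k_\t-\th)^2/2}$, which, again using $e^{-\k_\t^2/2}=2\t/\k_\t^2$, is of the same order $\t\,\k_\t^{-3}e^{\k_\t\th-\th^2/2}$. Hence overall $\E_\th m_\t^2(Y)\lesssim\t\,\z_\t^{-3}\,e^{\k_\t\th-\th^2/2}$.

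It then remains to insert the two ranges of $\th$. For $|\th|\lesssim\z_\t^{-1}$ the exponential factor is $e^{O(1)}=O(1)$, giving $\E_\th m_\t^2(Y)\lesssim\t\z_\t^{-3}=o(\t\z_\t^{-2})$. For $|\th|\le\z_\t/4$ the quantity $\k_\t\th-\th^2/2$ is maximised at $\th=\z_\t/4$, where, using $\k_\t\sim\z_\t$ and $\z_\t^2=2\log(1/\t)$, it is at most $\tfrac{7}{32}\z_\t^2(1+o(1))$, so that $e^{\k_\t\th-\th^2/2}\lesssim\t^{-7/16+o(1)}$ and $\E_\th m_\t^2(Y)\lesssim\t^{9/16+o(1)}\z_\t^{-3}=o(\t^{1/16}\z_\t^{-2})$; the stated exponent $1/16$ is thus a convenient, non-tight bound comfortably implied by the estimate.

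I expect the main obstacle to be the inner-range estimate. Because $m_\t^2(y)$ grows like $\t^2e^{y^2}$, i.e.\ twice as fast as in the first-moment analysis, the integrand $m_\t^2(y)\phi(y-\th)$ no longer decays but \emph{peaks} at the upper cut-off, and it is essential that the split occurs exactly at $\k_\t$---the point where $m_\t$ stops growing and saturates at the $O(1)$ level governed by $C_u$. Correctly tracking this saturation, and the shifted-Gaussian factor $e^{\k_\t\th-\th^2/2}$ in the moderate-signal case, is what makes the squared bound more delicate than the first-moment bound; every other step reduces to the $I_k$ estimates already invoked in the proof of Proposition~\ref{prop:Em(Y)}.
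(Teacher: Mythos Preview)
Your argument is correct. The paper's own proof is very close but uses a \emph{three}-region split $|y|\le\z_\t$, $\z_\t\le|y|\le\k_\t$, $|y|\ge\k_\t$ rather than your two-region split at $\k_\t$. On the middle region the paper exploits boundedness via $m_\t^2\le C_u|m_\t|\lesssim \t y^{-2}e^{y^2/2}$ (from Lemma~\ref{Lem: asymp_m}(i),(vi)), so that the $e^{y^2/2}$ cancels exactly against the Gaussian and leaves the elementary bound $\int_{\z_\t}^{\k_\t}\t y^{-2}e^{y\th-\th^2/2}\,dy\lesssim\t(\k_\t-\z_\t)\z_\t^{-2}$; on the innermost region $|y|\le\z_\t$ the paper uses the same squared bound $m_\t^2\lesssim\t^2(y^{-4}\wedge1)e^{y^2}$ that you use. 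Your two-region version trades the $m_\t^2\lesssim|m_\t|$ trick for an endpoint (Laplace) estimate at $y=\k_\t$, which is justified for instance by the substitution $u=y^2/2$ and Lemma~\ref{LemmaIncompleteGamma}. Both routes rely only on Lemma~\ref{Lem: asymp_m}(i),(vi),(vii) and produce bounds of the same final order $\t\z_\t^{-3}e^{\k_\t\th-\th^2/2}$; the paper's split avoids the endpoint computation, while yours avoids introducing the extra threshold $\z_\t$.
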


\begin{proof}
By Lemma~\ref{Lem: asymp_m} (i), (vi) and (vii) we have, if $|\th|\z_\t\lesssim 1$,
\begin{align*}
\int_{|y|\geq \k_\t} m_\t^2(y)\phi(y-\th)\,dy&\lesssim \int_{|z|\geq\k_\t-\th}^\infty \phi(z)\,dz\lesssim e^{-(\k_\t-\th)^2/2}(\k_\t-\th)^{-1}
\lesssim \t\z_\t^{-3},\\
\int_{\z_\t\leq |y|\leq \k_\t} m_\t^2(y)\phi(y-\th)\,dy&\lesssim \int_{\z_\t}^{\k_\t}\t y^{-2}e^{y^2/2-(y-\th)^2/2}\,dy=\t(\k_\t-\z_\t)\z_\t^{-2},\\
\int_{|y| \leq \z_\t}  m_\t^2(y)\phi(y-\th)\,dy&\lesssim 
\t^2 \int_0^{\z_\t}(y^{-4}\wedge 1)e^{y^2/2}e^{\th\z_\t-\th^2/2}\,dy\lesssim \t \z_\t^{-4}.
\end{align*}
All three expressions on the right are $o(\t\z_\t^{-2})$.

The second assertion of the lemma follows by the same inequalities, together with 
the inequalities $e^{-(\k_\t-\th)^2/2}\le \t^{-9/32}$ and $e^{|\th|2\z_\t-\th^2/2}\le \t^{-15/16}$, if $|\th|\le \z_\t/4$. 
\end{proof}

\begin{lemma}\label{LemmaNotNormed}
If the cardinality of $I_0:=\{i: \th_{0,i}=0\}$ tends to infinity, then
\begin{align*}
\sup_{1/n\leq \t\leq 1}\frac{1}{|I_0|}\Big|\sum_{i\in I_0}m_\t(Y_i)
-\sum_{i\in I_0}\E_{\th_0} m_\t(Y_i)\Big|\stackrel{P_{\th_0}}{\rightarrow}0.
\end{align*}
\end{lemma}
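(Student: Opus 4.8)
The plan is to treat $\t\mapsto G_n(\t):=|I_0|^{-1}\sum_{i\in I_0}\bigl(m_\t(Y_i)-\E_{\th_0}m_\t(Y_i)\bigr)$ as an empirical process indexed by $\t\in[1/n,1]$ and to bound $\E_{\th_0}\sup_{\t}|G_n(\t)|$ by the chaining maximal inequality (Corollary 2.2.5 of \cite{vdVW}, applied with $\psi(x)=x^2$), exactly as in the proof of Lemma~\ref{lem: uniform0}. Since $i\in I_0$ means $\th_{0,i}=0$, the variables $Y_i$, $i\in I_0$, are i.i.d.\ $N(0,1)$, and the increments of $G_n$ are centred i.i.d.\ averages, so the relevant intrinsic semimetric is $d_n^2(\t_1,\t_2)=|I_0|^{-1}\Var_0\bigl(m_{\t_1}(Y)-m_{\t_2}(Y)\bigr)$. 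It then suffices to show that $\Var_0 G_n(\t)\to0$ at a single point (and that $\E_0|G_n(\t_0)|\to0$) and that the entropy integral $\int_0^{\diamm}\sqrt{N(\eps,[1/n,1],d_n)}\,d\eps$ is $o(1)$; convergence in probability follows by Markov's inequality.

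For the variance at a point I would use that $m_\t$ is uniformly bounded by the constant $C_u$ of Lemma~\ref{lem: E_nonzero_m(Y)}(i), which gives $\Var_0 G_n(\t)\le \E_0 m_\t^2(Y)/|I_0|\le C_u^2/|I_0|\to0$ and likewise $\E_0|G_n(\t_0)|\le C_u/|I_0|^{1/2}$. For the semimetric I would write $m_{\t_1}(Y)-m_{\t_2}(Y)=\int_{\t_1}^{\t_2}\dot m_\t(Y)\,d\t$ and apply Cauchy--Schwarz to obtain $\E_0\bigl(m_{\t_1}(Y)-m_{\t_2}(Y)\bigr)^2\le(\t_2-\t_1)^2\sup_{\t\in[\t_1,\t_2]}\E_0\dot m_\t^2(Y)$, so that I need a bound of the form $\E_0\dot m_\t^2(Y)\lesssim \t^{-1}$ on $[1/n,1]$. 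This is precisely the computation already carried out in the proof of Lemma~\ref{lem: metric}, specialised to $\th=0$: there $\E_\th\dot m_\t^2(Y)\lesssim \t^2[1+\E_\th Y^4+\t^{-4}\E_\th m_\t^2(Y)]$, and for $\th=0$ Lemma~\ref{lem: m2} gives $\E_0 m_\t^2(Y)=o(\t\z_\t^{-2})$, whence $\E_0\dot m_\t^2(Y)=o(\t^{-1}\z_\t^{-2})+O(\t^2)\lesssim \t^{-1}$. This yields $d_n(\t_1,\t_2)\lesssim |I_0|^{-1/2}(\t_2-\t_1)\t_1^{-1/2}$.

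The entropy is then handled as in Lemma~\ref{lem: uniform0}: I would cover $[1/n,1]$ by the dyadic blocks $[2^i/n,2^{i+1}/n]$, $i=0,\dots,\log_2 n$, on which $d_n(\t_1,\t_2)\lesssim |I_0|^{-1/2}(n/2^i)^{1/2}|\t_1-\t_2|$, so that the $i$th block is covered by a multiple of $\eps^{-1}|I_0|^{-1/2}(2^i/n)^{1/2}$ balls of radius $\eps$. Summing the geometric series $\sum_i 2^{i/2}\asymp n^{1/2}$ gives $N(\eps,[1/n,1],d_n)\lesssim \eps^{-1}|I_0|^{-1/2}$ and $\diamm\lesssim |I_0|^{-1/2}$, whence $\int_0^{\diamm}\sqrt{N}\,d\eps\lesssim |I_0|^{-1/4}\diamm^{1/2}\lesssim |I_0|^{-1/2}\to0$ as soon as $|I_0|\to\infty$. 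The two powers of $n$ cancel, so no lower bound on $|I_0|$ beyond divergence is needed. Combining the pieces, $\E_{\th_0}\sup_{\t}|G_n(\t)|\lesssim |I_0|^{-1/2}\to0$.

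The main obstacle is the derivative bound $\E_0\dot m_\t^2(Y)\lesssim \t^{-1}$ uniformly over the whole range $[1/n,1]$. The small-$\t$ asymptotics of Lemmas~\ref{lem: m2} and~\ref{lem: metric} deliver it as $\t\da0$, but those expansions are not valid near $\t=1$; there, however, the integrals $I_k,J_k$ defining $m_\t$ and $\dot m_\t$ are manifestly bounded (the factor $(\t^2+(1-\t^2)z)^{-1}$ causes no singularity), so $m_\t$ and $\dot m_\t$ have bounded second moments under $N(0,1)$ and the inequality $\E_0\dot m_\t^2(Y)\lesssim1\le\t^{-1}$ holds trivially on any interval bounded away from $0$. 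Thus the delicate analysis is confined to small $\t$ and merely reuses the earlier computations, while the complementary range $\t\asymp1$ is routine.
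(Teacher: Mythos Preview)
Your proof is correct and follows essentially the same approach as the paper: the paper also invokes the uniform boundedness of $m_\t$ from Lemma~\ref{lem: E_nonzero_m(Y)}(i) for the pointwise variance, cites the proof of Lemma~\ref{lem: metric} for the increment bound $\E_0(m_{\t_1}-m_{\t_2})^2\lesssim|\t_1-\t_2|^2/\t_1$, and then refers to the entropy argument of Lemma~\ref{lem: uniform0}. Your explicit treatment of the range $\t\asymp 1$ (where the small-$\t$ expansions of Lemmas~\ref{lem: m2} and~\ref{lem: metric} are unavailable but boundedness of $\dot m_\t$ is elementary) fills in a detail the paper leaves implicit.
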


\begin{proof}
By Lemma~\ref{Lemma: tech1}(i) we have that $\E_0m_\t^2(Y_i)\lesssim 1$ uniformly in $\t$ and
by the proof of Lemma~\ref{lem: metric} $\E_0(m_{\t_1}-m_{\t_2})^2(Y_i)\lesssim |\t_1-\t_2|^2/\t_1$,
uniformly in $0<\t_1<\t_2\le 1$. The first shows that the marginal variances of the
process $G_n(\t):=|I_0|^{-1}\sum_{i\in I_0}m_\t(Y_i)$ tend to zero as $|I_0|\ra \infty$. The second
allows to control the entropy integral of the process and complete the proof, in the same way 
as the proof of Lemma~\ref{lem: uniform0}.
\end{proof}

\begin{lemma}
\label{lem: E_nonzero_m(Y)}
\label{Lemma: tech1}
\label{Lem: asymp_m}
The function $y\mapsto m_\t(y)$ is symmetric about 0 and nondecreasing on $[0,\infty)$ with 
\begin{itemize}
\item[(i)] $-1\le  m_\t(y)\le C_u$, for all $y\in\RR$ and all $\t\in[0,1]$, and some $C_u<\infty$.
\item[(ii)] $m_\t(0)=-(2\t/\pi)(1+o(1))$, as $\t\ra0$.
\item[(iii)] $m_\t(\z_\t)=2/(\pi\z_\t^2)(1+o(1))$, as $\t\ra0$.
\item[(iv)] $m_\t(\k_\t)=1/(\pi+1)/(1+o(1))$, as $\t\ra0$.
\item[(v)] $\sup_{y\ge A\z_\t}|m_\t(y)-1|=O(\z_\t^{-2})$, as $\t\ra0$, for every $A>1$.
\item[(vi)] $m_\t(y)\sim\t e^{y^2/2}/(\pi y^2/2+\t e^{y^2/2})$, as $\t\ra0$, uniformly in $|y|\ge1/\e_\t$, for any $\e_\t\da0$.
\item[(vii)] $|m_\t(y)|\lesssim\t e^{y^2/2}(y^{-2}\wedge 1)$, as $\t\ra0$, for every $y$.
\end{itemize}
\end{lemma}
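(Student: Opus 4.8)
The plan is to handle the two structural claims---symmetry and monotonicity---first, and then to read off the seven quantitative estimates from a single master asymptotic, which will be the bulk of the work. Symmetry is immediate, since each $I_k(y)$ in \eqref{eq:def.Ik} depends on $y$ only through $y^2$, so $m_\t$ in \eqref{def: m} is even. For monotonicity on $[0,\infty)$ I would exploit two identities already available. From the proof of Lemma~\ref{lem: diff_like} one has $m_\t(y)=1+\t\,\partial_\t\log I_{-1/2}(y)$, while \eqref{EqPosteriorCumulants} gives $\partial_y\log I_{-1/2}(y)=\hat\th_i(\t)$, the posterior mean. Since $I_{-1/2}$ is jointly smooth in $(\t,y)$ the mixed partials commute, so $\partial_y m_\t(y)=\t\,\partial_\t\hat\th_i(\t)$, and it suffices to show the posterior mean is nondecreasing in $\t$ for $y\ge0$. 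Writing $\hat\th_i(\t)=y\,\E_{\mu_y}[z]$ for the probability measure $\mu_y$ on $(0,1)$ with unnormalized density $f_\t(z)=z^{-1/2}(\t^2+(1-\t^2)z)^{-1}e^{y^2z/2}$, differentiation under the integral gives $\partial_\t\E_{\mu_y}[z]=\Cov_{\mu_y}\bigl(z,\partial_\t\log f_\t(z)\bigr)$; as $\partial_\t\log f_\t(z)=-2\t(1-z)/(\t^2+(1-\t^2)z)$ is increasing in $z$, this is a covariance of two monotone functions, hence nonnegative. Thus $\partial_y m_\t(y)\ge0$ on $[0,\infty)$.

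The crucial point is that (iii)--(vii) are all specializations of the single uniform asymptotic (vi), $m_\t(y)\sim \t e^{y^2/2}/(\pi y^2/2+\t e^{y^2/2})$. I would establish (vi) by inserting the sharp asymptotics for $I_{-1/2},I_{1/2},I_{3/2}$ from Lemmas~\ref{LemmaI-1/2} and~\ref{LemmaI1/23/2} into \eqref{def: m}, splitting each integral into a ``noise'' part near $z=0$, which contributes a term of order $\pi/\t$ to $I_{-1/2}$, and a ``signal'' part near $z=1$, which contributes a term of order $y^{-2}e^{y^2/2}$; the numerator $y^2(I_{1/2}-I_{3/2})-I_{1/2}$ is governed by the signal part and is of order $e^{y^2/2}$, and dividing produces (vi). Granting (vi), the rest follow by specialization: at $y=\z_\t$ one has $\t e^{\z_\t^2/2}=1$, giving (iii); at $y=\k_\t$ the defining relation $\t e^{\k_\t^2/2}=\k_\t^2/2$ balances the two terms of the denominator and yields $1/(\pi+1)$, which is (iv); for $|y|\ge A\z_\t$ with $A>1$ the signal term dominates, so $m_\t\to1$ with the stated $O(\z_\t^{-2})$ error, giving (v); and bounding the numerator by the denominator gives the crude two-sided bound (vii).

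For the remaining endpoint statements, (ii) follows by evaluating at $y=0$, where $m_\t(0)=-I_{1/2}(0)/I_{-1/2}(0)$; the substitution $z=\t^2u$ shows $I_{-1/2}(0)\sim\pi/\t$ while $I_{1/2}(0)\to2$, so $m_\t(0)=-(2\t/\pi)(1+o(1))$. For (i), monotonicity reduces the two-sided bound to the endpoints of the range of $y$: the minimum is $m_\t(0)=-\E_{\mu_0}[z]\ge-1$ because $z\in(0,1)$, and the supremum over $y$ equals $\lim_{y\to\infty}m_\t(y)$, which by (vi)---together with joint continuity in $(\t,y)$ to cover $\t$ bounded away from $0$---is finite and uniformly bounded over $\t\in[0,1]$, furnishing the constant $C_u$.

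The main obstacle is the uniform asymptotic (vi): one must control the competition between the noise contribution, of fixed order $\pi/\t$, and the signal contribution, which grows like $e^{y^2/2}$, \emph{uniformly} across the transition window $y\asymp\z_\t$ up through $y\asymp\k_\t$, which is precisely the regime where neither term dominates. This requires the sharp---not merely order-of-magnitude---expansions of the three integrals $I_k$, and careful bookkeeping of the lower-order terms in the numerator $y^2(I_{1/2}-I_{3/2})-I_{1/2}$, where a leading-order cancellation occurs and the true size of $m_\t$ is determined only at the next order.
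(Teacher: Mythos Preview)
Your approach is essentially the paper's: reduce (ii)--(vii) to the expansions of $I_{-1/2}$, $I_{1/2}$, $I_{3/2}$ furnished by Lemmas~\ref{LemmaI-1/2} and~\ref{LemmaI1/23/2}, and handle monotonicity via the representation $m_\t=1+\t\,\dot I_{-1/2}/I_{-1/2}$. Your monotonicity argument (commute mixed partials, then a covariance inequality) is a legitimate variant of the paper's direct MLR argument on the densities $g_y$; both rest on the same increasing function $z\mapsto (z-1)/(\t^2+(1-\t^2)z)$ under the same tilted measure.

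There is one genuine gap. In (i), your argument for the uniform upper bound $C_u$ appeals to ``joint continuity in $(\t,y)$ to cover $\t$ bounded away from $0$''. Joint continuity on $[\delta,1]\times\RR$ does not by itself bound $\sup_y m_\t(y)$, since $y$ is unbounded; what you need is that $\limsup_{y\to\infty}m_\t(y)$ is finite \emph{uniformly} in $\t\in[\delta,1]$, and (vi) is silent here because it is an asymptotic as $\t\to0$. The paper closes this with a short direct computation: for $\t\ge\delta$ the denominator $\t^2+(1-\t^2)z$ is bounded away from $0$ and $\infty$ on $[0,1]$, so $m_\t(y)\le \delta^{-2}\,y^2\int_0^1\sqrt z(1-z)e^{y^2z/2}\,dz\big/\int_0^1 z^{-1/2}e^{y^2z/2}\,dz$, and the incomplete-gamma expansion of Lemma~\ref{LemmaIncompleteGamma} shows this ratio is bounded as $y\to\infty$.

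A smaller point: (vii) must hold for \emph{all} $y$, including bounded $y$, where (vi) (valid only for $|y|\ge 1/\e_\t$) does not apply. For that range you need the first (all-$y$) assertions of Lemmas~\ref{LemmaI-1/2} and~\ref{LemmaI1/23/2}, which give $I_{-1/2}(y)\gtrsim\pi/\t$ and $y^2(I_{1/2}-I_{3/2})(y),\,I_{1/2}(y)\lesssim(1\wedge y^{-2})e^{y^2/2}$ directly; (vi) alone does not suffice.
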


\begin{proof}
As seen in the proof of Lemma~\ref{lem: diff_like} the function $m_\t$ can be written
$$m_\t(y)=1+\t\frac{\dot I_{-1/2}}{I_{-1/2}}(y)=1+2\t^2\int_0^1 \frac{z-1}{\t^2+(1-\t^2)z}g_y(z)\,dz,$$
for $z\mapsto g_y(z)$ the probability density function on $[0,1]$ with $g_y(z)\propto e^{y^2/2}z^{-1/2}/(\t^2+(1-\t^2)z)$.
If $y$ increases, then the probability distribution increases stochastically, and hence so does the expectation of
the increasing function $z\mapsto (z-1)/(\t^2+(1-\t^2)z)$. (More precisely, note that $g_{y_2}/g_{y_1}$ is increasing if $y_2>y_1$ and
apply Lemma~\ref{Lem: DominatingDensity}.)

(i). The inequality $m_\t(y)\ge -1$ is immediate from the definition of \eqref{def: m} of $m_\t$
and the fact that $I_{3/2}\le I_{1/2}\le I_{-1/2}$. For the upper bound it suffices to show
that both $\sup_ym_\t(y)$ remains bounded as $\t\ra 0$ and that $\sup_y\sup_{\t\ge\d} m_\t(y)<\infty$
for every $\d>0$. 

The first follows from the monotonicity and  (v). 

For the proof of the second we note that if $\t\ge\d>0$, then
$\d^2\le \t^2+(1-\t^2)z\le 1$, for every $z\in [0,1]$, so that  the denominators in the
integrands of $I_{-1/2}, I_{1/2}, I_{3/2}$ are uniformly bounded away from zero and infinity and hence
$$m_\t(y) \le y^2\frac{I_{1/2}(y)-I_{3/2}(y)}{I_{-1/2}(y)}
\le\frac1{\d^2}\frac{y^2\int_0^1 \sqrt z(1-z)e^{y^2z/2}\,dz}{\int_0^1  z^{-1/2}e^{y^2z/2}\,dz}.$$
After changing variables $zy^2/2=v$, the numerator and denominator take the forms of
the integrals in the second and first assertions of Lemma~\ref{LemmaIncompleteGamma}, except that
the range of integration is $(0,y^2/2)$ rather than $(1,y)$. In view of the lemma the 
quotient approaches 1 as $y\ra\infty$.  For $y$ in a bounded interval  the leading factor $y^2$ is bounded,
while the integral in the numerator is smaller than the integral in the denominator, as $z(1-z)\le z\le z^{-1/2}$,
for $z\in [0,1]$.

Assertions (ii)-(v) are consequences of the representation \eqref{def: m}, 
Lemmas~\ref{LemmaI-1/2} and~\ref{LemmaI1/23/2} and
the fact that $I_{1/2}(0)=\int_0^1z^{-1/2}dz \big(1+O(\t^2)\big)\ra 2$.

Assertions (vi) and (vii) are immediate from Lemmas~\ref{LemmaI-1/2} and~\ref{LemmaI1/23/2}.
\end{proof}

\subsection{Technical lemmas}

\begin{lemma}
\label{LemmaIncompleteGamma}
For any $k$, as $y\ra\infty$,
$$\int_1^y u^ke^u\,du=y^ke^y\bigl(1-k/y+O(1/y^2)\bigr).$$
Consequently, as $y\ra\infty$,
$$\int_1^y{u^{k}}{e^u}\,du-\frac 1y\int_1^y u^{k+1} e^u\,du=y^{k-1}e^y\bigl(1+O(1/y)\bigr).$$
\end{lemma}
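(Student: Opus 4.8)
The plan is to obtain the expansion by integration by parts, using only a crude a priori bound on an integral of the same type to control the remainder. Write $F_m(y)=\int_1^y u^m e^u\,du$. Integrating by parts with $e^u\,du=d(e^u)$ gives the recursion
\begin{equation*}
F_m(y)=y^m e^y-e-m\,F_{m-1}(y),
\end{equation*}
valid for every real $m$ (there is no boundary issue at $u=1$, since the lower limit is bounded away from $0$). Applying this twice, first with exponent $k$ and then with exponent $k-1$, yields
\begin{equation*}
F_k(y)=y^k e^y-k\,y^{k-1}e^y-e+ke+k(k-1)\,F_{k-2}(y).
\end{equation*}

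The one ingredient still needed is an a priori bound $F_m(y)=O(y^m e^y)$ for arbitrary real $m$. I would prove this by splitting $F_m(y)=\int_1^{y/2}+\int_{y/2}^y$: on $[1,y/2]$ the integrand is at most a fixed power of $y$ times $e^{y/2}$, so that part is exponentially negligible compared with $y^m e^y$; on $[y/2,y]$ one has $u^m\le C_m\,y^m$ (with $C_m=1$ if $m\ge0$ and $C_m=2^{-m}$ if $m<0$) and $\int_{y/2}^y e^u\,du\le e^y$, giving a bound $C_m\,y^m e^y$ on that part. (Alternatively this follows in one line from L'Hospital's rule applied to the quotient $F_m(y)/(y^m e^y)$, whose limit is $\lim_{y\to\infty}(1+m/y)^{-1}=1$.) Since $e^y$ dominates any power of $y$, the constant terms $-e+ke=O(1)$ are themselves $O(y^{k-2}e^y)$, and together with $k(k-1)F_{k-2}(y)=O(y^{k-2}e^y)$ they combine into the claimed remainder, proving $F_k(y)=y^k e^y\bigl(1-k/y+O(1/y^2)\bigr)$.

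For the ``consequently'' statement I would simply apply the first part with exponents $k$ and $k+1$. Dividing the expansion of $F_{k+1}(y)$ by $y$ gives $y^{-1}F_{k+1}(y)=y^k e^y\bigl(1-(k+1)/y+O(1/y^2)\bigr)$, and subtracting this from $F_k(y)=y^k e^y\bigl(1-k/y+O(1/y^2)\bigr)$ leaves $y^k e^y\bigl(1/y+O(1/y^2)\bigr)=y^{k-1}e^y\bigl(1+O(1/y)\bigr)$, as required. No step is genuinely hard here; the only point requiring a little care is that the a priori bound must be available for negative exponents as well, so that it covers $F_{k-2}$ even when $k<2$ (and in particular for the values $k\in\{-1/2,1/2,3/2\}$ arising in the applications), which is why I keep the constant $C_m$ explicit in the split above.
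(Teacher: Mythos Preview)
Your proof is correct and follows essentially the same route as the paper: two integrations by parts yielding $y^k e^y - ky^{k-1}e^y + O(1) + k(k-1)F_{k-2}(y)$, then the same split at $y/2$ to bound $F_{k-2}(y)=O(y^{k-2}e^y)$, and the consequence by applying the first part with exponents $k$ and $k+1$.
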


\begin{proof}
By integrating by parts twice, the first integral is seen to be equal to 
$$y^ke^y-e-ky^{k-1}e^y+ke +R,$$
where $R$ satisfies
\begin{align*}
|R|&=|k(k-1)|\int_1^y u^{k-2}e^u\,du\\
&\le |k(k-1)|\int_1^{y/2}(1\vee (y/2)^{k-2})e^u\,du+|k(k-1)|\int_{y/2}^y ((y/2)^{k-2}\vee y^{k-2})e^u\,du\\
&\lesssim |k(k-1)|\Bigl[(1\vee y^{k-2})e^{y/2}+y^{k-2}e^y\Bigr].
\end{align*}
The second assertion follows by applying the first one twice.
\end{proof}

\begin{lemma}
\label{LemmaI-1/2}
There exist functions $R_\t$ with $\sup_{y}|R_\t(y)|=O(\sqrt{\t})$ as $\t\downarrow0$,  such that
\begin{align*}
I_{-1/2}(y)&=\Bigl(\frac{\pi}\t+\sqrt{y^2/2}\int_1^{y^2/2} \frac1{v^{3/2}}e^v\,dv\Bigr)\bigl(1+R_\t(y)\bigr).
\end{align*}
Furthermore, given $\e_\t\ra 0$ there exist functions $S_\t$ with $\sup_{y\ge 1/\e_\t}|S_\t(y)|=O(\sqrt \t+\e_\t^2)$, 
such that, as $\t\da0$, 
\begin{align*}
I_{-1/2}(y)&=\Bigl(\frac{\pi}\t+\frac{e^{y^2/2}}{y^2/2}\Bigr)\bigl(1+S_\t(y)\bigr).
\end{align*}
\end{lemma}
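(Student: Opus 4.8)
The plan is to collapse the entire computation into a single incomplete-gamma integral by the substitution $v=y^2z/2$, which simultaneously exposes both terms of the asserted expansion. Writing $b=y^2\tau^2/(2(1-\tau^2))$ and rewriting the denominator $\tau^2+(1-\tau^2)z$ as $\tfrac{2(1-\tau^2)}{y^2}(b+v)$, the Jacobian bookkeeping turns the defining integral into
\[
I_{-1/2}(y)=\frac{y}{\sqrt2\,(1-\tau^2)}\int_0^{y^2/2}\frac{v^{-1/2}}{b+v}\,e^v\,dv .
\]
I would then split the $v$-integral at $v=1$ (when $y^2/2\le1$ only the low piece is present and the argument below simplifies): the piece over $(0,1)$ produces the $\pi/\tau$ term, and the piece over $(1,y^2/2)$ produces the incomplete-gamma term $\sqrt{y^2/2}\int_1^{y^2/2}v^{-3/2}e^v\,dv$.

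For the low piece, on $(0,1)$ one has $1\le e^v\le e$, so $\int_0^1\frac{v^{-1/2}}{b+v}e^v\,dv=\int_0^1\frac{v^{-1/2}}{b+v}\,dv+O(1)$, and the latter integral equals $\pi b^{-1/2}$ minus the tail $\int_1^\infty\frac{v^{-1/2}}{b+v}\,dv=O(1)$, using the exact evaluation $\int_0^\infty\frac{v^{-1/2}}{b+v}\,dv=\pi b^{-1/2}$. Since $\tfrac{y}{\sqrt2}\pi b^{-1/2}=\pi\sqrt{1-\tau^2}/\tau$, multiplying by the prefactor shows the low piece contributes $(\pi/\tau)(1+O(\tau^2))+O(y)$. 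For the high piece, on $v\ge1$ I would write $\frac{v^{-1/2}}{b+v}=v^{-3/2}\bigl(1-\tfrac{b}{b+v}\bigr)$ with $0\le\tfrac{b}{b+v}\le b/v$; the correction integrates against $v^{-5/2}e^v$, which by Lemma~\ref{LemmaIncompleteGamma} is a factor $O(2/y^2)$ smaller than $\int_1^{y^2/2}v^{-3/2}e^v\,dv$, so that (using $b\lesssim y^2\tau^2$) the high piece equals $\sqrt{y^2/2}\int_1^{y^2/2}v^{-3/2}e^v\,dv\,\bigl(1+O(\tau^2)\bigr)$.

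The main obstacle is to convert these two \emph{additive} estimates into a single \emph{uniform relative} bound, i.e.\ to absorb the $O(y)$ and $O(\tau^2)\times(\text{integral})$ errors into a factor $1+R_\tau(y)$ with $\sup_y|R_\tau(y)|=O(\sqrt\tau)$. The key structural fact is that the leading quantity $P(y):=\pi/\tau+\sqrt{y^2/2}\int_1^{y^2/2}v^{-3/2}e^v\,dv$ is bounded below by a multiple of $1/\tau$ for every $y$: the integral term stays bounded below (it tends to $-2$ as $y\to0$), so $P(y)\ge\pi/(2\tau)$ once $\tau$ is small. Against this lower bound the error $O(y)$ has relative size $O(\tau y)$, which for $y\le\zeta_\tau$ is $O(\tau\sqrt{\log(1/\tau)})=o(\sqrt\tau)$; for $y\ge\zeta_\tau$ the integral term is of order $e^{y^2/2}/(y^2/2)$ and grows superexponentially, so both the $O(y)$ and the $O(\tau^2)\times(\text{integral})$ errors are again $o(\sqrt\tau)$ relative to $P(y)$. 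Patching the two regimes at $y\asymp\zeta_\tau$ is the delicate bookkeeping, and it is precisely here that the generous bound $O(\sqrt\tau)$ lets one avoid tracking sharp constants.

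Finally, the second statement follows from the first together with Lemma~\ref{LemmaIncompleteGamma} applied with $k=-3/2$ and argument $y^2/2\to\infty$: for $y\ge1/\e_\tau$ it gives $\sqrt{y^2/2}\int_1^{y^2/2}v^{-3/2}e^v\,dv=\frac{e^{y^2/2}}{y^2/2}\bigl(1+O(1/y^2)\bigr)$, and $1/y^2\le\e_\tau^2$. Replacing the integral term by $e^{y^2/2}/(y^2/2)$ in $P(y)$ changes it by a relative amount $O(\e_\tau^2)$, which combines with the $O(\sqrt\tau)$ from the first part to give $\sup_{y\ge1/\e_\tau}|S_\tau(y)|=O(\sqrt\tau+\e_\tau^2)$.
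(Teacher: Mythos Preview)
Your argument is correct and is organized differently from the paper's. The paper splits into the two regimes $|y|\le 2\zeta_\tau$ and $|y|>2\zeta_\tau$ and treats them by different decompositions of the $z$-integral (three pieces $(0,\tau)$, $(\tau,(2/y^2)\wedge1)$, $((2/y^2)\wedge1,1)$ with separate substitutions in the first regime; the pieces $(0,1/3)$ and $(1/3,1)$ in the second). You instead make the single substitution $v=y^2z/2$ at the outset, which produces the clean representation $\frac{y}{\sqrt2(1-\tau^2)}\int_0^{y^2/2}\frac{v^{-1/2}}{b+v}e^v\,dv$ and lets the two claimed terms emerge from the single split at $v=1$, with the $\pi/\tau$ coming from the exact evaluation $\int_0^\infty\frac{v^{-1/2}}{b+v}\,dv=\pi b^{-1/2}$. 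This is more unified and avoids the case split in $y$ until the very end, where you only use it to bound the \emph{relative} error; the paper's approach trades that economy for more explicit piecewise bookkeeping. Both routes ultimately rest on the same two ingredients: the identity giving $\pi$ and Lemma~\ref{LemmaIncompleteGamma}.

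One point that deserves a sentence more than ``the argument below simplifies'': for $y^2/2\le1$ the low piece is over $(0,y^2/2)$, and the tail you must subtract is $\int_{y^2/2}^\infty\frac{v^{-1/2}}{b+v}\,dv$, which is \emph{not} $O(1)$ uniformly as $y\to0$. The fix is immediate once you observe that the ratio of the upper limit to $b$ is $y^2/(2b)=(1-\tau^2)/\tau^2$, independent of $y$; substituting $v=bu$ gives $\int_0^{y^2/2}\frac{v^{-1/2}}{b+v}\,dv=b^{-1/2}\int_0^{(1-\tau^2)/\tau^2}\frac{u^{-1/2}}{1+u}\,du=b^{-1/2}(\pi+O(\tau))$, and after the prefactor this is $\pi/\tau+O(1)$, matching $P(y)$ up to $O(\tau)$ since the integral term in $P(y)$ is bounded on this range. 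With that line added, your uniform relative bound $O(\sqrt\tau)$ goes through exactly as you describe.
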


\begin{proof}
For the proof of the first assertion we separately consider the ranges $|y|\le 2\z_\t$ and $|y|>2\z_\t$.
For $|y|\le 2\z_\t$ we split the integral in the definition of $I_{-1/2}$ over the 
intervals $(0,\t)$, $(\t,(2/y^2)\wedge 1)$ and $((2/y^2)\wedge 1,1)$,
where we consider the third interval  empty if $y^2/2\le 1$. Making the changes of coordinates
$z=u\t^2$ in the first integral, and $(y^2/2)z=v$ in the second and third integrals, we see that
\begin{align*}
I_{-1/2}(y)&=\frac1\t\int_0^{1/\t}\frac1{\sqrt u}\frac 1{1+(1-\t^2)u} e^{y^2\t^2u/2}\,du\\
&\qquad+\sqrt{y^2/2}\Bigl[\int_{y^2\t/2}^{y^2/2\wedge 1}+ \int_{y^2/2\wedge 1}^{y^2/2}\Bigr]
\frac1{\sqrt v}\frac 1{\t^2y^2/2+(1-\t^2)v}e^v\,dv
\end{align*}
For $|y|\le 2\z_\t$,  the exponential in the first integral tends to 1, uniformly in $u\le 1/\t$. Since 
$e^u-1\le ue^u$, for $u\ge 0$, replacing it by 1 gives an error of at most
$$\frac 1\t \int_0^{1/\t}\frac1 {\sqrt u}\frac{e^{y^2\t{/2}}y^2\t^2u}{1+(1-\t^2)u}\,du\lesssim \frac1\t y^2\t^{3/2}.$$
As $(1-\t^2)(1+u)\le 1+(1-\t^2)u\le 1+u$, dropping the factor $1-\t^2$ from the denominator
makes a multiplicative error of order $1+O(\t^2)$. 
Since $\int_0^\infty u^{-1/2}/(1+u)\,du=\pi$ and $\int_{1/\t}^\infty u^{-1/2}/(1+u)\,du\lesssim \t^{1/2}$,
the first term gives a contribution of $\pi/\t+O(\t^{-1/2})$, uniformly in $|y|\le2 \z_\t$.
In the second integral we bound the factor $\t^2y^2/2+(1-\t^2)v$ below by $(1-\t^2)v$, the exponential $e^v$ above by 
$e$ and the upper limit of the integral by 1, and next evaluate the integral to be bounded by a constant times $\t^{-1/2}$.
For the third integral we separately consider the cases that $y^2/2\leq1$ and $y^2/2>1$.
In the first case the third integral contributes nothing; the second term (the integral) in the
assertion of the lemma is bounded and hence also contributes a negligible amount relative to $\pi/\t$.
Finally consider the case that $y^2/2>1$. If in  the third integral
we replace $\t^2y^2/2+(1-\t^2)v$ by $v$, we obtain the second term in the assertion of the lemma.
The difference is bounded above by
$$\sqrt{y^2/2} \int_{ 1}^{y^2/2}
\frac1{\sqrt v}\frac {\t^2v+\t^2y^2}{v(\t^2y^2/2+(1-\t^2)v)}e^v\,dv
\lesssim \t^2\sqrt{y^2/2}\int_1^{y^2/2} (v^{-3/2}+y^2 v^{-5/2})e^v\,dv.$$
This is negligible relative to the integral in the assertion.
This concludes the proof of the first assertion of the lemma for the range $|y|\le 2\z_\t$.

For $|y|$ in the interval $(2\z_\t,\infty)$ we split the integral in the definition of
$I_{-1/2}$ into the ranges $[0,1/3]$ and $(1/3,1]$. The contribution of the first range is bounded above by
$$\frac1{\t^2}e^{y^2/6}\int_0^{1/3} z^{-1/2}\,dz\ll {\sqrt\t}\frac{e^{y^2/2}}{y^2/2},$$
for $|y|\ge 2\z_\t$. This is negligible relative to the integral in the assertion, which expands as
$e^{y^2/2}/\sqrt{y^2/2}$, as claimed by the second assertion of the lemma.
In the contribution of the second range we use that $z\le \t^2+(1-\t^2)z\le (1+2\t^2)z$,
for $z\ge 1/3$, and see that this is up to a multiplicative term of order $1+O(\t^2)$ equal to
$$\int_{1/3}^1 z^{-3/2} e^{y^2z/2}\,dz=\sqrt{y^2/2}\Bigl[\int_1^{y^2/2}-\int_1^{y^2/6}\bigr] v^{-3/2}e^v\,dv.$$
Applying Lemma~\ref{LemmaIncompleteGamma}, we see that the contribution of the second integral 
is bounded above by a multiple of $(y^2/2)^{-1}e^{y^2/6}$, which is negligible relative to the first. 

To prove the second assertion of the lemma we expand the integral in the first assertion 
with the help of Lemma~\ref{LemmaIncompleteGamma}. 
\end{proof}

\begin{lemma}
\label{LemmaI1/23/2}
For $k>0$, there exist functions $R_{\t,k}$ with $\sup_{y}|R_{\t,k}(y)|=O(\t^{2k/(k+1)})$,
and for given $\e_\t\ra 0$ functions $S_{\t,k}$ with $\sup_{y\ge 1/\e_\t}|S_{\t,k}(y)|=O(\t^{2k/(2k+1)}+\e_\t^2)$, such that,
as $\t\da0$,
\begin{align*}
I_{k}(y)&=\frac1{(y^2/2)^k}\int_0^{y^2/2}v^{k-1}e^v\,dv\bigl(1+R_{\t,k}(y)\bigr)\lesssim\bigl(1\wedge y^{-2}\bigr)e^{y^2/2},\\
I_{k}(y)&=\frac{e^{y^2/2}}{y^2/2}\bigl(1+S_{\t,k}(y)\bigr).
\end{align*}
There also exist functions $\bar R_{\t}$ with $\sup_{y}|\bar R_{\t}(y)|=O(\t^{1/2})$ and  
$\bar S_{\t}$ with $\sup_{y\ge 1/\e_\t}|\bar S_{\t}(y)|=O(\sqrt\t+\e_\t^2)$, such that, as $\t\da0$ and $\e_\t\ra0$,
\begin{align*}
I_{1/2}(y)-I_{3/2}(y)&=\frac1{\sqrt{y^2/2}}\int_0^{y^2/2}\frac{1-2v/y^2}{\sqrt v}e^v\,dv\bigl(1+\bar R_{\t}(y)\bigr)
\lesssim (1\wedge y^{-4})e^{y^2/2},\\
I_{1/2}(y)-I_{3/2}(y)&=\frac{e^{y^2/2}}{(y^2/2)^2}\bigl(1+\bar S_{\t}(y)\bigr).
\end{align*}
\end{lemma}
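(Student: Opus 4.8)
The plan is to treat each $I_k$ (for $k>0$), and the difference $I_{1/2}-I_{3/2}$, by comparing the integral against the version in which the denominator $\t^2+(1-\t^2)z$ is replaced by its dominant factor $z$. Write $\tilde I_k(y)=\int_0^1 z^{k-1}e^{y^2z/2}\,dz$, which converges precisely because $k>0$ (this is why $I_{-1/2}$ was handled separately). The substitution $v=y^2z/2$ gives at once $\tilde I_k(y)=(y^2/2)^{-k}\int_0^{y^2/2}v^{k-1}e^v\,dv$, exactly the leading term claimed; the analogous computation with $z^{1/2}(1-z)$ in place of $z^k$ yields the asserted form $(y^2/2)^{-1/2}\int_0^{y^2/2}v^{-1/2}(1-2v/y^2)e^v\,dv$ for $I_{1/2}-I_{3/2}$. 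The two upper bounds $\lesssim(1\wedge y^{-2})e^{y^2/2}$ and $\lesssim(1\wedge y^{-4})e^{y^2/2}$ then follow from these explicit formulas: for bounded $y$ each leading term is $O(1)$, while for large $y$ Lemma~\ref{LemmaIncompleteGamma} shows $\int_0^{y^2/2}v^{k-1}e^v\,dv\sim(y^2/2)^{k-1}e^{y^2/2}$, and its ``consequently'' part supplies the extra $y^{-2}$ coming from the $(1-2v/y^2)$ weight in the difference.

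For the relative error I would start from the identity
\[
\frac1{\t^2+(1-\t^2)z}-\frac1z=\frac{\t^2(z-1)}{z\,(\t^2+(1-\t^2)z)},
\]
which gives $I_k(y)-\tilde I_k(y)=-\t^2\int_0^1 z^{k-1}\frac{1-z}{\t^2+(1-\t^2)z}e^{y^2z/2}\,dz$, and the same computation produces an extra factor $(1-z)$, i.e.\ $(1-z)^2$ in the integrand, for the difference. I would split each error integral at a threshold $z=\d$. On $(0,\d)$ I bound $\t^2/(\t^2+(1-\t^2)z)\le1$, so the contribution is at most $\int_0^\d z^{k-1}e^{y^2z/2}\,dz$; on $(\d,1)$ I bound the same factor by $\t^2/((1-\t^2)z)$, giving at most $\frac{\t^2}{(1-\t^2)\d}\tilde I_k(y)$. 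Choosing $\d=\t^{2/(k+1)}$ balances the two pieces and yields the rate $\t^{2k/(k+1)}$; for the difference the simple choice $\d=\t$ already gives $O(\sqrt\t)$, which is all that is claimed.

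The step I expect to be the main obstacle is showing that the $(0,\d)$ contribution is a controlled fraction of $\tilde I_k(y)$ \emph{uniformly in $y$}. I would prove that $\int_0^\d z^{k-1}e^{tz}\,dz\big/\int_0^1 z^{k-1}e^{tz}\,dz$ is nonincreasing in $t=y^2/2$: differentiating in $t$ and writing $d\mu\propto z^{k-1}e^{tz}\,dz$ as a probability measure on $[0,1]$, the numerator of the derivative equals $\mu([0,\d])\,[\,\E_\mu[z\mid z\le\d]-\E_\mu[z]\,]$ up to a positive factor, which is $\le0$ because $[0,\d]$ is a lower set. Hence the ratio is maximised at $t=0$, where it equals $\d^k$, so the $(0,\d)$ part is $\le\d^k\tilde I_k(y)$ for every $y$; combined with the $(\d,1)$ bound this delivers $\sup_y|R_{\t,k}(y)|=O(\t^{2k/(k+1)})$. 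The same monotonicity idea handles $I_{1/2}-I_{3/2}$, the extra subtlety being that the weight $(1-z)$ in $\tilde I_{1/2}-\tilde I_{3/2}$ vanishes at $z=1$, precisely where $e^{y^2z/2}$ concentrates for large $y$; one checks that the $(0,\d)$ numerator is nonetheless exponentially small there, so the worst case remains $y=0$ and the rate $O(\sqrt\t)$ stands.

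Finally, the uniform statements in the regime $y\ge1/\e_\t$ follow by feeding the incomplete-gamma asymptotics of Lemma~\ref{LemmaIncompleteGamma} into the relative-error estimates. For large $y$ that lemma gives $\int_0^{y^2/2}v^{k-1}e^v\,dv=(y^2/2)^{k-1}e^{y^2/2}\bigl(1+O(\e_\t^2)\bigr)$, hence $\tilde I_k(y)=\frac{e^{y^2/2}}{y^2/2}\bigl(1+O(\e_\t^2)\bigr)$, while its second assertion produces the analogous $\frac{e^{y^2/2}}{(y^2/2)^2}\bigl(1+O(\e_\t^2)\bigr)$ for the difference. Combining these with the already-established relative bounds and using $\t^{2k/(k+1)}=O(\t^{2k/(2k+1)})$ (valid since $2k/(k+1)>2k/(2k+1)$ for $k>0$) yields the stated $S_{\t,k}$ and $\bar S_\t$.
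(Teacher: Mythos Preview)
Your approach is correct and follows the same overall strategy as the paper: replace the denominator $\t^2+(1-\t^2)z$ by $z$, split the resulting error at the threshold $\delta=\t^{2/(k+1)}$ (respectively $\delta=\t$ for the difference), and then invoke Lemma~\ref{LemmaIncompleteGamma} for the large-$y$ assertions and the upper bounds.

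The one substantive difference is how you obtain the uniform-in-$y$ control of the $(0,\delta)$ piece. The paper argues more elementarily: it bounds the $(0,\t^a)$ contribution of $I_k$ above by a multiple of $e^{\t^a y^2/2}\,\t^{ak}$ (replacing $e^{y^2z/2}$ by its supremum on $[0,\t^a]$ and $\t^2+(1-\t^2)z$ by $(1-\t^2)z$), and bounds $\int_{\t^a}^1 z^{k-1}e^{y^2z/2}\,dz$ below by a constant times $e^{\t^a y^2/2}$ (replacing $e^{y^2z/2}$ by its infimum there). The common factor $e^{\t^a y^2/2}$ cancels, giving the relative bound $O(\t^{ak})$ with no monotonicity argument needed. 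Your MLR argument is a valid and conceptually clean alternative, though slightly heavier than necessary.

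One small point: for the difference $I_{1/2}-I_{3/2}$ your closing sentence (``one checks that the $(0,\delta)$ numerator is exponentially small\ldots'') is vaguer than it needs to be. The error integrand carries a factor $(1-z)^2\le(1-z)$, so after this trivial bound your monotonicity argument applies verbatim with the weight $z^{-1/2}(1-z)$ in both numerator and denominator, and the ratio is again maximised at $y=0$, where it is $O(\sqrt\delta)$. No separate large-$y$ check is required.
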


\begin{proof}
We split the integral in the definition of $I_k$ over the intervals $[0,\t^a]$ and $[\t^a,1]$, for $a=2/(k+1)$.
The contribution of the first integral is bounded above by
$$e^{\t^a y^2/2}\int_0^{\t^a} \frac{z^k}{(1-\t^2)z}\,dz\lesssim e^{\t^a y^2/2}\t^{ka}.$$
In the second integral we use that $z\le \t^2+(1-\t^2)z\le (\t^{2-a}+1-\t^2)z$, for $z\ge\t^a$, to
see that the integral is $1+O(\t^{2-a})$ times
$$\int_{\t^a}^1 \frac{z^k}{z}e^{y^2z/2}\,dz\gtrsim  e^{\t^a y^2/2}.$$
Combining these displays, we see that 
$$I_{k}(y)=\int_{\t^a}^1 z^{k-1}e^{y^2z/2}\,dz(1+O(\t^{2-a})+O(\t^{ka})).$$
This remains valid if we enlarge the range of integration to $[0,1]$.
The  change of coordinates $zy^2/2=v$ completes the proof of the equality in the first assertion.

\beginskip
For $y\ge 1$ the inequality in the first assertion follows for $k\ge 1$ from the bound $\int_0^y v^{k-1}e^v\,dv\le y^{k-1}\int_0^y e^v\,dv$
and for $0<k\le 1$ from the bound $\int_0^y v^{k-1}e^v\,dv\le e^{y/2}\int_0^{y/2} v^{k-1}\,dv+(y/2)^{k-1}\int_{y/2}^ye^v\,dv
\lesssim y^{k-1}e^y$, since $\sup_{y>0} (ye^{-y/2})<\infty$. For $y\le 1$, the bound follows
from $\int_0^y v^{k-1}e^v\,dv\le e^y \int_0^y v^{k-1}\,dv\lesssim e^y y^k$.
\endskip

For the second assertion we expand the integral in the first assertion with the
help of the second assertion of Lemma~\ref{LemmaIncompleteGamma}. Note
here that for $k>-1$ the integrals in the latter lemma can be taken over
$(0,y)$ instead of $(1,y)$, since the difference is a constant. 

The inequality in the first assertion is valid for $y\ra\infty$, in view of the second assertion, and from the fact that
$G(y):= (y^2/2)^{-k}\int_0^{y^2/2} v^{k-1}e^v\,dv$ possesses a finite limit as $y\da0$ it follows that 
it is also valid for $y\ra0$. For intermediate $y$ the inequality follows since the continuous function 
$y\mapsto G(y)e^{-y^2/2}/(y^{-2}\wedge 1)$ is bounded on compacta in $(0,\infty)$.

For the proofs of the assertions concerning $I_{1/2}-I_{3/2}$ we write
$$I_{1/2}(y)-I_{3/2}(y)
=\Bigl(\int_0^{\t}+\int_{\t}^1\Bigr) \frac{\sqrt z(1-z)}{\t^2+(1-\t^2)z}e^{y^2z/2}\,dz.$$
Next we follow the same approach as previously.
\end{proof}

\begin{lemma}\label{Lem: Diff}
For any stochastic process $(V_{\t}: \t>0)$ with continuously differentiable sample paths $\t\mapsto V_\t$, 
with derivative written as $\dot V_\t$,
\begin{align*}
\E( V_{\t_2}-V_{\t_1})^2\leq (\t_2-\t_1)^2\sup_{\t\in[\t_1,\t_2]}\E \dot V_{\t}^2.
\end{align*}
\end{lemma}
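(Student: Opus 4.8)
The plan is to represent the increment $V_{\t_2}-V_{\t_1}$ as the integral of its derivative and then apply the Cauchy--Schwarz inequality pathwise before taking expectations. Since by hypothesis the sample paths $\t\mapsto V_\t$ are continuously differentiable, the fundamental theorem of calculus applies along each path and gives
\[
V_{\t_2}-V_{\t_1}=\int_{\t_1}^{\t_2}\dot V_\t\,\dd\t .
\]

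First I would square this identity and bound the right-hand side by Cauchy--Schwarz, viewing $\dot V_\t$ as the product of $\dot V_\t$ with the constant function $1$ on $[\t_1,\t_2]$, to obtain the pathwise inequality
\[
(V_{\t_2}-V_{\t_1})^2=\Bigl(\int_{\t_1}^{\t_2}\dot V_\t\,\dd\t\Bigr)^2\le (\t_2-\t_1)\int_{\t_1}^{\t_2}\dot V_\t^2\,\dd\t .
\]
Next I would take expectations on both sides and interchange the expectation with the deterministic integral over $[\t_1,\t_2]$, yielding
\[
\E(V_{\t_2}-V_{\t_1})^2\le (\t_2-\t_1)\int_{\t_1}^{\t_2}\E\dot V_\t^2\,\dd\t .
\]
Finally, bounding $\E\dot V_\t^2$ by its supremum over $[\t_1,\t_2]$ and integrating the resulting constant over an interval of length $\t_2-\t_1$ produces the second factor $\t_2-\t_1$, giving $(\t_2-\t_1)^2\sup_{\t\in[\t_1,\t_2]}\E\dot V_\t^2$, as claimed.

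The only delicate step, and hence the main obstacle, is justifying the interchange of expectation and integration. I would handle this by invoking Tonelli's theorem rather than Fubini's: since the integrand $\dot V_\t^2$ is nonnegative, the interchange is valid with no separate integrability hypothesis, and the inequality holds even when the supremum on the right is infinite (in which case it is vacuous). The joint measurability of $(\omega,\t)\mapsto \dot V_\t(\omega)$ needed to apply Tonelli follows from the assumed continuity of the derivative sample paths, which makes the map the pointwise limit of measurable difference quotients. Everything else is elementary, so I expect the proof to be very short.
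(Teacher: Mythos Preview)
Your proof is correct and follows essentially the same route as the paper: write the increment via the fundamental theorem of calculus, apply Cauchy--Schwarz pathwise, interchange expectation and integration, and bound by the supremum. The only difference is cosmetic---you invoke Tonelli (noting nonnegativity and measurability) where the paper simply cites Fubini, which is arguably a small improvement in rigor.
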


\begin{proof}
By the Newton-Leibniz formula, the Cauchy-Schwarz inequality, 
Fubini's theorem and the mean integrated value theorem, for $\t_2\geq\t_1$,
\begin{align*} 
\E\big(V_{\t_1}-V_{\t_2}\big)^2&= \E \big(\int_{\t_1}^{\t_2}\dot V_{\t}\,d\t\big)^2\leq \E (\t_2-\t_1)\int_{\t_1}^{\t_2}\dot V_{\t}^2\,d\t\\
&= (\t_2-\t_1)\int_{\t_1}^{\t_2} \E \dot V_{\t}\, d\t\leq (\t_2-\t_1)^2\sup_{\t\in[\t_1,\t_2]}\E \dot V_{\t}^2\,d\t.
\end{align*}
\end{proof}

\begin{lemma}\label{Lem: DominatingDensity}
If $f_1,f_2: [0,\infty)\ra[0,\infty)$ are probability densities such that  $f_2/f_1$ is monotonely increasing,
then, for any monotonely increasing function $h$,
$$\E_{f_1} h(X)\leq \E_{f_2} h(X).$$ 
\end{lemma}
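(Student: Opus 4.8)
The plan is to read the hypothesis as the monotone likelihood ratio condition and to derive the conclusion from a single-crossing argument; this is exactly the assertion that the law with density $f_2$ stochastically dominates the law with density $f_1$, together with the standard fact that increasing functions respect stochastic order. I would first record the structural consequence of the monotone ratio. Writing $r=f_2/f_1$ on $\{f_1>0\}$, the assumption says $r$ is nondecreasing; since both are probability densities, $\int_0^\infty(f_2-f_1)\,dx=0$, so $r$ can be neither everywhere above nor everywhere below $1$ unless $f_1=f_2$ almost everywhere (in which case the claimed inequality is a trivial equality). Consequently there is a threshold $x_0\in[0,\infty)$ with $f_2\le f_1$ on $\{x<x_0\}$ and $f_2\ge f_1$ on $\{x>x_0\}$.

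With the crossing point in hand I would conclude by a direct sign computation. Using $\int_0^\infty(f_2-f_1)\,dx=0$ to insert the constant $-h(x_0)$ at no cost,
\[
\E_{f_2}h(X)-\E_{f_1}h(X)=\int_0^\infty\bigl(h(x)-h(x_0)\bigr)\bigl(f_2(x)-f_1(x)\bigr)\,dx.
\]
For $x<x_0$ both factors are nonpositive (the first because $h$ is increasing, the second by the single-crossing property), while for $x>x_0$ both are nonnegative; hence the integrand is everywhere nonnegative and the integral is $\ge0$, which is the assertion. An equivalent route is to establish first-order stochastic dominance $F_2\le F_1$ pointwise for the distribution functions --- immediate from the same threshold $x_0$, by integrating $f_1-f_2$ over $[0,x]$ when $x\le x_0$ and over $[x,\infty)$ when $x>x_0$ --- and then invoke the usual equivalence with $\E_{f_2}h\ge\E_{f_1}h$ for increasing $h$.

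The only step requiring care is the single-crossing claim when $f_1$ vanishes on part of the domain, so that $r$ is undefined there; restricting attention to the support of $f_1+f_2$ handles this, and in the application to Lemma~\ref{Lem: asymp_m} the relevant densities $g_y$ are strictly positive on $[0,1]$, so the issue does not arise. I would also read the statement under the convention that the expectations exist in $(-\infty,+\infty]$, which legitimizes subtracting $h(x_0)$ and rearranging the integral. No further obstacle is expected, since the argument is entirely elementary once the monotone ratio has been converted into a single crossing point.
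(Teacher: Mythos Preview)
Your argument is correct and is essentially the same as the paper's: both locate a crossing point $x_0$ from the monotone likelihood ratio and the constraint $\int(f_2-f_1)=0$, then subtract the constant $h(x_0)$ to make the integrand of $\E_{f_2}h-\E_{f_1}h$ pointwise nonnegative. The paper's write-up is slightly terser (it writes $g=f_2/f_1$ and bounds $0=h(x_0)\int f_1(g-1)$ from above by $\int f_1 h(g-1)$ piecewise), and it does not pause over the case where $f_1$ may vanish, but the substance is identical.
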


\begin{proof}
Define $g=f_2/f_1$. Since $\int_0^{\infty}f_1(x)dx=\int_0^{\infty}f_1(x)g(x)\,dx$ and $g$ is monotonely increasing,
there exists an $x_0>0$ such that $g(x)\leq 1$ for $x< x_0$ and $g(x)\geq 1$ for $x> x_0$. Therefore
\begin{align*}
0&=h(x_0)\int_{0}^{\infty}f_1(x)\big(g(x)-1\big)\,dx\\
&\leq \int_{0}^{x_0}f_1(x)h(x)\big(g(x)-1\big)\,dx+\int_{x_0}^{\infty}f_1(x) h(x)\big(g(x)-1\big)\,dx.
\end{align*}
By the definition of $g$ the right side is $\E_{f_2} h(X)-\E_{f_1} h(X)$.
\end{proof}

\bibliographystyle{acm}
\bibliography{references}
\end{document}